\newcommand{\TheTitle}{Composite Optimization by Nonconvex Majorization-Minimization} 
\newcommand{\intdom}[1]{\operatorname{int\, dom} #1}
\newcommand{\dom}[1]{\operatorname{dom} #1}
\title{{\TheTitle}}
\author{
	Jonas Geiping\thanks{University of Siegen, Hölderlinstraße 3, 57076 Siegen, Germany
		(\href{jonas.geiping@uni-siegen.de}{jonas.geiping@uni-siegen.de}, \href{michael.moeller@uni-siegen.de}{michael.moeller@uni-siegen.de})} 
	\and
	Michael Moeller\footnotemark[2] 
}
\newcommand{\R}{\mathbb{R}}
\newcommand{\N}{\mathbb{N}}
\DeclareMathOperator*{\argmin}{\arg \min}%
\newtheorem{theorem}{Theorem}
\newtheorem{lemma}{Lemma}
\newtheorem{corollary}{Corollary}
\theoremstyle{definition}
\newtheorem{definition}{Definition}[section]
\newtheorem{remark}{Remark}
\newtheorem{example}{Example}
\newcommand{\importantBox}[1]{%
	\begin{center}%
		\setlength{\fboxrule}{2pt}%
		\fbox{%
			\begin{minipage}{0.95\textwidth}%
				#1
			\end{minipage}%
		}%
	\end{center}%
}
\begin{document}
	
	\maketitle
	\begin{abstract}
\noindent The minimization of a nonconvex composite function can model a variety of imaging tasks. 
A popular class of algorithms for solving such problems are majorization-minimization techniques which iteratively approximate the composite nonconvex function by a majorizing function that is easy to minimize. Most techniques, e.g. gradient descent, utilize convex majorizers in order to guarantee that the majorizer is easy to minimize. 
In our work we consider a natural class of nonconvex majorizers for these functions, and show that these majorizers are still sufficient for a globally convergent optimization scheme. 
Numerical results illustrate that by applying this scheme, one can often obtain superior local optima compared to previous majorization-minimization methods, when the nonconvex majorizers are solved to global optimality. Finally, we illustrate the behavior of our algorithm for depth super-resolution from raw time-of-flight data.
	\end{abstract}

	\bigskip
	\noindent{\bf Keywords:} Nonconvex Optimization, First-Order Optimization, Majorization-Minimization, Kurdyka \L ojasiewicz inequality, Time-of-Flight Depth Reconstruction
	
	\bigskip
	\noindent{\bf AMS Subject Classification:} 90C26, 90C06, 68U10, 32B20, 65K10, 47J06

	
	
	
\section{Introduction}

Many imaging tasks that can be regarded as the minimization of some objective function, also called energy, can be solved by nonlinear optimization. Unfortunately, many energies arising from the faithful modeling of the data formation process and a state-of-the-art regularization term are inherently nonconvex, coupled, and high dimensional. Since determining the global minimizer of such a cost function is rarely feasible, one frequently turns to (gradient-based) methods that only find a, possibly sub-optimal, critical point of the energy landscape \cite{nesterov_introductory_2004}. 

Interestingly, some high-dimensional nonconvex optimization problems do admit a global solution within reasonable time. Besides problems for which the solution can be determined analytically, the aforementioned class includes \textit{separable} problems on a bounded domain, i.e. problems for which the minimization of an energy $E$ with respect to some variable $u\in \mathbb{R}^n$ decomposes into the minimization of separate low-dimensional energies, e.g. $E(u) = \sum_{i=1}^n E_i(u_i)$. 
Even more remarkably, there are several types of non-separable nonconvex optimization problems which can be reformulated as convex problems, e.g. via convex relaxation techniques \cite{chan_algorithms_2006} or via functional lifting \cite{pock_global_2010}, and still yield a globally optimal solution to the original nonconvex problem. Unfortunately, the aforementioned techniques rely on a special structure of the objective. Even seemingly minor perturbations of the required structure make it impossible to exploit these techniques, and lead practitioners to consider local (gradient-based) methods again.

\begin{figure}
	\centering
	\subfloat[]{\label{fig:non_convex:a}
		\includegraphics[width=0.49\textwidth]{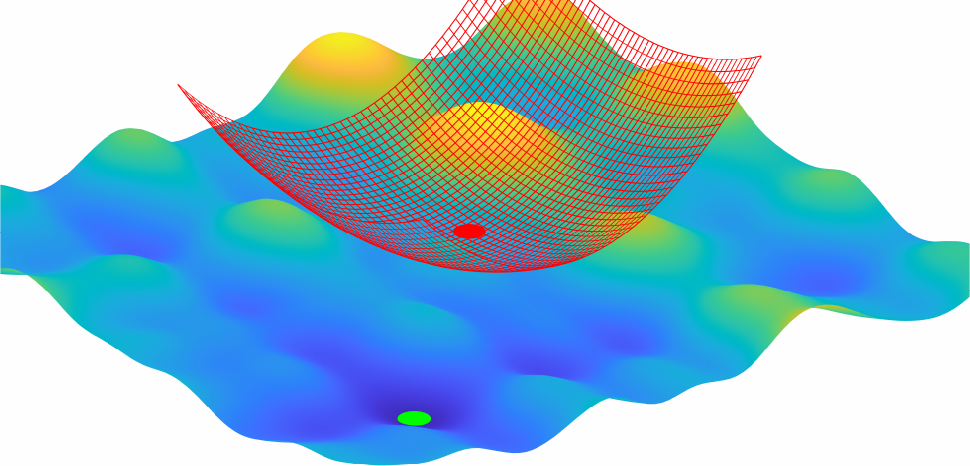}}
	\subfloat[]{\label{fig:non_convex:b}\includegraphics[width=0.49\textwidth]{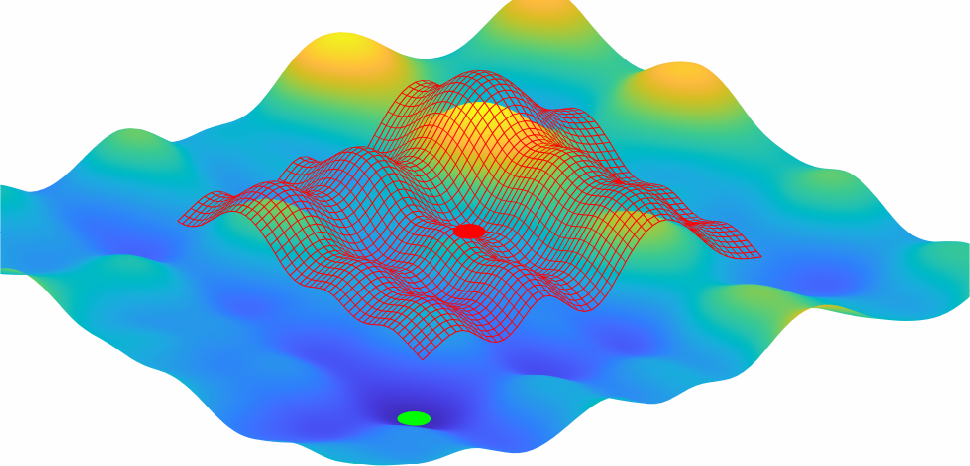}}
	\caption{Nonconvex versus convex majorization. (a) shows an energy of type \cref{eq:problem} with a convex majorizer. (b) shows the same energy, but with a solvable nonconvex majorizer. The initial point is marked in red, the global minimum of the energy in green. We can see that the shown nonconvex majorizer can better represent the given function. \label{fig:non_convex}}
\end{figure}

Interestingly, many of such local methods admit an interpretation in the framework of \textit{majorization-minimization} techniques: In each iteration, the energy $E$ is approximated by a simpler function $E_{u^k}$ which satisfies 
\begin{align*}
E_{u^k}(u) &\geq E(u), \\
E_{u^k}(u^k) &= E(u^k),
\end{align*}
for $u^k$ being the current iterate. By defining the next iterate to be the minimizer of the approximation $E_{u^k}$, 
$$u^{k+1} = \arg \min_u E_{u^k}(u), $$
one automatically obtains monotonically decreasing objective values. 

Common gradient-based methods use simple convex approximation functions $E_{u^k}$, e.g. quadratic functions,
\begin{equation}\label{eq:intro_grad}
E_{u^k}(u) = E(u^k) + \langle \nabla E(u^k),u-u^k \rangle + \frac{1}{2\tau}||u-u^k||^2,
\end{equation}
in the case of gradient descent. While this leads to easy-to-solve subproblems, such approximation functions $E_{u^k}$ are only a crude approximation of the original energy and almost all information about the shape of the original energy landscape is lost. 

In this work we propose a novel majorization-minimization technique with nonconvex functions $E_{u^k}$ with the idea to 
\begin{enumerate}
	\item approximate the original energy landscape much more faithfully, and
	\item still be able to minimize $E_{u^k}$ globally by considering functions $E_{u^k}$ that are either separable or can be minimized via relaxation techniques. 
\end{enumerate}

As illustrated in a simple two-dimensional example in \cref{fig:non_convex}, one can expect a more faithful approximation of the original energy to yield 'better' local minima: While the para-bolic approximation of \cref{fig:non_convex:a} yields a nearby local minimum, the separable nonconvex majorizer in \cref{fig:non_convex:b} allows to skip several local minima. In this example, the minimizer of the nonconvex majorizer is in a close vicinity to the global minimizer after just a single step of the algorithm. 

While our motivation comes from the (somewhat heuristic) idea of finding 'better' local minima, our convergence analysis does \textit{not} depend on the subproblems being solved to global optimality.  
For the remainder of the paper we consider the minimization of composite energies of the form
\begin{equation}\label{eq:problem}
E(u) = G(\rho(u)) +  R (u),
\end{equation}
for suitable functions $G:\R^m \to \R$, $\rho:\R^n \to \R^m$ and $R:\R^n \to \R$, via the iterative minimization of 
\begin{equation}\label{eq:aux}
E_{u^k}(u) = G(\rho(u^k)) + \langle \nabla G(\rho(u^k)),\rho(u)-\rho(u^k)\rangle + R(u) + \frac{1}{2\tau}||\rho(u)-\rho(u^k)||^2.
\end{equation}
The model function $E_{u^k}$ is a naturally global, but nonconvex, majorizer of $E$ for suitable $\tau$ as we will see later.
A typical example for 'simple' functions $\rho:\R^n \to \R^n$ and $R:\R^n \to \R$ is given when both functions are separable, i.e. $\rho(u) = (\rho_1(u_1),\dots,\rho_n(u_n))$ and $R(u) = \sum_{i=1}^n r_i(u_i)$. In this case, the nonconvex majorizer \cref{eq:aux} is then also separable and can be solved in each dimension separately.

We continue summarizing some of the related work for nonconvex and composite optimization problems and illustrate how the proposed majorization-minimization technique \cref{eq:aux} differs from the methods that have been considered in the literature so far. 

\subsection{Related Work}

The current field of nonlinear optimization is quite wide. In the following overview of related work we focus on results, that like our method do not require convexity of the objective function and we limit ourselves to generalizations of first-order methods.
The general framework of majorization-minimization methods has been reviewed widely in the literature of the recent decades, see for example, \cite{hunter_tutorial_2004,mairal_optimization_2013-1,sun_majorization-minimization_2017,wu_convergence_1983}.

The first option for tackling the minimization of \eqref{eq:problem} is to ignore the composite structure of $G\circ \rho$, naturally leading to schemes like the aforementioned gradient descent (GD) \cref{eq:intro_grad} or the closely related forward backward splitting (FBS) \cite{chen_convergence_1997,beck_fast_2009,nesterov_gradient_2013}. As we will see in more detail below, the proposed scheme recovers such algorithms in the special case of $\rho$ being the identity. The convergence\footnote{In the context of first order methods, we consider 'convergence' as implying that the sequence of iterates converges to a stationary point of the objective function.} of a general class of nonconvex first-order descent methods, including GD and FBS, was shown e.g. in \cite{attouch_convergence_2013}. It is important to note that such a convergence is nontrivial for arbitrary nonconvex functions and requires, for example, some algebraic notion of 'tameness' \cite{ioffe_invitation_2009}, that is nevertheless usually present in practice.

The most limiting assumption in these first-order methods is the Lipschitz continuity of the gradient of $F=G \circ \rho$, the first part of the objective function. This class of problems was recently extended in \cite{bauschke_descent_2017,bolte_first_2018,benning_gradient_2017} to L-smooth adaptable function \cite{}, these functions are not necessarily convex or L-smooth, only a Legendre function $h$ must exist, so that $Lh-F$ is convex for some $L>0$. The previously mentioned methods can be extended to a descent 'relative' to these Legendre functions. Defining the Bregman distance of $h$ as $D_h(u,v) = h(u)-h(v) -\langle \nabla h,u-v\rangle$, \cite{bolte_first_2018}'s majorizer can be written as 
\begin{equation}\label{eq:intro_fb}
E_{u^k}(u)  = \langle \nabla F(u^k),u-u^k \rangle + R(u) + \frac{1}{\tau}D_h(u,u^k) .
\end{equation}
They show that the sequence of iterates generated by this type of majorizer converges for appropriate $\tau$ and conditions to $h,F$ and $R$, which include the K\L-property \cite{bolte_clarke_2007} (which follows from the mentioned notion of 'tameness') and the assumption that $\dom{h} = \R^n$.

We can relate \cite{bolte_first_2018} to the earlier approach of \cite{chouzenoux_variable_2014}. Here, the functions $h$ are restricted to induced norms, however they are allowed to change during the sequence of iterations, $h^k = \frac{1}{2}||\cdot||^2_{A^k}$ where each $A^k$ is a symmetric positive definite matrix. These matrices are chosen so that \cref{eq:intro_fb} is a majorizer of $E$ at $u^k$, which is in turn guaranteed if $D_{h^k-F}(u,u^k)\geq 0$. This is a weaker assumption than $h-F$ convex, which is equivalent to $D_{h-f}(u,v) \geq 0$, but limited by the use of induced norms. \cite{chouzenoux_variable_2014} also shows global convergence under the K\L-property.

Recent works have also proposed general frameworks for iteratively replacing the original minimization problem with simple approximation functions $E_{u^k}$ beyond majorization-minimization. \cite{drusvyatskiy_nonsmooth_2016} analyzes approximation functions $E_{u^k}$, satisfying $|E_{u^k}(u)-E(u)|\leq \omega(||u-u^k||)$ for a proper growth function $\omega$. A minimization scheme of these approximation functions  exhibits subsequential convergence to critical points, even if the subproblem evaluations are inexact. These approximation functions need not necessarily be convex, but the distance of their subsequent evaluations must tend to zero.
A slightly different generalization is discussed in  \cite{ochs_non-smooth_2017}, where approximation functions constructed by $E_{u^k} = \bar{E}_{u^k}+ D_h(u,u^k)$ with $|\bar{E}_{u^k}(u)-E(u)|\leq \omega(||u-u^k||)$ are examined. Here $\omega$ is a growth function and $D_h$ a Bregman distance generated by a Legendre function, generalizing the previously discussed \cref{eq:intro_fb}.  Subsequence convergence can again be shown here, under relatively weak conditions. However the approximation function $E_{u^k}$ is taken to be convex in \cite{ochs_non-smooth_2017} to, among other properties, guarantee the success of a backtracking scheme and reach an implementable algorithm.

A review of Majorization-Minimization methods that still allow for a sequence of iterates to converge globally under the K\L-property can be found in \cite{bolte_majorization-minimization_2016}. There, majorizers $E_{v}$ are required, most prominently, to be $m$-strongly convex and to fulfill the abstract descent inequality $\operatorname{dist}(0,\partial E_{v}(u)) \leq c ||v -u ||$. This condition however, will be difficult to fulfill in our setting due to the presence of $\rho$, and we will thus seek convergence under different conditions.

Coming to related work in composite optimization we find that there are two ways to handle problems of type \cref{eq:problem}: Either we linearize the outer function $G$ in each approximation, or the inner function $\rho$. Linearizing the inner function $\rho$ leads to methods that are reminiscent of classical Levenberg-Marquardt algorithms for nonlinear least-squares problems. The approximation function can be written as
\begin{equation}\label{eq:rel_lm}
E_{u^k}(u) = G\left(\rho(u^k)+J_\rho(u^k)(u-u^k)\right) + R(u) + \frac{1}{2\tau}||u-u^k||^2 ,
\end{equation}
where $J_\rho$ denotes the Jacobian of $\rho$. A classical application for this composition are systems of nonlinear equations. Due to the inner linearization, it is in general not required that $G$ is smooth. 
Subsequence convergence follows as a result of \cite{lewis_proximal_2016-1,drusvyatskiy_nonsmooth_2016} or \cite{ochs_non-smooth_2017}. Global convergence for convex $G$ and $ R = 0$ is shown under the K\L-property in \cite{pauwels_value_2016-1}.
Further literature 
can be found under the terms 'prox-linear' or 'prox-descent', e.g. \cite{lewis_proximal_2016-1,drusvyatskiy_efficiency_2016}. 
Linearizing the outer function leads to algorithms related to iterative re-weighting procedures:
\begin{equation}\label{eq:rel_pl}
E_{u^k}(u) = G(\rho(u^k)) + \langle \nabla G(\rho(u^k)),\rho(u)-\rho(u^k)\rangle + R(u) +  \frac{1}{2\tau}||u-u^k||^2.
\end{equation}
Subsequence convergence follows from the general result of  \cite{drusvyatskiy_nonsmooth_2016} under the assumption that the distance of subsequent iterates tends to zero. Further analysis, related to special cases in iterative re-weighting can be found in \cite{ochs_iteratively_2015} or under more general assumptions, but including the convexity of $E_{u^k}$ in \cite{ochs_non-smooth_2017}. The connection to iterative reweighting is immediate for concave $G$, as then $\tau$ can be taken arbitrarily large and the proximal term vanishes.
This formulation is closely related to our work and differs from ours in the way we measure the distance to the previous iterate. We later discuss the implications of this difference.
\begin{figure}
	\centering
	\subfloat[]{\label{fig:majorizers1:a}\includegraphics[width=0.32\textwidth]{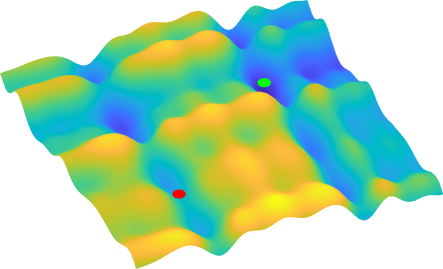}}
	\subfloat[]{\label{fig:majorizers1:b}\includegraphics[width=0.32\textwidth]{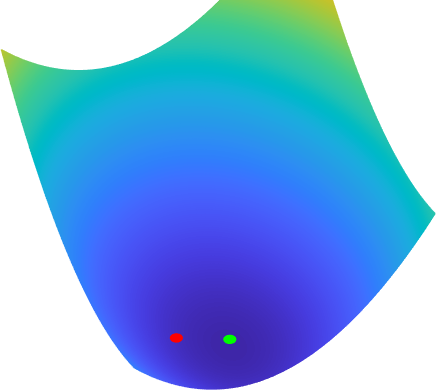}}
	\subfloat[]{\label{fig:majorizers1:c}\includegraphics[width=0.32\textwidth]{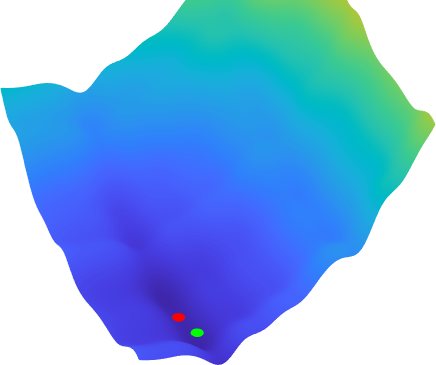}}
	\caption{Visualization of related work. (a) shows the original function of type \cref{eq:problem}, (b) shows a gradient descent majorizer \cref{eq:intro_grad}, (c) shows a forward-backward splitting majorizer \cref{eq:intro_fb}. The point $u^k$ is equal in each figure and shown in red and the minimizer $u^{k+1}$ in green.
\label{fig:majorizers1}}
\end{figure}
\begin{figure}
	\centering
	\subfloat[]{\label{fig:majorizers2:a}\includegraphics[width=0.32\textwidth]{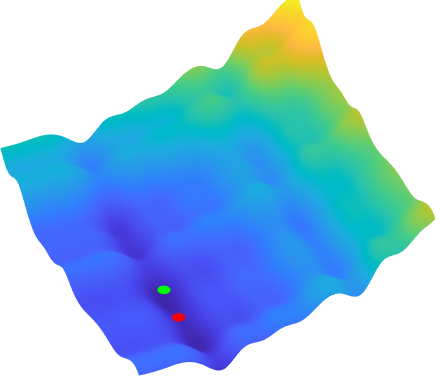}}
	\subfloat[]{\label{fig:majorizers2:b}\includegraphics[width=0.32\textwidth]{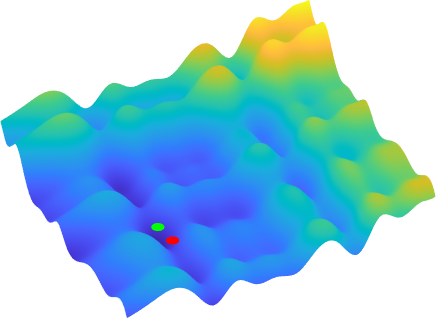}}
	\subfloat[]{\label{fig:majorizers2:c}\includegraphics[width=0.32\textwidth]{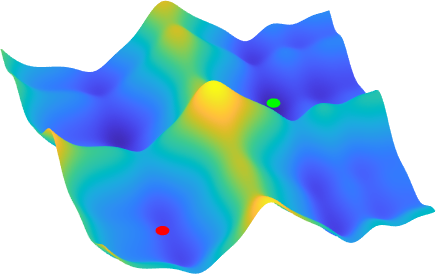}}
	\caption{Visualization of related work. (a) shows a prox-linear type inner linearization \cref{eq:rel_lm}, (b) shows an outer linearization \cref{eq:rel_pl}, (c) shows finally shows the proposed (separable) majorizer \cref{eq:aux}. The point $u^k$ is equal in each figure and shown in red and the minimizer $u^{k+1}$ in green.
		\label{fig:majorizers2}}
\end{figure}

As a first 
 visualization, \cref{fig:majorizers1} and \cref{fig:majorizers2} show these majorization functions in two dimensions. For a nonconvex function of type \cref{eq:problem} in \cref{fig:majorizers1:a}, a gradient descent majorizer is shown in \cref{fig:majorizers1:b} and a forward-backward splitting in \cref{fig:majorizers1:c}. We 
 see that for both majorizers their respective minimizers, marked in green, are located in a close neighborhood to the current iterate, marked in red. Both algorithms will likely converge to a nearby local minimum of the original energy \cref{fig:majorizers1:a}.

The presented related majorizers for composite optimization are shown in \cref{fig:majorizers2}. \Cref{fig:majorizers2:a} and \Cref{fig:majorizers2:b} show both linearization variants, namely \cref{eq:rel_lm} and \cref{eq:rel_pl}. These generally produce more faithful representations of the original energy (\cref{fig:majorizers1:a}), but both minimizers are still far away from the global minimum. 
Finally, \Cref{fig:majorizers2:c} shows our majorizer \cref{eq:aux}. Note that the minimizer of this majorizer can not only be computed efficiently due to its separability, but also allows for a global view of the function and its minimizer almost coincides with the global minimum although the initial point is quite far from it.

Finally, a recent preprint \cite{bolte_nonconvex_2018} proposes to solve composite minimization problems with a different approach, namely a nonlinear splitting variant, reformulating the problem to
\begin{equation}
\min_{u,v \in \R^n} G(v) + R(u) \quad \textnormal{ s.t. } \rho(u) = v,
\end{equation}
and introducing an augmented Lagrangian formulation
\begin{equation} \label{eq:ALM}
\min_{u,v \in \R^n} G(v) + R(u) + \langle w,\rho(u)-v \rangle + \frac{\tau}{2} ||\rho(u)-v||^2.
\end{equation}
with an additional variable $w \in \R^n$ that mimics the dual variable of the convex setting.
 This is a quite interesting result, as it shows that the complementarity of forward-backward splitting and augmented Lagrangian methods can be extended into the composite setting. Whereas our method is a generalization of forward-backward splittings, their work generalizes ADMM \cite{gabay_dual_1976}. Critically both ours and their approach rely on the efficient solution of a nonlinear programming task as intermediate step in the algorithm. For us, this is the nonconvex majorizer \cref{eq:aux}, the corresponding problem in \cite[Eq. (6.3)]{bolte_nonconvex_2018} is the minimization of \cref{eq:ALM} for $u$:
\begin{equation}
u^{k+1} = \argmin_{u} G(v^{k+1})+R(u)+\langle w^k,\rho(u)-v^{k+1}\rangle + \frac{\tau^k}{2}||\rho(u)-v^{k+1}||^2 + \frac{\mu}{2}||u-u^k||^2.
\end{equation}
Both subproblems are in general equally difficult as they are connected for $\mu=0$, identifying $v^{k+1}=\rho(u^k)$ and $w^k = \nabla G(\rho(u^k))$.

Although formulated in less generality in terms of the involved functions but in more generality in terms of the number of nested functions, the update equation of the related work \cite[Eq. (11)]{frerix_proximal_2018} for solving problem \eqref{eq:problem} can be written as
\begin{align*}
u^{k+1} =& \arg\min_u R(u) + \frac{1}{2}\|\rho(u) - \rho(u^k)\|^2 + \sigma\langle \nabla  G(\rho(u^k)), \rho(u) - \rho(u^k)\rangle  + \frac{1}{2 \tau}\|u-u^k\|^2,
\end{align*}
for an affine linear $\rho$. This is similar to the proposed algorithm but also contains the additional proximity term for $u-u^k$. The analysis we provide in this work could make it interesting to revisit \cite{frerix_proximal_2018} in the two-layer case. 

\textbf{Solving the subproblems via lifting.}
While the convergence analysis of our approach will make rather weak assumptions on the global quality of the solution used in each of the subproblems \cref{eq:aux}, we found our method to be particularly effective and successful if the (nonconvex) subproblems are solved to global optimality. This raises the question what types of functions allow to determine globally optimal solutions to such subproblems. 

A rather simple case occurs if the involved functions are separable or separable into blocks of few variables. In these situations we can apply exhaustive search and branch-and bound algorithms to each block separately \cite{kearfott_rigorous_2013,hansen_global_2004}.

More interesting for imaging tasks is the class of functions where the subproblems can be solved by \textit{functional lifting}. It was shown in \cite{pock_global_2010,chambolle_convex_2012} that free discontinuity-type energies, in particular,
\begin{equation}\label{eq:cont_lift}
E(u) = \int_\Omega \nu(x,u(x),\nabla u(x)) \ dx , \quad u \in W^{1,1}(\R^n,\R)
\end{equation}
with $\nu:\R \times \R \times \R^d \to \R$ being continuous in its second argument, and convex and continuous in its third argument, can be optimized globally by \textit{lifting} the problem into a higher dimensional space where it admits a convex representation. Recent works, e.g. \cite{mollenhoff_sublabel-accurate_2016,mollenhoff_sublabel-accurate_2017}, discuss how to discretize the continuous formulation accurately and return to the finite-dimensional setting of this work.

We therefore expect to be able to solve all nonconvex majorizers $E_{u^k}$ that are a discretization of \cref{eq:cont_lift} to (near)-global optimality, allowing us to consider highly non-trivial majorizing functions. 
Similar relaxation methods exist in the discrete community via graph cuts for Markov random fields, see  \cite{kolmogorov_what_2004,boykov_fast_2001-1,ishikawa_segmentation_1998} and the references therein.

\subsection{Organization of this work}
This work introduces an optimization algorithm for the sum of a function and a composite function, which iteratively minimizes a nonconvex majorizing function \cref{eq:aux}. The algorithm is detailed and discussed in \cref{sec:principle} and basic properties are discussed in the first part of  \cref{sec:convergence}. The second part of \cref{sec:convergence} then extends these basic properties to a global convergence under the K\L-property and uniqueness of $R$-minimizing solutions. Several generalizations and implementation details follow in \cref{sec:generalized}. Finally, \cref{sec:experiments} shows some promising numerical results on synthetic examples where the proposed algorithm is able to find better minima than competing first-order methods, while being much more efficient than methods from global optimization applied to the discussed problem class \cref{eq:problem}. We then close \cref{sec:experiments} with an application to depth super resolution from noisy time-of-flight data.

\section{The General Principle}\label{sec:principle}

Before we begin the formal introduction of the necessary context and provide convergence and basic properties in their full generality it is instructive to reduce the problem formulation to a very simple test case.

Let us consider the standard Jacobi-iteration:
\begin{equation}
u^{k+1} = D^{-1}(f-(A-D)u^k)
\end{equation}
which solves the linear equation $Au=f$ for  symmetric $A \in \R^{n \times n}$ whose diagonal is $D$. We can interpret this scheme as successively minimizing the function
\begin{equation}
E_{u^k}(u) =  \langle u,\frac{1}{2}Du+(A-D)u^k-f \rangle - \langle u^k,\frac{1}{2}(A-D)u^k \rangle,
\end{equation}
which is a majorizer to $E(u) = \frac{1}{2} \langle u,Au-f\rangle$, if $D-A$ is positive definite. 
Now we would like to solve the nonlinear equation system $A\rho(u) = f $ for some function $\rho:\R^m \to \R^n$. And we do the same as before and apply our previous majorizer to $\rho(u)$:
\begin{equation}\label{eq:jacobi_nonlin}
u^{k+1} = \argmin_u \langle \rho(u),\frac{1}{2}D\rho(u)+(A-D)\rho(u^k)-f\rangle - \langle \rho(u^k),\frac{1}{2}(A-D)\rho(u^k) \rangle .
\end{equation}
If $\rho$ is separable, then these problems can still be solved efficiently in each dimension, thereby iteratively solving $A\rho(u)=f$.
As we will see in more detail in \cref{ex:jacobi}, \cref{eq:jacobi_nonlin} is a particular instance of the algorithm we propose and study in this paper, yielding nonconvex majorizers that are still easy to minimize. 
While this illustrates the main idea of our algorithm, the situation becomes even more interesting if an additional regularization $R$ makes a substitution like $z=\rho(u)$ impossible. 

\subsection{The Algorithm}\label{sec:the_alg}
Now we are ready to formulate the algorithm in full generality.

We consider the task of minimizing functions $E:\R^n \to \R \cup \lbrace+ \infty \rbrace =: \overline{\R}$ and define the domain of $E$ by $\dom{E} =: \lbrace u \in \R^n \ | \ E(u) < \infty \rbrace$. We denote the closure of this domain by $\overline{\dom{E}}$. A function is proper if $\dom{E} \neq \emptyset$. We call a function lower semi-continuous if we have $\liminf_{u \to \bar{u}} E(u) \geq E(\bar{u})$ for all  $\bar{u} \in \dom{E}$. The distance of a vector $u \in \R^n$ to a subset $S$ of $\R^n$ is defined via $\operatorname{dist}(u,S) = \inf_{x \in S}||u-x||$. We denote the pre-image of a mapping $\rho$ on a set $S$ by $\rho^{-1}(S)$. A proper function is essentially smooth if its convex subdifferential $\partial h$ is locally bounded and single-valued on its domain \cite{bauschke_essential_2001} or equivalently if $\dom{\partial h} = \intdom{h} \neq \emptyset$ \cite[Thm 26.1]{rockafellar_convex_1970}.

We consider the optimization problem 
\begin{equation}\label{eq:the_problem}
\min_{u \in \Delta} E(u) = G(\rho(u)) + R(u)  ,
\end{equation}
minimizing the composite and additive model $E$ over a closed set defined via $\Delta = \rho^{-1}(C)$ for a closed convex set $C \subset \R^m$ with $\operatorname{int} C \neq \emptyset$. We employ a convex function $h$ that mirrors the geometry of the problem and mimics the behavior of $G$.
We make the following assumptions on these functions:

\textbf{Basic Assumptions:}
\begin{itemize}
	\item $h:\R^m \to \overline{\R}$ is a proper, lower semi-continuous,  convex function that is essentially smooth with $ \overline{\dom{h}} = C$, 
	\item $G:\R^m \to \overline{\R}$ is a proper, lower semi-continuous function with $\dom{h} \subset \dom{G}$, which is differentiable on $\intdom{h}$
	\item $R: \R^n \to \overline{\R}$ is a proper, lower semi-continuous function and $\dom{R} \cap \rho^{-1}(\intdom{h}) \neq \emptyset$.
	\item $\rho: \R^n \to \R^m$ is a continuous function.
\end{itemize}
Under these assumptions, $E$ is a proper, lower semi-continuous objective function.
We define the Bregman distance of two vectors $u \in \R^m$ and $v \in \intdom{h} \subset \R^m$ relative to the chosen function $h$ by
\begin{equation*}
D_h(u,v) = h(u) - h(v) - \langle \nabla h(v),u-v \rangle.
\end{equation*} 
and we set $D_h(u,v) = \infty$ if $v \notin \intdom{h}$.
We choose a step size $\tau >0$ to be discussed later, a starting vector $u^0 \in \rho^{-1}(\intdom{h})$, and then apply  the following iterative scheme: 
\importantBox{\textbf{Main Algorithm:}
\begin{equation}\label{eq:the_alg}
u^{k+1} \in \argmin_{u \in \R^n} \frac{1}{\tau}D_h(\rho(u),\rho(u^k)) + \langle \nabla G(\rho(u^k)),\rho(u)-\rho(u^k) \rangle + G(\rho(u^k)) + R(u) 
\end{equation} 
\vspace{0.125cm}
}
\vspace{0.25cm}
Dicussions of well-definedness and convergence will also follow later in \cref{sec:convergence}.
The use of a Bregman distance is an immediate generalization of the usual squared norms, e.g via $h(u) = \frac{1}{2}||u||_2^2$, which allows us a greater level of generality, as we will discuss later in \cref{sec:generalized}.  
\begin{example}\label{ex:jacobi}
Returning to the Jacobi example from before, we now see in particular that setting $G(v) = \frac{1}{2}\langle v,Av-f\rangle$, $R(u) = 0$ and $h(u) = \frac{1}{2}||u||_D^2$ exactly recovers the nonlinear Jacobi updates in \cref{eq:jacobi_nonlin}.
\end{example}
In practice this algorithm is applicable even if the subproblems \cref{eq:the_alg} can only be solved up to a local optimum. However it is especially interesting if \cref{eq:the_alg} can actually be solved globally. In our applications we mainly consider three interesting cases for this, although our theoretical analysis in later chapters is not necessarily limited to those. 

First, if $\rho$ and $R$ are Lipschitz and separable, in the sense that $\rho:\R^n \to \R^n$ can be written as $\rho(u) = (\rho_1(u_1),\dots,\rho_n(u_n))$ and $R:\R^n \to \overline{\R}$ can be written as $R(u) = \sum_{i=1}^n r_i(u_i)$, then \eqref{eq:the_alg} decomposes into one-dimensional subproblems for each $u_i$. We use separable $h(u) = \sum_{i=1}^m h_i(u_i)$, so that $D_{h_i}(u_i,v_i) = h_i(u_i)-h_i(v_i)-h_i'(v_i)(u_i-v_i)$ and find that the majorizer decouples so that
\begin{equation}
 	u_i^{k+1} \in \argmin_{u_i} \frac{1}{\tau}D_{h_i}(\rho_i(u_i),\rho_i(u_i^k)) + \frac{\partial G(\rho(u^k))}{\partial u_i}(\rho_i(u_i)-\rho_i(u_i^k)) + r_i(u_i).
\end{equation}
These univariate nonconvex problems can be solved very efficiently and in parallel by uniform grid searches or more elaborate exhaustive branch-and-bound strategies, due to the Lipschitz properties $R$ and $\rho$ whenever $R$ has a bounded domain. 

A particularly interesting and practically relevant case are energies of the form
\begin{equation}\label{eq:gen_prob}
E(u) = \sum_{i=1}^m F_i \left(\sum_{j=1}^n \rho_{ij}(u_j) \right) + \sum_{i=1}^n r_i(u_i),
\end{equation}
where we have $\rho_{ij}: \R \to \R$ and $r_i : \R \to \overline{\R}$, $F_i:\R \to \overline{\R}$ and we again assume a bounded domain.
These models appear naturally in several nonlinear regression tasks. But again, the problem can be decomposed into one-dimensional subproblems and we apply our algorithm, as the subproblems decouple if we set $G(v) = \sum_{i=1}^m F_i(\sum_{j=1}^n v_{ij})$ and $\rho = (\rho_{11},\dots,\rho_{mn})$.

Remarkably, both of the above examples still yield (near)-globally solvable subproblems, if the separable regularization is replaced by a suitable penalty on the gradient of the unknown. While such subproblems are nonconvex and non-separable they can still be solved efficiently with the lifting techniques discussed in the context of equation \cref{eq:cont_lift}. We detail these types of problems in \cref{sec:related_tv}.

\subsection{Special Cases}

We note several cases, where the method reduces to simpler approaches: First, if $\rho$ is the identity, then we immediately recover a non-composite problem, the setting of \cite{bolte_first_2018}. If $\rho$ is invertible, then we can minimize over $z$ with the regularizer $R(\rho^{-1}(z))$ and again recover a non-composite problem. Further, if $G$ is separable as well, then it would be easier to take the whole problem directly as a nonconvex majorizer, which would converge in a single iteration. 
If the regularizer $R$ is zero, then the algorithm works fine, yet we would like to highlight that it is possibly easier to solve the minimization over $G(v)$ first under the constraint of $v \in \rho^{-1}(\intdom{h})$ (for separable $\rho,h$ this would be an especially easy constraint), and then optimize $D_h(\rho(u),v^*)$.

	 \begin{figure}
	 	\includegraphics[width=0.49\textwidth]{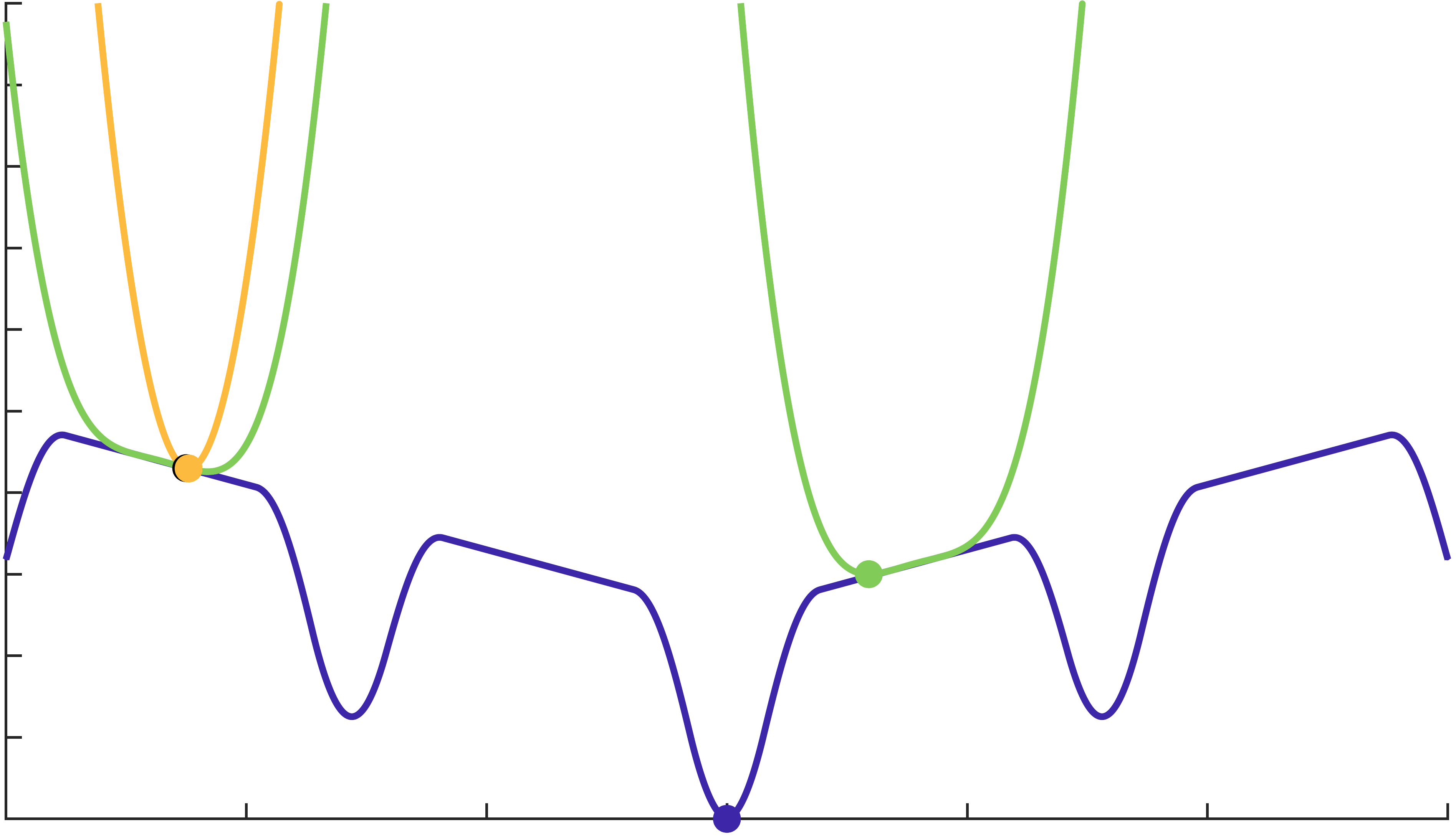}
	 	\includegraphics[width=0.49\textwidth]{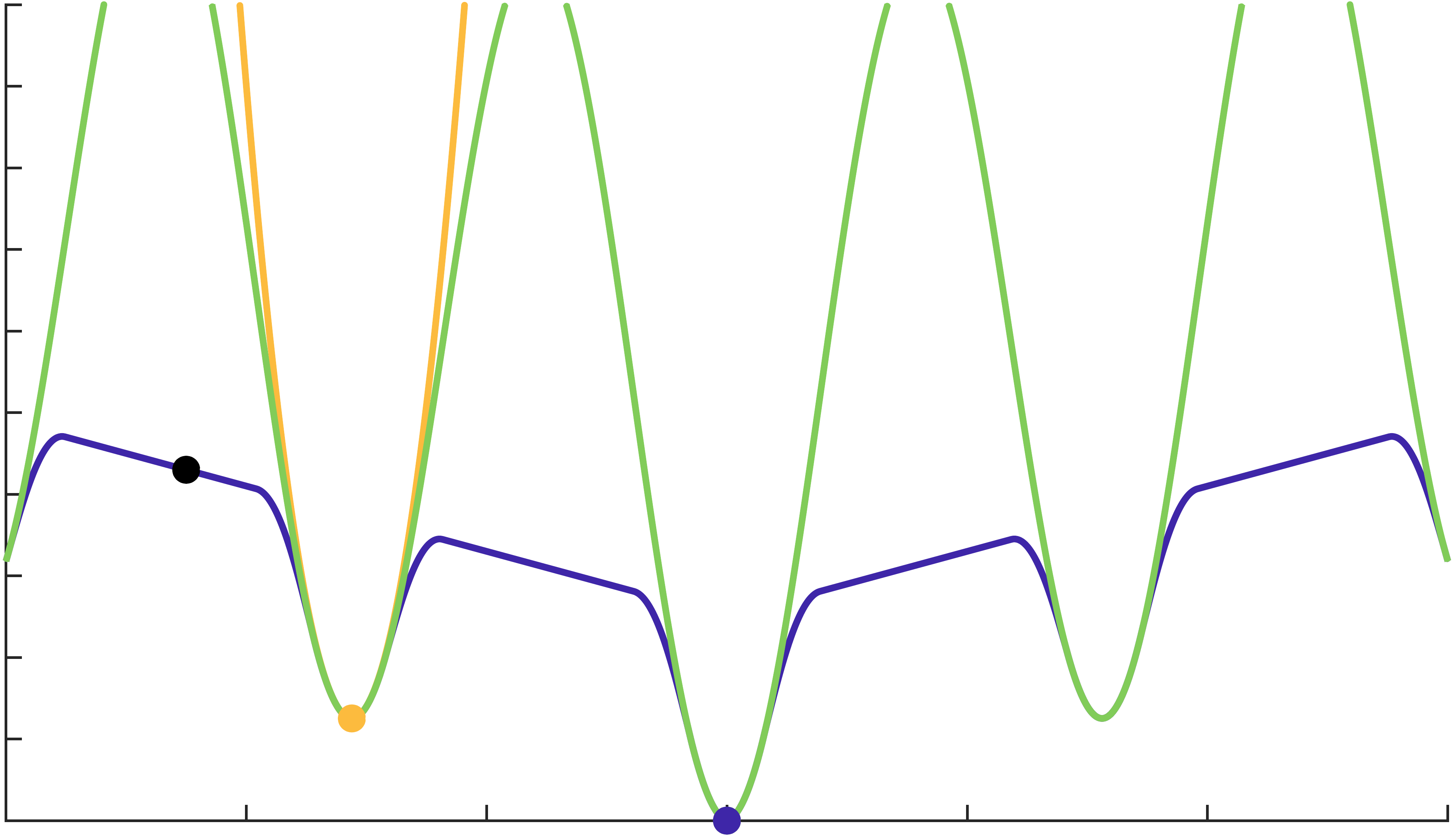} 
	 	\caption{Proximity relative to $\rho$ can be crucial during minimization. Initialization marked in black, global minimum in dark blue. Our majorizer, i.e. \cref{eq:the_alg} and its minimizer are marked in green. A majorizer that measures proximity relative to $u$, i.e. \cref{eq:rel_pl}, shown in yellow. The left figure shows a single step, the right figure shows the algorithm output and final majorizers.}\label{fig:ProxRelative}
	 \end{figure}

\subsection{Proximity relative to the inner function}
	%
	Unlike standard schemes, \cref{eq:the_alg} measures the proximity between $\rho(u)$ and $\rho(u^k)$, instead of $u$ and $u^k$ as in previous works on composite optimization  \cite{drusvyatskiy_efficiency_2016,drusvyatskiy_nonsmooth_2016, bonettini_variable_2016,ochs_non-smooth_2017}. However our choice, motivated by the nonlinear Jacobi example previously mentioned, is advantageous, whenever the subproblems can still be solved efficiently.
	
	The main advantage is the leverage we gain. By updating relative to $\rho$, we are able to directly apply smoothness properties and subsequent descent lemmas for $G$, easily finding a global majorizer in each step. Furthermore, our step size can now be chosen analytically independent of $\rho$ and all new iterates are feasible in the sense that $u^{k+1} \in \textnormal{dom }\rho$ and $\rho(u^{k+1}) \in \textnormal{dom } h $. 
	
	To make a more intuitive argument, we also note that penalizing the direct proximity between $u$ and $u^k$ of course limits the updates $u^{k+1}$ to a neighborhood of $u^k$. 
	If we are able to solve subproblems to global optimality, then limiting our updates in a local area seems unnecessary. If we penalize the proximity in $\rho$, then we only stay in a local area relative to $G$, which is necessary, as we linearized $G$. But otherwise we allow for arbitrarily large updates as long as $\rho(u)$ is similar to $\rho(u^k)$, which is of no issue, as we solve our subproblems globally.
	 By this approach we hope to find interesting stationary points globally and not just locally in a neighborhood around the starting vector. 
	 
	 \Cref{fig:ProxRelative} visualizes this behavior in 1D. Given $G$, a smooth version of $\min(u^2,\lambda)$, $\rho(u)=\sin(u)$ and $R(u)=\alpha|u|$ we majorize around the black mark using $h(u)=u^2$. We see that the proximity relative to $\rho$ is critical for reaching the global minimizer.
	
	It is quite instructive to compute both update steps for a linear composition example, i.e. $E(u) = F(Au)$ for $A\in\R^{m \times n}$. One can check that the updates relative to $\rho_{ij}(u_j):=a_{ij}u_j$ in \cref{eq:gen_prob} then correspond to a gradient descent with diagonal preconditioning, whereas the update in $u$ directly would correspond to standard gradient descent.

\section{Algorithm Discussion and Convergence}\label{sec:convergence}
In the following section we will analyze convergence properties of the proposed algorithm. We will specify the 
assumptions we make on $G$, discuss well-posedness of subproblems and give a descent lemma.  
We will then make further assumptions on tameness of the function and uniqueness of $R$-minimizing solutions to prove global convergence.

During this discussion we will move toward the exact structure of the main algorithms \cref{eq:the_alg} in three steps, the first two being variants, where we first only assume that the subproblems \cref{eq:the_alg} are solved 'sufficiently' and then only assume that the subproblems are solved 'sufficiently' to a stationary point. We do this to highlight precisely when global solutions to the subproblems are necessary and what advantages this confers; knowing that for some problems, solving the nonconvex subproblem to global optimality, might be too difficult.

\subsection{Basic Properties}
To find fixed step sizes for the algorithm we need to assume some bound on the change in the gradient of the function. An appropriate generalization of Lipschitz continuity that gives a bound "relative" to the chosen function $h$ \cite{bauschke_descent_2017,bolte_first_2018}, defines the following property: 
\begin{definition}[L-smooth adaptable]\label{def:lsmad}
	A proper, lower semi-continuous function $G:\R^m \to \overline{\R}$ is called L-smooth adaptable relative to a convex function $h$
	 if there exist $L>0$ so that $Lh-G$ is convex on $\intdom{h}$.	
\end{definition}

As a consequence of the L-smooth adaptability property we have the following descent inequality:
\begin{lemma}[{Descent Lemma, \cite[Lemma 1]{bauschke_descent_2017}}]\label{lem:LCdescent}
	If the proper lower semi-continuous function $G$ is L-smooth adaptable relative to an essentially smooth convex function $h$ so that $\dom{h} \subset \dom{G}$ and $G$ is differentiable on $\intdom{h}$, then
	\begin{equation*}
	 G(z)-G(w)-\langle \nabla G(w),z-w \rangle  \leq LD_h(z,w) \quad \forall z,w \in \intdom{h}.
	\end{equation*}
\end{lemma}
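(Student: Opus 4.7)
The plan is to unfold the definition of $L$-smooth adaptability and apply the standard gradient inequality for convex differentiable functions to the auxiliary function $\phi := Lh - G$. By assumption, $\phi$ is convex on $\intdom{h}$. Moreover, $h$ is essentially smooth (so $\nabla h$ exists and is single-valued on $\intdom{h}$) and $G$ is differentiable on $\intdom{h}$, hence $\phi$ is differentiable there as well. Since $\intdom{h}$ is an open convex set on which $\phi$ is convex and differentiable, the usual gradient inequality applies to any pair $z,w \in \intdom{h}$.

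Concretely, I would write out the gradient inequality
\begin{equation*}
\phi(z) \geq \phi(w) + \langle \nabla \phi(w), z-w\rangle,
\end{equation*}
substitute $\phi = Lh - G$ and $\nabla \phi = L\nabla h - \nabla G$, and rearrange. Grouping the $h$-terms yields exactly $L\,D_h(z,w)$ on one side, and grouping the $G$-terms yields $G(z) - G(w) - \langle \nabla G(w), z-w\rangle$ on the other, producing the claimed inequality.

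The main obstacle is not really a mathematical difficulty but a bookkeeping one: one has to be a bit careful to justify that $\nabla \phi$ is well-defined on $\intdom{h}$ (using the essential smoothness of $h$ together with the differentiability assumption on $G$) and that the gradient inequality indeed holds for convex functions that are only assumed convex on an open set, not globally. Both points are standard (see, e.g., \cite[Thm 26.1]{rockafellar_convex_1970} for the smoothness claim, and the subgradient inequality restricted to pairs in the open convex domain for the second). After these are in place the conclusion is an immediate rearrangement, so no further work is needed.
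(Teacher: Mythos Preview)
Your proof is correct and is essentially the same as the paper's: the paper phrases the argument as ``$D_{Lh-G}(z,w)\geq 0$ by convexity, hence $D_G(z,w)\leq L\,D_h(z,w)$ by additivity of the Bregman distance,'' which is exactly your gradient inequality for $\phi=Lh-G$ followed by rearrangement, just in Bregman-distance notation.
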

\begin{proof}
	$D_f(z,w)\geq 0 \ \forall z,w \in \R^n $ if and only if $f$ is convex. Hence $D_{Lh-G}(z,w)\geq 0$ which yields $D_G(z,w) \leq L D_h(z,w)$, due to the additivity of the Bregman distance.
\end{proof}

We define the subproblem energy in the following by
\begin{equation}\label{eq:maj}
E_{u^k}(u) = \frac{1}{\tau}D_h(\rho(u),\rho(u^k)) +  G(\rho(u^k)) + \langle \nabla G(\rho(u^k)),\rho(u)-\rho(u^k) \rangle + R(u).
\end{equation}
for some $u^k$ so that $\rho(u^k) \in \intdom{h}$.
For the first part of this section, we now make the following assumptions:

{ \bf Assumptions A:}
\begin{itemize}
	\item Our basic assumptions from \cref{sec:the_alg} hold,
	\item $E$ is bounded from below,
	\item $E$ is coercive,
	\item $Lh-G$ is convex on $\intdom{h}$,
	\item Every iteration is solved sufficiently, so that $E_{u^k}(u^{k+1}) \leq E_{u^k}(u^k) = E(u^k)$ and  $u^{k+1} \in \rho^{-1}(\intdom{h})$.
\end{itemize} 
Under these assumptions we will discuss the validity of \cref{eq:maj} as a majorizer for $E$ and the well-posedness of the minimization of $E_{u^k}(u)$. Note that in practice, we often gain coerciveness by considering functions $E$ with a bounded domain. The most important assumption here is the smoothness assumption on $G$ given by its $L$-smooth adaptability. The fifth assumption is very general and holds, for example, already when each sub-problem is solved only by finite sampling. We also need the technical assumption that $\rho(u^{k+1}) \in \intdom{h}$, which holds e.g. if $\dom{h} = \R^m$ or if $R$ is convex and $\rho$ is continuously differentiable. It can also be guaranteed through a set of constraint qualifications arising from \cite[10.6,10.9]{rockafellar_variational_2009}, yet we omit further discussion of this issue.

\begin{lemma}[Majorization Property]
	Under the assumptions A, given some $\tau \leq \frac{1}{L}$ and $\rho(u^k) \in \intdom{h}$, $E_{u^k}(u)$ is a majorizer of $E$, i.e. it fulfills the properties
	\vspace{5pt}
	\begin{itemize}[leftmargin=40pt]
		\item $E_{u^k}(u) \geq E(u) \quad \forall u\in \R^n$
		\item $E_{u^k}(u^k) = E(u^k)$.
	\end{itemize} 
\end{lemma}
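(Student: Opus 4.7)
The plan is straightforward: the claim is essentially a rearrangement of the Descent Lemma (Lemma~2), so I would verify the two properties in order with the bulk of the work already done.

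\textbf{Equality at $u^k$.} First I would substitute $u = u^k$ into \cref{eq:maj}. The Bregman distance $D_h(\rho(u^k), \rho(u^k))$ vanishes by its definition (provided $\rho(u^k) \in \intdom{h}$, which is hypothesized), and the inner product $\langle \nabla G(\rho(u^k)), \rho(u^k) - \rho(u^k)\rangle$ is zero. What remains is exactly $G(\rho(u^k)) + R(u^k) = E(u^k)$.

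\textbf{Majorization inequality.} Next I would subtract $E(u)$ from $E_{u^k}(u)$. The $R(u)$ terms cancel, reducing the claim $E_{u^k}(u) \geq E(u)$ to the single inequality
$$G(\rho(u)) - G(\rho(u^k)) - \langle \nabla G(\rho(u^k)),\rho(u)-\rho(u^k)\rangle \;\leq\; \tfrac{1}{\tau}\, D_h(\rho(u),\rho(u^k)).$$
For $u$ with $\rho(u) \in \intdom{h}$, this is exactly the descent inequality of Lemma~2 applied to $z = \rho(u)$ and $w = \rho(u^k)$, with the factor $L$ replaced by $\tfrac{1}{\tau}$; the step-size condition $\tau \leq 1/L$ gives $\tfrac{1}{\tau} \geq L$, so the bound follows.

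\textbf{Handling points outside $\intdom{h}$.} For $u$ with $\rho(u) \notin \dom{h}$, we have $h(\rho(u)) = +\infty$, hence $D_h(\rho(u), \rho(u^k)) = +\infty$ and therefore $E_{u^k}(u) = +\infty \geq E(u)$ trivially. The only delicate case is a boundary point $\rho(u) \in \dom{h}\setminus\intdom{h}$. I would handle this by approaching $\rho(u)$ along a segment $z_n = (1-\lambda_n)\rho(u) + \lambda_n z_0$ for some fixed $z_0 \in \intdom{h}$ and $\lambda_n \downarrow 0$: convexity of $h$ gives $\limsup_n h(z_n) \leq h(\rho(u))$, which combined with lower semi-continuity of $h$ yields $h(z_n) \to h(\rho(u))$, and lower semi-continuity of $G$ gives $G(\rho(u)) \leq \liminf_n G(z_n)$. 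Passing to the limit in the descent inequality written for the interior points $z_n$ then transfers the bound to $\rho(u)$.

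The main obstacle is really only this last boundary argument; the interior case is an immediate rewriting of Lemma~2, and the equality at $u^k$ is bookkeeping. In practice, the clean cases $\dom{h} = \R^m$ (no boundary exists) or $h$ continuous on $\dom{h}$ remove the subtlety altogether, which is consistent with the paper's later technical comments on ensuring $\rho(u^{k+1}) \in \intdom{h}$.
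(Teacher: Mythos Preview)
Your argument is correct and follows essentially the same route as the paper: verify $E_{u^k}(u^k)=E(u^k)$ by direct substitution, then reduce $E_{u^k}(u)\geq E(u)$ to the descent inequality of \cref{lem:LCdescent} together with $\tfrac{1}{\tau}\geq L$. The only difference is your treatment of points with $\rho(u)\in\dom{h}\setminus\intdom{h}$: the paper simply asserts $E_{u^k}(u)=\infty$ there, whereas your limiting argument along a segment into the interior is a more careful justification of the same inequality at such boundary points.
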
 
\begin{proof}
	A quick computation shows that $E_{u^k}(u^k)$ is equal to $E(u^k)$, as the Bregman distance $D_h(x,y)$ is zero if $x=y$ and $x,y \in \intdom{h}$.
	Now, using \cref{lem:LCdescent} for $G$ and inserting arbitrary $\rho(u) \in \intdom{h}$ and $\rho(u^k) \in \intdom{h}$ gives
	\begin{equation*}
	G(\rho(u)) \leq G(\rho(u^k)) + \langle \nabla G(\rho(u^k)),\rho(u)-\rho(u^k) \rangle + L D_h(\rho(u),\rho(u^k)).
	\end{equation*}
	On the other hand, we have, due to $\frac{1}{\tau} \geq L$,
	\begin{equation*}
	E_{u^k}(u) \geq LD_h(\rho(u),\rho(u^k)) + G(\rho(u^k)) + \langle \nabla G(\rho(u^k)),\rho(u)-\rho(u^k) \rangle + R(u).
	\end{equation*}
	Combining both inequalities gives the desired result for any $ u \in \R^n \textnormal{ s.t. }\rho(u) \in \intdom{h}$. If $\rho(u) \notin \intdom{h}$, then $E_{u^k}(u) = \infty$, so that the inequality is trivially fulfilled.
\end{proof}

Now let us consider the set of minimizers of $E_{u^k}$ \cref{eq:maj}:
\begin{equation}\label{eq:solution_set}
	M_\tau(u^k) = \left\lbrace \bar{u} \in \Delta \ | \ \bar{u} \in \argmin_u E_{u^k}(u) \right\rbrace 
\end{equation}

\begin{lemma}\label{lem:well_posed}
	Under the assumptions A, the set $M_\tau(u^k)$ is non-empty and compact for any $u^k \in \text{dom }E$ if $\tau \leq \frac{1}{L}$.
\end{lemma}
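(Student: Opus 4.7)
The plan is to apply the direct method of the calculus of variations (Weierstrass-style): show that $E_{u^k}$ is proper, lower semi-continuous, and coercive on $\R^n$, and then conclude that the argmin is a nonempty sublevel set, hence compact.

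\textbf{Properness.} I would first observe that $E_{u^k}(u^k) = E(u^k) < \infty$ since $u^k \in \dom E$ (this uses that $\rho(u^k)\in\intdom{h}$, so the Bregman term vanishes), which is exactly the second part of the majorization lemma just established. Hence $\dom E_{u^k}\neq\emptyset$.

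\textbf{Lower semi-continuity.} Next I would check the four summands of $E_{u^k}$ individually. The term $D_h(\rho(u),\rho(u^k)) = h(\rho(u)) - h(\rho(u^k)) - \langle \nabla h(\rho(u^k)),\rho(u)-\rho(u^k)\rangle$ is l.s.c. in $u$: $h\circ\rho$ is l.s.c. as the composition of a proper l.s.c. function with the continuous map $\rho$, while the remaining piece is continuous in $u$ (a fixed inner product of $\nabla h(\rho(u^k))$ with the continuous map $\rho$, plus constants). The gradient-linearization term $\langle \nabla G(\rho(u^k)),\rho(u)-\rho(u^k)\rangle$ is continuous for the same reason, and $R$ is l.s.c. by the basic assumptions. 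Summing l.s.c. functions preserves l.s.c., so $E_{u^k}$ is l.s.c.

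\textbf{Coercivity.} Here I would invoke the majorization lemma already proved: because $\tau\leq 1/L$ and $\rho(u^k)\in\intdom{h}$, we have $E_{u^k}(u)\geq E(u)$ for all $u\in\R^n$. Hence every sublevel set satisfies
\begin{equation*}
\{u\in\R^n \mid E_{u^k}(u)\leq c\} \subseteq \{u\in\R^n \mid E(u)\leq c\},
\end{equation*}
and the right-hand side is bounded by the coercivity of $E$ (Assumptions A). Thus $E_{u^k}$ is coercive.

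\textbf{Conclusion.} Combining l.s.c. with coercivity, every sublevel set of $E_{u^k}$ is closed and bounded in $\R^n$, hence compact. Properness then lets the standard Weierstrass argument apply: the infimum is finite and attained, and $M_\tau(u^k)$ is precisely the sublevel set at the optimal value, which is therefore nonempty and compact. Finally, any minimizer satisfies $\rho(u)\in\dom h\subset C$ (otherwise $D_h(\rho(u),\rho(u^k))=\infty$, contradicting minimality given that the value at $u^k$ is finite), so $M_\tau(u^k)\subset\Delta$ as required by the definition in \cref{eq:solution_set}. I expect the only delicate point to be the l.s.c. of the Bregman term through the nonlinear $\rho$, but this is immediate once one rewrites $D_h(\rho(\cdot),\rho(u^k))$ as l.s.c. $+$ continuous; everything else is a routine consequence of Assumptions A and the majorization inequality.
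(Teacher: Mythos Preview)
Your proof is correct and follows essentially the same route as the paper's: both use the majorization inequality $E_{u^k}\geq E$ to inherit coercivity (and boundedness below) from $E$, then invoke a standard Weierstrass/Rockafellar--Wets existence result for proper l.s.c.\ coercive functions. Your version is more explicit about lower semi-continuity and properness of $E_{u^k}$---which the paper simply asserts---and additionally verifies $M_\tau(u^k)\subset\Delta$, but the core argument is identical.
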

\begin{proof}
	We already know that $E$ is coercive. However, as $E_{u^k}$ is a majorizer for $\tau \leq \frac{1}{L}$, $E_{u^k}(u) \geq E(u)$, it is itself coercive. Furthermore $E$ and hence $E_{u^k}$ is bounded from below. As a result $E_{u^k}(u)$ is lower semi-continuous and proper with bounded level sets. \cite[Theorem 1.9]{rockafellar_variational_2009} now guarantees that $\inf E_{u^k}$ is finite and that the set $M_\tau(u^k)$ is non-empty and compact. 
\end{proof}

\subsection{Descent Properties}

As usual for majorization-minimization algorithms, we gain a monotone decrease in the objective function:
\begin{lemma}[Descent Lemma]\label{lem:descent}
	If the assumptions A hold, then the energy $E$ is monotonically decreasing for all iterates $u^k$ if $\tau < \frac{1}{L}$ is chosen as step size. The descent rate is 
	\begin{equation}\label{eq:descent}
	E(u^{k+1})-E(u^k) \leq -\frac{1-\tau L}{\tau}D_h(\rho(u^{k+1}),\rho(u^k)))
.	\end{equation}
\end{lemma}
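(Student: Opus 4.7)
The plan is to combine the descent inequality from Lemma~\ref{lem:LCdescent} (applied to $G$ at the points $\rho(u^{k+1})$ and $\rho(u^k)$) with the sufficient-decrease hypothesis on the subproblem solver provided by Assumptions~A. The crucial observation is that the majorizer $E_{u^k}$ contains the linearization of $G$ together with the coefficient $\frac{1}{\tau}$ in front of $D_h$, while the descent lemma provides the upper bound on $G(\rho(u^{k+1}))$ in terms of the same linear part but with coefficient $L$. Subtracting these gives exactly the excess $(\frac{1}{\tau} - L) D_h$ that yields the stated rate.

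Concretely, I would proceed in three short steps. First, note that by the assumption $u^{k+1} \in \rho^{-1}(\intdom{h})$ both $\rho(u^k)$ and $\rho(u^{k+1})$ lie in $\intdom{h}$, so all Bregman distances below are finite and well defined. Second, apply Lemma~\ref{lem:LCdescent} to $G$ with $z = \rho(u^{k+1})$ and $w = \rho(u^k)$ to obtain
\begin{equation*}
G(\rho(u^{k+1})) \leq G(\rho(u^k)) + \langle \nabla G(\rho(u^k)),\rho(u^{k+1}) - \rho(u^k)\rangle + L\, D_h(\rho(u^{k+1}),\rho(u^k)).
\end{equation*}
Adding $R(u^{k+1})$ to both sides expresses the left side as $E(u^{k+1})$ and the right side as $E_{u^k}(u^{k+1}) - \frac{1}{\tau} D_h(\rho(u^{k+1}),\rho(u^k)) + L\, D_h(\rho(u^{k+1}),\rho(u^k))$.

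Third, invoke the sufficient-decrease hypothesis from Assumptions~A, namely $E_{u^k}(u^{k+1}) \leq E_{u^k}(u^k) = E(u^k)$, to bound $E_{u^k}(u^{k+1})$ by $E(u^k)$. Collecting the $D_h$ terms then yields
\begin{equation*}
E(u^{k+1}) \leq E(u^k) - \frac{1-\tau L}{\tau}\, D_h(\rho(u^{k+1}),\rho(u^k)),
\end{equation*}
which is the claim. I do not see any real obstacle here: the argument is a one-line combination of the descent lemma, the definition of $E_{u^k}$, and the sufficient-decrease property, so the only thing to be careful about is the bookkeeping to make sure the linear term $\langle \nabla G(\rho(u^k)), \rho(u^{k+1}) - \rho(u^k)\rangle$ and the constant $G(\rho(u^k))$ cancel between the two sides, leaving only the coefficient $\frac{1}{\tau} - L$ in front of $D_h$.
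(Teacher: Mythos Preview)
Your proposal is correct and follows essentially the same approach as the paper: apply \cref{lem:LCdescent} to $G$ at $\rho(u^{k+1}),\rho(u^k)$, combine it with the sufficient-decrease hypothesis $E_{u^k}(u^{k+1})\leq E_{u^k}(u^k)=E(u^k)$ from Assumptions~A, and observe that the linear and constant terms cancel so only the coefficient $\frac{1}{\tau}-L$ in front of $D_h$ remains. The paper simply adds the two inequalities directly rather than rewriting through $E_{u^k}$, but the computation is identical.
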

\begin{proof}
	Using \cref{lem:LCdescent} for $G$ we insert $\rho(u^{k+1})$ and $\rho(u^k)$ so that 
\begin{equation*}
G(\rho(u^{k+1}))-G(\rho(u^k)) \leq \langle \nabla G(\rho(u^k)),\rho(u^{k+1})-\rho(u^k)\rangle + LD_h(\rho(u^{k+1}),\rho(u^k)).
\end{equation*}
Furthermore, because every iteration is solved sufficiently, we know that 
\begin{equation*}
\frac{1}{\tau}D_h(\rho(u^{k+1}),\rho(u^k)) + \langle \nabla G(\rho(u^k)),\rho(u^{k+1})-\rho(u^k)\rangle + R(u^{k+1}) +G(\rho(u^k))\leq G(\rho(u^k)) +R(u^k) .
\end{equation*}
Adding both inequalities yields
\begin{equation*}
G(\rho(u^{k+1}))-G(\rho(u^k)) + R(u^{k+1})-R(u^k) \leq (L-\frac{1}{\tau})D_h(\rho(u^{k+1}),\rho(u^k)),
\end{equation*}
which is the desired result. Due to the convexity of $h$, the right-hand side is always non-negative if $\tau < \frac{1}{L}$.
\end{proof}

From this we gain convergence in function values, subsequence convergence and some notion of convergence speed. For clarity of presentation we define the "outer" sequence $z^k := \rho(u^k)$. 
\begin{corollary}\label{cor:descent}
	
	Under the assumptions A, we see that for a step size of $\tau < \frac{1}{L}$, 
	\begin{itemize}[leftmargin=40pt]
		\item the sequence of function values $(E(u^k))_{k=1}^\infty$ converges to a limit $E^*$,
		\item $\lim_{k \to \infty} D_h(z^{k+1},z^k) = 0$,
		\item there exist converging subsequences $(u^{k_l})_{l=0}^\infty$ and $(z^{k_l})_{l=0}^\infty$,
		\item the sequence $\min_{1 \leq k \leq N} D_h(z^{k+1},z^k)$ converges to $0$ with order $\mathcal{O}(\frac{1}{N})$.
	\end{itemize}
\end{corollary}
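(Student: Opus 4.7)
The plan is to chain the four items together by first using monotonicity plus the lower bound to control the function values, then telescoping the descent inequality from \cref{lem:descent} to obtain both a vanishing Bregman gap and the quantitative rate, and finally using coercivity to extract a convergent subsequence.

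First I would argue convergence of $(E(u^k))$. By the descent inequality \eqref{eq:descent}, and since the Bregman distance is nonnegative whenever $\tau<\tfrac{1}{L}$, the sequence $(E(u^k))$ is monotonically nonincreasing. Combined with the assumption that $E$ is bounded from below, the monotone convergence theorem yields a finite limit $E^*$.

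Next, for the second and fourth items I would sum \eqref{eq:descent} from $k=0$ to $N-1$. The left-hand side telescopes to $E(u^N)-E(u^0)$, which is bounded below by $E^*-E(u^0)$. Rearranging gives
\begin{equation*}
\sum_{k=0}^{N-1} D_h(z^{k+1},z^k) \leq \frac{\tau}{1-\tau L}\bigl(E(u^0)-E(u^N)\bigr) \leq \frac{\tau}{1-\tau L}\bigl(E(u^0)-E^*\bigr).
\end{equation*}
Since the right-hand side is finite and independent of $N$, the tail of the series vanishes; as each term is nonnegative, $D_h(z^{k+1},z^k)\to 0$. The rate follows because the minimum is bounded by the average: $\min_{1\leq k\leq N} D_h(z^{k+1},z^k) \leq \tfrac{1}{N}\sum_{k=0}^{N-1}D_h(z^{k+1},z^k) = \mathcal{O}(1/N)$.

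For the third item, note that all iterates satisfy $E(u^k)\leq E(u^0)$ by monotonicity, so the sequence $(u^k)$ is contained in the sublevel set $\{u: E(u)\leq E(u^0)\}$. By Assumption A, $E$ is coercive, so this sublevel set is bounded; it is also closed since $E$ is lower semi-continuous, hence compact. Thus $(u^k)$ admits a convergent subsequence $(u^{k_l})$ by Bolzano–Weierstrass, and since $\rho$ is continuous the corresponding $(z^{k_l}) = (\rho(u^{k_l}))$ also converges. The only mildly delicate point is handling the rate statement cleanly, but once the telescoping identity is in place this is immediate; no step looks like a serious obstacle because all the ingredients (monotone descent, coercivity, continuity of $\rho$, nonnegativity of $D_h$) have already been established.
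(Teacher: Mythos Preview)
Your proposal is correct and follows essentially the same approach as the paper: monotone convergence for the energy values, telescoping the descent inequality to get a summable sequence of Bregman gaps (hence both $D_h(z^{k+1},z^k)\to 0$ and the $\mathcal{O}(1/N)$ rate via the min-$\leq$-average trick), and compactness of a sublevel set via coercivity plus continuity of $\rho$ for the subsequence. The only cosmetic difference is the indexing range in your telescoping sum versus the index range in the $\min$, which you should align when you write it out.
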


\begin{proof}
	Summing both sides of the descent inequality in \cref{eq:descent} for $k=1,\dots,N$ and simplifying the expression gives
	\begin{equation*}
	\alpha \sum_{k=1}^{N} D_h(z^{k+1},z^k) \leq E(u^1) - E(u^N) \leq C
	\end{equation*}
	for $\alpha = \frac{1-\tau L}{\tau}>0$. 
	 $(E(u^k))_{k=1}^\infty$ is a monotone decreasing sequence, that is bounded as $-\infty < \inf E \leq E(u^k) \leq E(u^1)$ and thus converging. We gain the existence of converging subsequences $u^{k_l}$ due to these bounds and the lower semi-continuity and coercivity of $E$. The continuity of $\rho$ allows us to extend this to the existence of converging subsequences $z^{k_l}$. 
	 
	Concerning the convergence rate, define a minimal proximity over all iterates 
	\begin{equation*}
	\mu_N =: \min_{1\leq k \leq N} D_h(z^{k+1},z^k),
	\end{equation*}
	so that
	\begin{equation*}
	\sum_{k=1}^{N} D_h(z^{k+1},z^k) \leq \frac{E(u^1) - E(u^N)}{\alpha}
	\end{equation*}
	implies
	\begin{equation*}
	\mu_N \leq \frac{E(u^1) - E(u^N)}{\alpha N} \leq \frac{C}{ N} .
	\end{equation*}
	\phantom{1}
\end{proof}

For later use we define the set of all accumulation points of the sequence $u^k$, generated by our algorithm for a given starting vector $u^0$ as
\begin{equation}
\textnormal{accum}(u^0) = \lbrace u \in \R^n \ | \  \lim_{l \to \infty} u^{k_l} = u \text{ for a subsequence } u^{k_l} \textnormal{ of } u^k \rbrace.
\end{equation}
This set is non-empty as the sequence is bounded and closed as a set of limit points. 

\subsection{Convergence Properties}\label{sec:convergence_proof}
In this section we want to prove further statements of convergence. Up to now, we only gave assumptions on $G$ and on the relative minimization of the subproblems. For arbitrary $\rho$ and $R$ we can thus not expect a global convergence of the sequence of iterates, $u^k$, mostly because the accuracy up to which each subproblem is solved has not been specified yet. However, even if the subproblems are solved exactly, we need to assume some algebraic properties of $E$.

We will first give an appropriate optimality condition for our subproblems and specify the algebraic notion of 'tameness' discussed previously. Under these assumptions we will show a global convergence of the sequence $z^k=\rho(u^k)$ and convergence to critical points.
This is a natural convergence result as the distance of successive iterates is only measured relative to $D_h(\rho(u),\rho(u^k))$. It can be a conscious modeling choice to allow several equivalent critical points $u^*$ to be found by a single run of the algorithm. However we will also see that the choice of regularizer $R$ directly controls a global convergence in $u^k$, if the subproblems are solved to global optimality.

From now on, we consider a limiting subgradient:
\begin{definition}[Subgradients {\cite[8.3]{rockafellar_variational_2009}}]
	A function $E:\R^n \to \overline{\R}$ has the subgradient $v \in \R^n$ at a point $\bar{u} \in \text{dom }E$, if
	\begin{equation*}
	\liminf_{\substack{u \to \bar{u},\\u \neq \bar{u}}}	\frac{E(u) - E(\bar{u}) + \langle v,u - \bar{u}\rangle}{||u-\bar{u}||} \geq 0
	\end{equation*}
	and we write $v \in \hat{\partial}E(\bar{u})$. $v$ is further an element of the limiting subgradient $\partial E(\bar{u})$ at $\bar{u}$ if  sequences exist so that $u^i \to \bar{u}$, while $E(u^i) \to E(\bar{u})$ and $v^i \to v$ for $v^i \in \hat{\partial}E(u^i)$. 
\end{definition}
Note that in our case there exists some $\bar{u}$ for every $u^k \in \rho^{-1}(\intdom{h})$ so that $\partial E_{u^{k}}(\bar{u})$ is non-empty due to \cref{lem:well_posed} and Fermat's rule \cite[10.1]{rockafellar_variational_2009}. 
Rockafellar's optimality condition for limiting subgradients over a set \cite[8.15]{rockafellar_variational_2009} is a necessary condition for local minima. For our needs we consider the following version:
\begin{lemma}[Optimality Condition]\label{thm:fermat}
	If assumptions A hold and $\rho$ is continuously differentiable, then a local minimum of \cref{eq:the_problem} at $\bar{u} \in \rho^{-1}(\intdom{h})$ implies that
	\begin{equation}
	-J_\rho(\bar{u})^*\nabla G(\rho(\bar{u})) \in \partial R(\bar{u}),
	\end{equation}
\end{lemma}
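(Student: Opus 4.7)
The plan is to apply Fermat's rule to the limiting subdifferential of $E$ at $\bar u$ and then reduce the expression using smooth-calculus rules, exploiting that $G \circ \rho$ is $C^1$ in a neighborhood of $\bar u$.

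First I would argue that the set constraint $u \in \Delta = \rho^{-1}(C)$ plays no role locally. Since $\intdom{h} \subset C$ and $\intdom{h}$ is open, it is contained in $\operatorname{int} C$; by continuity of $\rho$, the set $\rho^{-1}(\intdom{h})$ is open, hence $\bar u$ lies in the interior of $\Delta$. Consequently, a local minimum of $E$ over $\Delta$ at $\bar u$ is also a local minimum of $E$ over $\R^n$, and Fermat's rule (\cite[8.15 / 10.1]{rockafellar_variational_2009}) gives $0 \in \hat\partial E(\bar u) \subset \partial E(\bar u)$.

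Next I would decompose $\partial E(\bar u) = \partial(G \circ \rho + R)(\bar u)$. Because $\rho$ is continuously differentiable in a neighborhood of $\bar u$ and $G$ is differentiable on $\intdom{h}$, the composition $F := G \circ \rho$ is $C^1$ on the open set $\rho^{-1}(\intdom{h})$ containing $\bar u$, with
\begin{equation*}
\nabla F(\bar u) = J_\rho(\bar u)^* \nabla G(\rho(\bar u))
\end{equation*}
by the classical chain rule. Since $F$ is smooth at $\bar u$ and $R$ is proper and lower semi-continuous, the sum rule for limiting subgradients (\cite[8.8(c)]{rockafellar_variational_2009}) applies as an equality:
\begin{equation*}
\partial E(\bar u) = \nabla F(\bar u) + \partial R(\bar u).
\end{equation*}

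Combining these two facts yields $0 \in J_\rho(\bar u)^* \nabla G(\rho(\bar u)) + \partial R(\bar u)$, i.e.\ the claimed inclusion. The only slightly delicate point is checking that the constraint $u \in \Delta$ is inactive at $\bar u$, which is why the hypothesis $\bar u \in \rho^{-1}(\intdom{h})$ (rather than merely $\bar u \in \Delta$) is essential; everything else reduces to routine applications of the smooth chain rule and the smooth-plus-lsc sum rule, so I do not expect any real obstacle beyond citing the correct statements.
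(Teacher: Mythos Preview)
Your argument is correct and follows essentially the same route as the paper's proof, which simply cites \cite[8.15]{rockafellar_variational_2009} (Fermat's rule) and \cite[8.8]{rockafellar_variational_2009} (sum rule) together with the observation that the constraint $u \in \rho^{-1}(\dom h)$ is inactive at $\bar u \in \rho^{-1}(\intdom h)$. You have merely spelled out in more detail why the constraint is inactive and why the smooth chain rule applies to $G\circ\rho$, which the paper leaves implicit.
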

\begin{proof}
	This follows from \cite[8.15]{rockafellar_variational_2009} and \cite[8.8]{rockafellar_variational_2009}, as the constraint $u \in \rho^{-1}(\dom{h})$ is not active for $\bar{u} \in \rho^{-1}(\intdom{h})$.
\end{proof}
We further call the set of all points $\bar{u}$ that fulfill this condition $\textnormal{crit} E$. 

However considering just the subdifferential of arbitrary functions leaves too many pathological cases for successful analysis of global convergence properties \cite{drusvyatskiy_slope_2013}. We thus follow recent literature on nonconvex optimization and consider functions that further satisfy the Kurdyka-\L ojasiewicz property:
\begin{definition}[Nonsmooth Kurdyka-\L ojasiewicz property \cite{bolte_lojasiewicz_2007}]
	For a proper and lower semi-continuous function $E:\R^n \to \overline{\R}$, we define its local Kurdyka-\L ojasiewicz property (K\L) 
	at a point $\bar{u} \in \textnormal{dom }E$ by the attribution that there exist $\eta > 0$, $\varphi \in C^0[0,\eta) \cap C^1(0,\eta)$ with $\varphi(0)=0$, $\varphi$ concave, $\varphi'>0$ and a neighborhood $U(\bar{u})$, so that
	\begin{equation*}
	\varphi'\left(E(u)-E(\bar{u})\right) \textnormal{dist}\left(0,\partial E(u)\right)  \geq 1,
	\end{equation*}
	for all $u \in U(\bar{u})$ with $E(\bar{u}) < E(u) < E(\bar{u})+\eta$.
\end{definition}
If $E$ is for example semi-algebraic, then it satisfies the K\L -property at any $\bar{u} \in \text{dom }E$. 
A proper semi-algebraic function $E:\R^n \to \R$ has a finite number of critical points \cite{drusvyatskiy_slope_2013}.
We note that any function definable in an o-minimal structure satisfies the K\L -property \cite{bolte_clarke_2007}.
Further, the property can be uniformized to yield
\begin{lemma}[{\cite[Lemma 6]{bolte_proximal_2014}}]\label{lem:KL}
	Let $\Omega$ be a compact set and consider a proper, lower semi-continuous function $E:\R^n \to \overline{\R}$. If $E$ is constant on $\Omega$ and satisfies the K\L -property at every point in $\Omega$, then there exist $\varepsilon > 0$, $\eta > 0$, $\varphi \in C^0[0,\eta) \cap C^1(0,\eta)$ with $\varphi(0)=0$, $\varphi$ concave, $\varphi'>0$ such that for all $\bar{u}$ in $\Omega$ the uniformized K\L -property,
	\begin{equation*}
	\varphi'\left(E(u)-E(\bar{u})\right) \textnormal{dist}\left(0,\partial E(u)\right) \geq 1,
	\end{equation*}
	holds for all $u \in \R^n$ with $dist(u,\Omega)< \varepsilon$ and $E(\bar{u}) < E(u) < E(\bar{u})+\eta$.
\end{lemma}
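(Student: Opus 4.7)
The plan is to start from the pointwise KL property at each $\bar{u} \in \Omega$, extract a finite covering of $\Omega$ by compactness, and then assemble the single desingularizing function $\varphi$ as a sum of the finitely many local ones. The crucial simplification that makes this work is the hypothesis that $E$ is constant on $\Omega$: call this common value $E^*$, so the inequality we want reads $\varphi'(E(u)-E^*)\operatorname{dist}(0,\partial E(u)) \geq 1$ for all $u$ in an $\varepsilon$-tube around $\Omega$ satisfying $E^* < E(u) < E^*+\eta$.

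First, I apply the pointwise KL property at each $\bar{u}\in\Omega$ to obtain constants $\eta_{\bar{u}}>0$, a radius $r_{\bar{u}}>0$ with $U(\bar{u})\supset B(\bar{u},r_{\bar{u}})$, and a function $\varphi_{\bar{u}}\in C^0[0,\eta_{\bar{u}})\cap C^1(0,\eta_{\bar{u}})$ with the required properties, such that the local KL inequality holds on $B(\bar{u},r_{\bar{u}})\cap\{u: E^*<E(u)<E^*+\eta_{\bar{u}}\}$. The open balls $\{B(\bar{u},r_{\bar{u}}/2)\}_{\bar{u}\in\Omega}$ cover the compact set $\Omega$, so I extract a finite subcover $B(\bar{u}_1,r_1/2),\dots,B(\bar{u}_N,r_N/2)$ with corresponding $\eta_1,\dots,\eta_N$ and $\varphi_1,\dots,\varphi_N$. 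Setting $\varepsilon := \tfrac{1}{2}\min_i r_i$ ensures that every $u$ with $\operatorname{dist}(u,\Omega)<\varepsilon$ lies within $r_i/2$ of some point of $\Omega$, hence within $r_i$ of $\bar{u}_i$ for some index $i$, so $u\in B(\bar{u}_i,r_i)$.

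Next I set $\eta := \min_i \eta_i$ and define $\varphi(s) := \sum_{i=1}^N \varphi_i(s)$ on $[0,\eta)$. This $\varphi$ inherits all the demanded properties coordinate-wise: $\varphi(0)=0$, $\varphi$ is $C^0[0,\eta)\cap C^1(0,\eta)$, concave as a sum of concave functions, and $\varphi'=\sum_i\varphi_i'>0$. Now fix any $\bar{u}\in\Omega$ and any $u$ with $\operatorname{dist}(u,\Omega)<\varepsilon$ and $E^*<E(u)<E^*+\eta$. By the subcover construction, $u\in B(\bar{u}_{i_0},r_{i_0})$ for some $i_0$, and since $\eta\leq\eta_{i_0}$ the pointwise KL property at $\bar{u}_{i_0}$ applies to $u$, giving $\varphi'_{i_0}(E(u)-E^*)\operatorname{dist}(0,\partial E(u))\geq 1$. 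Because every $\varphi'_j\geq 0$, the total derivative satisfies $\varphi'(E(u)-E^*)\geq \varphi'_{i_0}(E(u)-E^*)$, and since $E(\bar{u})=E^*$ the required uniformized inequality follows.

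The only genuine obstacle I anticipate is the bookkeeping around the function $\varphi$: one must verify that summing the finitely many local desingularizers preserves concavity and the regularity $C^0[0,\eta)\cap C^1(0,\eta)$ simultaneously, and that the resulting $\varphi'$ dominates each individual $\varphi'_i$ on the common subinterval $(0,\eta)$. Concavity and regularity are stable under finite sums, and non-negativity of each $\varphi'_i$ yields the pointwise domination, so the construction goes through without further technicalities.
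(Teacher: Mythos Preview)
The paper does not give its own proof of this lemma; it is quoted verbatim from \cite[Lemma 6]{bolte_proximal_2014} and used as a black box. Your argument is correct and is precisely the standard proof from that reference: compactness yields a finite subcover, one takes the minimum of the $\eta_i$ and a suitable $\varepsilon$ from the radii, and the uniform desingularizing function is the sum $\varphi=\sum_i\varphi_i$, which dominates each $\varphi_{i_0}'$ because all summands have nonnegative derivative.
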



Now we are ready to collect our set of assumptions.

{ \bf Assumptions B:}
\begin{itemize}
	\item The function $E$ is continuous on its domain and satisfies the K\L-property at every point in the set $\textnormal{accum}(u^0)$,
	\item $h$ is $m$-strongly convex on $\intdom{h}$, 
	\item $\rho:\R^n \to \R^m$ is continuously differentiable,
	\item $\frac{1}{\tau}\nabla h- \nabla G$ is locally Lipschitz continuous on $\intdom{h}$,
	\item Given the set $Z = (z^k)_{k=1}^\infty$, we require that $\bar{Z} \subset \intdom{h}$
	\item every iteration satisfies $0 \in \partial E_{u^k}(u^{k+1})$ and uses a step size $\tau < \frac{1}{L}$.	
\end{itemize}
These extended assumptions now allow us to prove the following statements:

\begin{enumerate}
	\item The sequence of $z^k=\rho(u^k)$ converges globally to a value $z^*$.
	\item All accumulation points of subsequences $(u^{k_l})_{i=1}^\infty$ are stationary points of $E$.
	\item There is a correspondence between the limit point $z^*$ and the accumulation points of the iterates, given by $z^* = \rho(u^*)$ for all $ u^* \in \text{accum}(u^0)$. 
\end{enumerate}


We will see in the proof that the most demanding properties in assumptions B are used to prove a bound on the slope of iterates, i.e. the inequality $\textnormal{dist}\left(0,\partial E(u^{k+1})\right) \leq c ||z^{k+1}-z^k||$ for some $c>0$. Norm convergence to limit points lying on the boundary of $\dom{h}$ from arguments involving the K\L-property is problematic, as the essential smoothness of $h$ implies that $||\nabla h(y^k)|| \to \infty$ for $y^k \to y^* \in (\dom{h}\setminus\intdom{h})$ \cite{bauschke_essential_2001} so that there will be no fixed bound $c$. By requiring $\bar{Z} \subset \intdom{h}$ we strengthen the assumption of $z^k \in \intdom{h}$ from assumption A to the assumption that any prospective limit point $z^*$ will also fulfill $z^* \in \intdom{h}$.
In comparison to \cite{bolte_first_2018}, the assumption $\dom{h} = \R^m$ given therein is a straightforward implication of our more technical statement. Our assumption on the continuity of $E$ and replaces their assumption that $E_{z^k}(z^{k+1})\leq E_{z^k}(z) \, \forall z \in \intdom{h}$ in this subsection.

The ingredients of our proof follow recent literature, e.g. \cite{attouch_convergence_2013,bolte_first_2018}, however special care has to be taken as all estimates of slope and objective value are only relative to the outer sequence of $z^k=\rho(u^k)$.
\begin{lemma}[Slope bound]\label{lem:slope}
	If the assumptions A and B hold, then $c<\infty$ exists, so that
	\begin{equation}\label{eq:slope_bound}
	\textnormal{dist}(0,\partial E(u^{k+1})) \leq c ||z^{k+1}-z^k|| \quad \forall k\in \N.
	\end{equation}

\end{lemma}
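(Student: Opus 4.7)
The plan is to exploit the optimality condition $0 \in \partial E_{u^k}(u^{k+1})$ to extract an explicit element of $\partial R(u^{k+1})$, then build a corresponding element of $\partial E(u^{k+1})$ whose norm we can bound in terms of $\|z^{k+1}-z^k\|$ using the postulated local Lipschitz continuity of $\frac{1}{\tau}\nabla h - \nabla G$.

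First, since $\rho$ is continuously differentiable and the composition $u \mapsto \frac{1}{\tau}D_h(\rho(u),\rho(u^k)) + \langle \nabla G(\rho(u^k)),\rho(u)\rangle$ is $C^1$ on a neighborhood of $u^{k+1}$ (using $z^{k+1} \in \intdom{h}$ and essential smoothness of $h$), the standard sum/chain rules for the limiting subdifferential \cite[10.6, 10.9]{rockafellar_variational_2009} applied to the inclusion $0 \in \partial E_{u^k}(u^{k+1})$ yield
\begin{equation*}
-J_\rho(u^{k+1})^*\!\left[\tfrac{1}{\tau}\bigl(\nabla h(z^{k+1})-\nabla h(z^k)\bigr) + \nabla G(z^k)\right] \;\in\; \partial R(u^{k+1}).
\end{equation*}
Adding $J_\rho(u^{k+1})^*\nabla G(z^{k+1})$ to both sides and applying the same sum/chain calculus to $E = G\circ\rho + R$ produces the candidate element
\begin{equation*}
w^{k+1} \;=\; J_\rho(u^{k+1})^*\!\left[\bigl(\nabla G(z^{k+1})-\nabla G(z^k)\bigr) - \tfrac{1}{\tau}\bigl(\nabla h(z^{k+1})-\nabla h(z^k)\bigr)\right] \;\in\; \partial E(u^{k+1}),
\end{equation*}
which we rewrite as $w^{k+1} = -J_\rho(u^{k+1})^*\!\bigl[\Phi(z^{k+1})-\Phi(z^k)\bigr]$ with $\Phi := \tfrac{1}{\tau}\nabla h - \nabla G$.

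Next I would control the two factors. From \cref{lem:descent} and the descent of $E(u^k)$, the iterates $u^k$ lie in a sublevel set of $E$, which is bounded by coercivity; hence $\{u^k\}$ has compact closure and, by continuity of $J_\rho$, $M := \sup_k \|J_\rho(u^{k+1})\| < \infty$. For the bracket, the hypothesis $\bar Z \subset \intdom{h}$ means that all $z^k$ lie in a compact subset $K$ of the open set $\intdom{h}$, on which $\Phi$ is (locally) Lipschitz with some constant $L'$. Therefore
\begin{equation*}
\|w^{k+1}\| \;\leq\; M\,L'\,\|z^{k+1}-z^k\|,
\end{equation*}
and since $w^{k+1}\in\partial E(u^{k+1})$, taking $c := ML'$ gives $\operatorname{dist}(0,\partial E(u^{k+1})) \leq c\,\|z^{k+1}-z^k\|$ as claimed.

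The main technical obstacle is guaranteeing the sum/chain rules for $\partial E_{u^k}$ and for $\partial E$ are exact at the points of interest. Because $R$ is merely lsc, one needs the non-smooth parts to interact safely with the smooth ones; this is exactly where the assumptions $\rho \in C^1$, $z^{k+1} \in \intdom{h}$ (so that $\nabla h$ is continuous there), and the essential smoothness of $h$ are used, ensuring that all non-$R$ summands are $C^1$ near $u^{k+1}$ and the additivity rule holds. The strong-convexity hypothesis on $h$ is not invoked here but will be needed elsewhere; the compactness $\bar Z \subset \intdom{h}$ is what prevents the blow-up $\|\nabla h(z)\|\to\infty$ that would otherwise destroy a uniform constant $c$.
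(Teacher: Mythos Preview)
Your proposal is correct and follows essentially the same route as the paper: you invoke the optimality condition of the subproblem to extract an element of $\partial R(u^{k+1})$, shift by $J_\rho(u^{k+1})^*\nabla G(z^{k+1})$ to obtain an explicit element of $\partial E(u^{k+1})$, and then bound its norm by the product of $\sup_k\|J_\rho(u^{k+1})\|$ (finite by compactness of the iterates and continuity of $J_\rho$) and a Lipschitz constant for $\frac{1}{\tau}\nabla h-\nabla G$ on the compact set $\bar Z\subset\intdom{h}$. Your additional remarks on the validity of the sum/chain rules (via the $C^1$ nature of the non-$R$ summands near $u^{k+1}$) make explicit what the paper leaves implicit, but the argument is the same.
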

\begin{proof}
	Consider the optimality condition of the update equation, as all subproblems are solved exactly:
	\begin{equation*}
	0 \in \partial R(u^{k+1}) + (J_\rho(u^{k+1}))^* \left(\nabla G(\rho(u^k))+\frac{1}{\tau}\nabla h(\rho(u^{k+1})) -\frac{1}{\tau}\nabla h(\rho(u^{k})) \right)
	\end{equation*}
	and reformulate to 
	\begin{align*}
	 (J_\rho(u^{k+1}))^* \biggl( &\nabla G(\rho(u^{k+1})) -\nabla G(\rho(u^k)) \\ &-\frac{1}{\tau}\nabla h(\rho(u^{k+1})) +\frac{1}{\tau}\nabla h(\rho(u^{k})) \biggr) \in \partial R(u^{k+1}) + (J_\rho(u^{k+1}))^* \left(\nabla G(\rho(u^{k+1})\right).
	\end{align*}
	Now we see that the left hand side is an element of $\partial E(u^{k+1})$, so that we can estimate its norm by
	\begin{equation*}
	\textnormal{dist}(0,\partial E(u^{k+1})) \leq \left\Vert J_\rho(u^{k+1})^*\right\Vert_{\operatorname{op}} \left\Vert\nabla \left(\frac{1}{\tau}h-G\right)(z^{k+1})-\nabla \left(\frac{1}{\tau}h-G\right)(z^{k})\right\Vert,
	\end{equation*}
	where we have denoted the induced operator norm  of $||\cdot||$ by $||\cdot||_{\operatorname{op}}$.
	By \cref{lem:descent} and the coerciveness of $E$, we know that $(z^k)_{k=1}^\infty$ is a compact subset of $\dom{h}$. Assumption B then guarantees that the sequence is further contained in $\intdom{h}$. This allows us to extend the local Lipschitz continuity of $\frac{1}{\tau}h-G$, also from assumption B, to Lipschitz continuity on this compact set. 
	Furthermore, we assumed $\rho \in C^1(\R^n,\R^m)$ which implies that its Jacobian $||J_\rho(u)^*||_{\operatorname{op}}$ is also Lipschitz on the compact set $\bar{U}$ for $U = (u^k)_{k=1}^\infty$. These properties allow us to find a fixed constant $c$ so that the inequality \cref{eq:slope_bound} holds for all $k \in \N$.
\end{proof}

Before we now come to the main theorem, we first collect a few properties of the set $\textnormal{accum}(u^0)$, that will allow the application of \cref{lem:KL}.

\begin{lemma}\label{lem:KLprep}
$E$ is constant and finite on $\textnormal{accum}(u^0)$, i.e. $E(u) = E(v) < \infty \ \forall u,v \in \textnormal{accum}(u^0)$ and we have
\begin{equation}\label{eq:distance_to_stat}
\lim_{k \to \infty} \textnormal{dist}(u^k,\textnormal{accum}(u^0)) = 0.
\end{equation}	
\end{lemma}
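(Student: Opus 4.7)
The plan is to establish both assertions from what we already have in Corollary~\ref{cor:descent} plus the continuity and coercivity of $E$, without invoking the K\L-property yet.

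For the first assertion, I would start from Corollary~\ref{cor:descent}, which tells us that $(E(u^k))_{k\in\N}$ is monotone decreasing and converges to some limit $E^*$. Since $E$ is bounded below and $E(u^1)<\infty$ (as $u^0$, and hence $u^1$, lies in $\dom E$ by assumption), $E^*$ is finite. For any $u^*\in\textnormal{accum}(u^0)$ pick a subsequence $u^{k_l}\to u^*$. Lower semi-continuity together with $E(u^{k_l})\to E^*<\infty$ gives $E(u^*)\le E^*<\infty$, hence $u^*\in\dom E$. Now the continuity assumption of B on $\dom E$ applies along the subsequence: $E(u^{k_l})\to E(u^*)$, so $E(u^*)=E^*$. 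As $u^*$ was arbitrary, $E$ is constant and equal to the finite value $E^*$ on $\textnormal{accum}(u^0)$.

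For the second assertion, the argument is the classical one for bounded sequences. First I would observe that the full sequence $(u^k)_{k\in\N}$ is bounded: since $E(u^k)\le E(u^1)$ and $E$ is coercive, the iterates stay in a sublevel set, which is bounded. Then I would proceed by contradiction: assume $\limsup_{k\to\infty}\textnormal{dist}(u^k,\textnormal{accum}(u^0))=\delta>0$. One can extract a subsequence $(u^{k_l})_l$ with $\textnormal{dist}(u^{k_l},\textnormal{accum}(u^0))\ge \delta/2$ for all $l$. By boundedness and Bolzano–Weierstrass, this subsequence admits a further convergent sub-subsequence whose limit $\tilde u$ belongs to $\textnormal{accum}(u^0)$ by definition. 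But then $\textnormal{dist}(u^{k_l},\textnormal{accum}(u^0))\to 0$ along that sub-subsequence, contradicting the lower bound $\delta/2$.

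I do not anticipate a main obstacle here; both statements are soft-analysis consequences of the basic descent lemma and the continuity/coercivity hypotheses that were already in place. The only subtle point is ensuring $u^*\in\dom E$ before invoking continuity, which is handled by combining lower semi-continuity of $E$ with the boundedness of $(E(u^k))_k$. These two conclusions set up $\textnormal{accum}(u^0)$ as a compact set on which $E$ takes a constant finite value and toward which the whole iterate sequence converges in distance, which is precisely the setting required to invoke the uniformized K\L-property of Lemma~\ref{lem:KL} in the main convergence theorem.
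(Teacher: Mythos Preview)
Your proposal is correct and follows essentially the same approach as the paper: the paper also derives boundedness of $(u^k)$ from Corollary~\ref{cor:descent}, uses the continuity of $E$ from Assumption~B along any convergent subsequence to identify $E(u^*)=E^*$, and then notes that \cref{eq:distance_to_stat} holds for any bounded sequence. Your write-up is in fact more careful than the paper's, explicitly justifying $u^*\in\dom E$ via lower semi-continuity before invoking continuity, and spelling out the Bolzano--Weierstrass contradiction for the distance claim that the paper simply asserts.
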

\begin{proof}
	$(u_k)_{k=1}^\infty$ is bounded due to \cref{cor:descent}. We choose a subsequence $(u^{k_l})$ with $\lim_{l\to\infty}u^{k_l}=u^*$. 
	From the continuity of $E$ on $(u^{k_l})$ we infer $\lim_{l \to \infty} E(u^{k_l}) = E(u^*)$.
	We further know from \cref{cor:descent} that the sequence of function values itself is convergent to some value $E^*$, so that $E$ is finite and constant on all these limit points.
	\Cref{eq:distance_to_stat} is true for all bounded sequences.
\end{proof}

The following proof is now a slight adaptation of usual strategies for convergence under the K\L -property \cite{attouch_convergence_2013} or \cite{bolte_first_2018,ochs_unifying_2016}, with the difference that we apply the K\L-property to the set $\textnormal{accum}(u^0)$ instead of the set of critical points, which nevertheless fulfills $E(u^*) < E(u^k) < E(u^*) +\eta$ for any $u^* \in \textnormal{accum}(u^0)$ as required in \cref{lem:KL}, due to the monotone descent of the algorithm. We then apply our previous results and find a global convergence in $z^k = \rho(u^k)$.

\begin{theorem}[Global Convergence]\label{thm:global}
	Under the assumptions A and B, the sequence $z^k$ either has finite length, $\sum_{k=1}^\infty ||z^{k+1}-z^k|| < \infty$, and converges to a limit $z^* \in \intdom{h}$, or can be terminated after a finite number of steps.
\end{theorem}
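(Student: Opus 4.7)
The plan is to follow the standard Kurdyka--\L ojasiewicz recipe (as in \cite{attouch_convergence_2013,bolte_first_2018}) but carried out entirely in terms of the outer sequence $z^k=\rho(u^k)$, since that is where the descent inequality of \cref{lem:descent} and the slope bound of \cref{lem:slope} are both expressed. First I would dispose of the trivial case: if at some iteration $D_h(z^{k+1},z^k)=0$, the $m$-strong convexity of $h$ on $\intdom h$ forces $z^{k+1}=z^k$, and then \cref{lem:slope} gives $0\in\partial E(u^{k+1})$, so the algorithm may be terminated. Otherwise assume the descent is strict at every step.

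Next I would assemble the three ingredients. The descent inequality together with $m$-strong convexity of $h$ gives
\begin{equation*}
E(u^k)-E(u^{k+1})\;\geq\;\frac{1-\tau L}{\tau}D_h(z^{k+1},z^k)\;\geq\;\frac{(1-\tau L)m}{2\tau}\|z^{k+1}-z^k\|^2.
\end{equation*}
The slope bound of \cref{lem:slope} gives $\operatorname{dist}(0,\partial E(u^{k+1}))\leq c\|z^{k+1}-z^k\|$. Finally, by \cref{lem:KLprep}, $E$ is finite and constant (with value $E^*$, say) on the compact set $\operatorname{accum}(u^0)$, and $\operatorname{dist}(u^k,\operatorname{accum}(u^0))\to 0$; combined with monotone descent of $E(u^k)\searrow E^*$ and continuity of $E$, this guarantees that for all $k\geq k_0$ large enough, $u^k$ lies in the $\varepsilon$-neighbourhood from \cref{lem:KL} and satisfies $E^*<E(u^k)<E^*+\eta$. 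Hence the uniformized K\L inequality applies:
\begin{equation*}
\varphi'\bigl(E(u^k)-E^*\bigr)\,\operatorname{dist}\bigl(0,\partial E(u^k)\bigr)\geq 1.
\end{equation*}

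Writing $\Delta_k:=\varphi(E(u^k)-E^*)-\varphi(E(u^{k+1})-E^*)$ and using concavity of $\varphi$ together with the descent inequality yields
\begin{equation*}
\Delta_k\;\geq\;\varphi'(E(u^k)-E^*)\bigl(E(u^k)-E(u^{k+1})\bigr)\;\geq\;\frac{1}{\operatorname{dist}(0,\partial E(u^k))}\cdot\frac{(1-\tau L)m}{2\tau}\|z^{k+1}-z^k\|^2.
\end{equation*}
Substituting the slope bound $\operatorname{dist}(0,\partial E(u^k))\leq c\|z^k-z^{k-1}\|$ and rearranging gives
\begin{equation*}
\|z^{k+1}-z^k\|^2\;\leq\;C\,\|z^k-z^{k-1}\|\,\Delta_k
\end{equation*}
for a constant $C$. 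Applying the AM--GM inequality $\sqrt{ab}\leq\tfrac14 a+b$ to the right-hand side produces
\begin{equation*}
\|z^{k+1}-z^k\|\;\leq\;\tfrac12\|z^k-z^{k-1}\|+C'\Delta_k,
\end{equation*}
which I would then sum from $k=k_0$ to $N$. The telescoping sum $\sum\Delta_k\leq\varphi(E(u^{k_0})-E^*)<\infty$ combined with the absorption of the $\tfrac12\|z^k-z^{k-1}\|$ term shows $\sum_{k=1}^\infty\|z^{k+1}-z^k\|<\infty$, so $(z^k)$ is Cauchy and converges to some $z^*$. Finally, $z^*\in\intdom h$ is immediate from the assumption $\bar Z\subset\intdom h$ in B.

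The main obstacle is the step where slope and descent are glued together: both natural estimates live on the $z$-side rather than the $u$-side, and without the $m$-strong convexity of $h$ on $\intdom h$ one could not pass from $D_h(z^{k+1},z^k)$ to $\|z^{k+1}-z^k\|^2$, while without the compactness of $\bar Z\subset\intdom h$ the local Lipschitz constant of $\nabla(\tfrac1\tau h-G)$ could blow up near $\partial\dom h$ and the constant $c$ in \cref{lem:slope} would fail to be uniform. These are precisely the technical strengthenings made in assumptions B, and I would emphasize that they enter here and nowhere else in the argument.
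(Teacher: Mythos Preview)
Your proposal is correct and follows essentially the same route as the paper's proof: handle the finite-termination case via $z^{k+1}=z^k$ and \cref{lem:slope}, then combine the descent inequality (with $m$-strong convexity of $h$), the slope bound, and the uniformized K\L\ inequality on $\operatorname{accum}(u^0)$ to derive the recursive estimate $\|z^{k+1}-z^k\|^2\leq C\|z^k-z^{k-1}\|\Delta_k$, and finish by AM--GM and telescoping. The only cosmetic difference is your use of the form $\sqrt{ab}\leq\tfrac14 a+b$ (giving an absorption factor $<1$ on the right) instead of the paper's $2\sqrt{ab}\leq a+b$; both lead to the same conclusion.
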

\begin{proof}
	 To apply the K\L-property, we need to verify that $E(u^*)< E(u^k)$ for all indices $k$ that we consider and accumulation points $u^* \in \textnormal{accum}(u^0)$. \cref{lem:descent} shows that $E(u^{k+1}) \leq E(u^k)$, due to the convexity of $h$. Now if for some index $l$ we have $E(u^k) = E(u^*)$, then the monotonicity of the sequence of objective values implies $E(u^{k+1}) = E(u^k)$ for the next index $k+1$. Together with \cref{lem:descent} this implies $D_h(z^{k+1},z^k) = 0$ and by the strong convexity of the assumptions B, $z^{k+1} = z^k$. Furthermore it is possible that iterates fulfill $z^{k+1} = z^k$ without fulfilling $E(u^{k+1}) = E(u^k)$, as $\rho$ is not bijective. In both cases, the algorithm can be terminated, as \cref{lem:slope} implies that $0 \in \partial E(u^{k+1})$. 
	 
	 Now, conversely, assume that the algorithm does not terminate after a finite number steps. We may then choose $l \in \N$ large enough so that both $E^* < E(u^l)<E^*+\eta$ and $\text{dist}(u^l,\textnormal{accum}(u^0))<\varepsilon$ are fulfilled. The positive constants $\varepsilon$ and $\eta$ are the ones required by the K\L-property of $E$ w.r.t to the set $\text{accum}(u^0)$. On this set, $E$ is constant and finite, as discussed in \cref{lem:KLprep}.
	From \cref{lem:KL}, we then find that
	\begin{equation*}
	\varphi'\left(E(u^k)-E^*\right)\textnormal{dist}\left(0,\partial E(u^k)\right) \geq 1
	\end{equation*}
	holds for any accumulation point $u^* \in \textnormal{accum}(u^0)$, as $E(u^*)=E^*$ and for all $u^k$ with $k>l$. Now we can apply \cref{lem:slope} to find that 
	\begin{equation}\label{eq:sloping}
	\varphi'\left(E(u^k)-E^*\right) \geq \frac{1}{c||z^k-z^{k-1}||}.
	\end{equation}
	Further, we can consider the descent from \cref{lem:descent} and apply that $h$ is $m$-strongly convex to obtain
	\begin{equation}\label{eq:values}
	E(u^k)-E(u^{k+1}) \geq \frac{1-\tau L}{\tau}D_h(z^{k+1},z^k) \geq \frac{m(1-\tau L)}{2\tau}||z^{k+1}-z^k||^2.
	\end{equation}
	Analogously to \cite{attouch_convergence_2013,bolte_first_2018}, we use the concavity of $\varphi$ to analyze the difference of function values in $\varphi$:
	\begin{align*}
	\Delta_{k,k+1} &=:\varphi\left(E(u^k)-E^*\right) - \varphi\left(E(u^{k+1})-E^*\right) \\
	&\geq \varphi'\left(E(u^k)-E^*\right) \left( E(u^{k})-E(u^{k+1}) \right).
	\end{align*}
	Inserting \cref{eq:sloping} and \cref{eq:values} and denoting constant terms by $c'$ we gain
	\begin{equation*}
	\Delta_{k,k+1} \geq \frac{||z^{k+1}-z^k||^2}{c'||z^k-z^{k-1}||} .
	\end{equation*}
	 Now we are entirely in the setting of \cite[Theorem 6.2]{bolte_first_2018} and likewise reformulate to 
	 \begin{equation*}
		 2\sqrt{||z^k-z^{k-1}|| c'\Delta_{k,k+1}} \geq 2||z^{k+1}-z^k||
	 \end{equation*}
	 and use the inequality $2\sqrt{ab}\leq a+b$ to gain
	 \begin{equation*}
	 2 ||z^{k+1}-z^k|| \leq ||z^k -z^{k-1}|| + c'\Delta_{k,k+1}.
	 \end{equation*}
	 Summing these inequalities for $k=l+1,\dots,n$, then yields
	 \begin{align*}
	 2 \sum_{k=l+1}^n ||z^{k+1}-z^k|| \leq& \sum_{k=l+1}^n ||z^{k}-z^{k-1}||+c'\sum_{k=l+1}^n \Delta_{k,k+1} \\
	 =&  \sum_{k=l+1}^n ||z^{k+1}-z^{k}||-||z^{n+1}-z^n||+||z^{l+1}-z^l|| + c'\sum_{k=l+1}^n \Delta_{k,k+1}\\
	 \leq&  \sum_{k=l+1}^n ||z^{k+1}-z^{k}|| +||z^{l+1}-z^l|| +c'\Delta_{l+1,n+1},
	 \end{align*}
	 where the last inequality is gained by telescoping all $\Delta$. Reinserting the definition of $\Delta_{l+1,n+1}$ and simplifying then results in
	 \begin{equation*}
	 \sum_{k=l+1}^n ||z^{k+1}-z^k|| \leq ||z^{l+1}-z^l|| + c' \varphi(z^{l}-z^{l+1})-c'\varphi(z^{n}-z^{n+1}) < \infty
	 \end{equation*}
	 As $\varphi$ is positive this implies that the whole sequence $z^k$ is a Cauchy sequence and converges due to metric completeness. 
\end{proof}

\begin{remark}[Strong convexity of $h$]
	The $m$-strong convexity might seem like a limiting assumption, yet it is always possible to construct a function $\tilde{h}$ that fulfills this property, if $G$ is L-smooth adaptable for some $h$ (see also the related discussion in \cite{bolte_first_2018}). First if $Lh-G$ is convex, then $L(h+w)-(G+Lw)$ is also convex for any function $w$, especially $w = \frac{m}{2}||\cdot||^2$. Define $\tilde{h} = h+w$, $\tilde{G} = G+Lw$, and $\tilde{R} = R-L (w \circ \rho)$. Now the new energy $\tilde{G} \circ \rho + \tilde{R}$ is equal to the old formulation, but the pair $(\tilde{G},\tilde{h})$ is convex with $\tilde{h}$ being $m$-strongly convex. However we remark that the new function $\tilde{h}$ might make it more difficult to solve the resulting subproblem. 
\end{remark}

From the convergence result of \cref{thm:global} on the sequence $z^k$, we can return to $u^k$:
\begin{corollary}[Convergence to critical points]\label{cor:orbits}
	All accumulation points $u^* \in \textnormal{accum}E$ of $(u^k)_{k=1}^\infty$ are stationary points of $E$, i.e. $u^* \in \textnormal{crit}E$ and belong to the same outer sequence $z^k$ so that $z^*=\rho(u^*)$. 
\end{corollary}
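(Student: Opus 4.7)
The claim decomposes into two assertions: (i) each accumulation point $u^*$ of $(u^k)$ satisfies $\rho(u^*) = z^*$, where $z^*$ is the unique limit from Theorem on Global Convergence, and (ii) each such $u^*$ is a stationary point of $E$. I will handle them separately, with (i) being essentially a continuity argument and (ii) resting on the slope bound together with outer semicontinuity of the limiting subdifferential.

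For (i), fix any accumulation point $u^* \in \textnormal{accum}(u^0)$ and a subsequence $u^{k_l} \to u^*$. Theorem on Global Convergence asserts that the whole sequence $z^k = \rho(u^k)$ converges to some $z^* \in \intdom{h}$ (or the algorithm terminates finitely, in which case the claim is trivial). Since $\rho$ is continuous by the basic assumptions, $\rho(u^*) = \lim_l \rho(u^{k_l}) = \lim_l z^{k_l} = z^*$, which is the desired identification and shows simultaneously that all accumulation points share the common outer limit.

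For (ii), I would invoke the slope bound (Lemma on Slope bound) shifted by one index, namely $\textnormal{dist}(0, \partial E(u^k)) \leq c\,\|z^k - z^{k-1}\|$ for every $k \geq 2$. Convergence of $z^k$ implies $\|z^k - z^{k-1}\| \to 0$, so the distance of the origin to $\partial E(u^k)$ vanishes along the entire sequence, not just a subsequence. Restricting to the subsequence $u^{k_l} \to u^*$, pick $v^{k_l} \in \partial E(u^{k_l})$ with $\|v^{k_l}\| \to 0$; this is possible because $\partial E(u^{k_l})$ is non-empty for our iterates. By the continuity of $E$ on its domain from Assumptions B, $E(u^{k_l}) \to E(u^*)$. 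Then the definition of the limiting subdifferential (via the triple convergence $u^{k_l} \to u^*$, $E(u^{k_l}) \to E(u^*)$, $v^{k_l} \to 0$) yields $0 \in \partial E(u^*)$, i.e.\ $u^* \in \textnormal{crit}E$ in the sense of Lemma on the Optimality Condition.

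The main obstacle I anticipate is a bookkeeping one rather than a conceptual one: one must be careful that the subsequence giving $u^{k_l} \to u^*$ is the same subsequence along which $v^{k_l} \in \partial E(u^{k_l})$ converges to $0$, and that $E(u^{k_l}) \to E(u^*)$ genuinely holds. The first is automatic because the slope bound provides a sequence of subgradients for every index, so one simply evaluates along the chosen $k_l$; the second uses the continuity of $E$ from Assumptions B, which was included precisely to make this limiting subdifferential closure step go through. No additional structure beyond what is already assumed is needed.
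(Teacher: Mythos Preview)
Your proposal is correct and follows essentially the same route as the paper: use the slope bound together with $\|z^{k}-z^{k-1}\|\to 0$ to force $\textnormal{dist}(0,\partial E(u^k))\to 0$, invoke $E(u^{k_l})\to E(u^*)$ to close the limiting subdifferential, and use continuity of $\rho$ for the identification $\rho(u^*)=z^*$. Your treatment is in fact slightly more explicit than the paper's about selecting the subgradients $v^{k_l}$ and justifying the $E$-attentive convergence, but the structure and the ingredients are identical.
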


\begin{proof}
	Combining the bound on the slope in \cref{lem:slope} and  global convergence of $z^k$'s from \cref{thm:global} we immediately see that $\textnormal{dist}(0,\partial E(u^{k+1})) \to 0$ as $k \to \infty$. Furthermore, we know that $E(u^k) \to E(u^*)=E^*$ so  that all subsequences of $u^k$ fulfill the definition of the limiting subdifferential and $0 \in \partial E(u^*)$.  	
	We know also that $\lim_{k \to \infty} z^k = \lim_{k \to \infty} \rho(u^k) = z^*$. Let $u^* \in \textnormal{accum}(u^0)$ be arbitrary with the sequence by $u^{k_l} \to u^*$. Due to continuity of $\rho$ we have $\rho(u^*) = \lim_{l \to \infty} \rho(u^{k_l})  = z^*$.
\end{proof}
This result shows the connection between the 'auxiliary' outer sequence of gradient steps $z^k$, which converges globally, due to the K\L-property and the sequence of actual update steps $u^k$. The algorithm converges to a stationary point of $E$ and all accumulation points not only have the same value in $E$, but also in $G \circ \rho$, as $G(\rho(u^*)) = G(z^*) \ \forall u^* \in \textnormal{accum}(u^0)$.

\subsection{Global Convergence of the inner sequence}
A necessary consequence of the previous subsection is that all accumulation points have an equal value $R^*$ in $R$, hence are elements of the set $S = \lbrace u \in \R^n \ | \ R(u)=R^*,\ \rho(u) = z^*\rbrace$. Naturally, if this set is a singleton, then the subsequence convergence of the sequence $(u^k)_{k=1}^\infty$ extends to global convergence.
The cardinality of the set $C$ is however difficult to check a-priori. Nevertheless it turns out that if we finally also assume that the nonconvex subproblems are solved globally, then the convergence result follows from the familiar notion of uniqueness of $R$-minimizing solutions. We further remark that the assumption of global solutions to subproblems also allows us to weaken the continuity assumption made in Assumption B to lower semi-continuity of $E$.

Let us define $R$-minimizing in the following way, as given for example in \cite[Def 3.24]{scherzer_variational_2009}:  
\begin{definition}
	A vector $u^* \in \R^n$ is called $R$-minimizing with respect to a solution set $\{u\in \R^n ~|~ F(u)=v\}$ of an operator $F:\R^n \to \R^m$ and a vector $v \in \operatorname{Im}(F)$, if $F(u^*) = v$ and 
	\begin{equation*}
	R(u^*) \in  \min \left\lbrace R(u) \ | \ u \in \R^n, F(u) = v \right\rbrace .
	\end{equation*} 
\end{definition}

Now the uniqueness of such a vector directly corresponds to global convergence if the subproblems are solved globally:
\begin{theorem}[Global Convergence]
	If the subproblems are solved to global optimality, i.e. $u^{k+1}$ fulfills $E_{u^k}(u^{k+1})\leq E_{u^k}(u) \ \forall u \in \R^n$ and all assumptions hold, then 
	\begin{equation}
		u^* \in \textnormal{accum}(u^0)  ~ \Rightarrow ~ u^* \textnormal{ is $R$-minimizing w.r.t $\rho(u) = z^*$. }
	\end{equation}
	In particular, if the $R$-minimizing element w.r.t. $\rho(u) = z^*$ is unique, then the sequence of iterates $u^k$ converges globally.
\end{theorem}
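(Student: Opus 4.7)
The plan is to leverage the global optimality of the subproblems together with the already-established convergence $z^k \to z^*$ from the Global Convergence Theorem. Fix an accumulation point $u^* \in \textnormal{accum}(u^0)$ with a subsequence $u^{k_l} \to u^*$; by Corollary 2 we already know $\rho(u^*) = z^*$. Given any competitor $\tilde u \in \R^n$ with $\rho(\tilde u) = z^*$, the goal is to show $R(u^*) \leq R(\tilde u)$, which then directly expresses that $u^*$ is $R$-minimizing with respect to $\rho(u) = z^*$.

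The key step is to exploit the global optimality of $u^{k_l}$ in the subproblem at $u^{k_l-1}$, namely $E_{u^{k_l-1}}(u^{k_l}) \leq E_{u^{k_l-1}}(\tilde u)$. Writing out both sides, cancelling the common $G(z^{k_l-1})$ term, and inserting $\rho(\tilde u) = z^*$, this reduces to
\begin{equation*}
\tfrac{1}{\tau}D_h(z^{k_l},z^{k_l-1}) + \langle \nabla G(z^{k_l-1}),\,z^{k_l}-z^{k_l-1}\rangle + R(u^{k_l}) \leq \tfrac{1}{\tau}D_h(z^*,z^{k_l-1}) + \langle \nabla G(z^{k_l-1}),\,z^*-z^{k_l-1}\rangle + R(\tilde u).
\end{equation*}
Now pass to the limit $l \to \infty$. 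Since $z^{k_l-1} \to z^*$ and $z^{k_l} \to z^*$, and since assumption B guarantees $z^* \in \intdom h$ (so $\nabla h$ stays bounded near $z^*$ and the local Lipschitzness of $\tfrac{1}{\tau}\nabla h - \nabla G$ yields continuity of $\nabla G$ at $z^*$), the two Bregman-distance terms and the two linear terms all vanish in the limit. Hence $\liminf_{l \to \infty} R(u^{k_l}) \leq R(\tilde u)$, and lower semi-continuity of $R$ gives $R(u^*) \leq \liminf_l R(u^{k_l}) \leq R(\tilde u)$. As $\tilde u$ was arbitrary, $u^*$ is $R$-minimizing.

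For the second assertion, uniqueness of the $R$-minimizing element w.r.t.\ $\rho(u) = z^*$ collapses $\textnormal{accum}(u^0)$ to a single point $\{u^*\}$. The sequence $(u^k)_{k=1}^\infty$ is bounded (by coercivity of $E$ together with the descent property in Corollary 1), and a bounded sequence with a unique accumulation point must converge to that point, so $u^k \to u^*$. The main obstacle in the argument is the passage to the limit in the subproblem inequality: everything hinges on being able to treat all $D_h$ and $\nabla G$ terms as continuous at $z^*$, which in turn rests on the already-verified $z^* \in \intdom h$; once that is in place, the remaining bookkeeping is routine, and no further regularity beyond assumptions A and B is needed.
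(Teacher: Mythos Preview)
Your proof is correct and follows essentially the same route as the paper: compare $u^{k_l}$ against an arbitrary competitor $\tilde u$ with $\rho(\tilde u)=z^*$ in the global-optimality inequality $E_{u^{k_l-1}}(u^{k_l})\le E_{u^{k_l-1}}(\tilde u)$, let the Bregman and linear terms vanish in the limit since $z^{k_l-1},z^{k_l}\to z^*\in\intdom h$, and conclude via lower semi-continuity of $R$. Your write-up is in fact slightly more explicit than the paper's about the use of $\liminf$ and the continuity of $\nabla h,\nabla G$ near $z^*$, but the argument is the same.
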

\begin{proof}
	Let $u \in \{ u \in \R^n ~|~ \rho(u)= z^*\}$ and $u^* \in \text{accum}(u^0)$ be arbitrary. As $z^* \in \intdom{h}$, this implies to $u \in \rho^{-1}(\intdom{h})$.
	Rewriting the optimality assumption $E_{u^k}(u^{k+1})\leq E_{u^k}(u) \ $ $\forall u \in \rho^{-1}(\dom{h})$ results in the inequality
	\begin{equation*}
	R(u^{k+1})-R(u) + \frac{1}{\tau}D_h(\rho(u^{k+1}),\rho(u^k)) - \frac{1}{\tau}D_h(\rho(u),\rho(u^k)) + \langle \nabla G(\rho(u^k)),\rho(u^{k+1})-\rho(u) \rangle \leq 0 .
	\end{equation*}
	Taking the limit of $l \to \infty$ for a subsequence $u^{k_l} \to u^*$ with the knowledge that $\lim_{l \to \infty} \rho(u^{k_l}) = z^*$ and $\rho(u) = z^*$ by assumption, then reveals that 
$R(u^*) \leq R(u)$, showing that $u^* \in \text{accum}(u^0)$ is an $R$-minimizing solution to $\rho(u) = z^*$. If in particular, the set of $R$-minimizing solutions is already a singleton, then the result follows, as the set $\text{accum}(u^0)$ nonempty due to \cref{cor:descent}.
\end{proof}

\begin{example}
	As an example, consider a simple periodic function $\rho:\R^n \to \R^n$, $\rho(u) = (\sin(u_1),\dots,\sin(u_n))$ and $R = ||\cdot||_p$ for $p >0$. The $R$-minimizing solution to $\rho(u) = z$ is then unique for any $z \in \text{Im}(\rho)$. To see this consider that $\sin(x)$ is bijective on $[-\frac{\pi}{2},\frac{\pi}{2}]$. For any level set $z_i \in [-1,1]$ we can find a unique element $u_i$ in this interval $[-\frac{\pi}{2},\frac{\pi}{2}]$, so that $\sin(u_i) = z_i$. Due to the strict monotonicity of $||\cdot||_p$ on either $\R^+ $or $\R^-$, any other element $u_i$ that fulfills $\rho(u_i) = z_i$ must have a greater function value in $R$.  
\end{example}

\section{Implementation Details}\label{sec:generalized}

This section will focus on several interesting special cases of our general composite model \cref{eq:problem} and also discuss possible pairs $G,h$.

\subsection{Modeling}\label{sec:modeling}
For several implementation examples it will be convenient to be a bit more specific with our choices of $G,\rho$ and $R$. One example is the natural extension to several additive terms, 
\begin{equation}\label{eq:gen_problem}
E(u)  = \sum_{i=1}^m G_i \left(\sum_{j=1}^n \rho_{ij}(u_j) \right) + \sum_{j=1}^n r_j(u_j),
\end{equation}
which was already mentioned briefly in \cref{eq:gen_prob}.
This formulation is interesting due to its straightforward interpretation as a way to optimize a function with $n$ measurements of linear combinations of our $m$ variables. Hence the task relates to nonlinear regression models and imaging with nonlinear measurements. It is also a natural discretization of a general nonlinear integral operator as defined for example in \cite{precup_methods_2002-1,bardaro_nonlinear_2003}.

However, defining $G:\R^{m \times n} \to \R$, $G(v) = \sum_{i=1}^m G_i(\sum_{j=1}^n v_{ij})$ and $\rho: \R^n \to \R^{m \times n}$, defined component-wise by $\rho_{ij}(u_j)$, for univariate functions $G_i,\rho_{ij},r_j$, we see that this is just an instance of the general composite model
. The maximal generalization would be achieved by taking $\rho_{ij}:\R^q \to \R^p$, $G_i : \R^p \to \R$, $r_j:\R^q \to \R$, although in practice $q$ would have to be quite small if we wanted to solve the subproblems by an exhaustive search.


Writing out the majorizer to \cref{eq:gen_problem} with univariate functions under the assumption that $\sum_{i=1}^m L_i h_i-G_i$ is a convex function (as required for L-smooth adaptability, \cref{def:lsmad}) gives
\begin{equation}\label{eq:gen_maj}
E_{u^k}(u) = \sum_{i=1}^n D_{h_i}(\rho_{ij}(u_j),\rho_{ij}(u^k)) + \sum_{i=1}^n G_i'\left(\sum_{j=1}^m \rho_{ij}(u_j^k)\right) \sum_{j=1}^m \rho_{ij}(u_j) 
+\sum_{j=1}^n r_j(u_j),
\end{equation}
up to constant terms. This reveals that the majorization function is separable if each $h_i$ is chosen separable so that $h_i(u) = \sum_{j=1}^n h_{ij}(u_j)$, as the Bregman distance to these $h_i$ is then also separable and the summation over all $m$ parameters can be exchanged with the summation over all $n$ 'measurements'. The resulting $m$ independent 1D dimensional subproblems can then be solved efficiently.

\begin{remark}
	This generalization is not only interesting for regression-type problems, where the outer sum naturally sums over all samples and the inner sum over a superposition of parametrized functions, but also for any sort of problem where it would make sense to split a function into the composition of a function and a super-position of simpler functions. As an example consider the 1-dimensional polynomial problem
	\begin{equation}
    P(x) = \left( \sum_{i=0}^p a_i x^i-f \right) ^2 + \sum_{i=0}^q b_i x^i .
	\end{equation}
	While it would be natural to choose $\rho:\R \to \R, \rho(x) = \sum_{i=0}^p a_i x^i-f$, i.e the inner polynomial, another possibility would be to choose $\rho:\R \to \R^p, \rho(u) = (a_0,\dots,a_n x^n)$ and likewise to set $G:\R^p \to \R, G(v) = (\sum_{j=1}^p v_j  - f)^2$. A separable majorizer for this $G$ using \cref{eq:gen_maj} would lead to different subproblems than before.
\end{remark}

An interesting fact about the general composite model \cref{eq:problem} is that we are actually allowed a great deal of freedom, as both $G$ and $\rho$ can be nonconvex. It is possible to insert any invertible function $f$ and its inverse $f^{-1}$ on $\text{dom }\rho$ and solve the equivalent problem with $\tilde{G} = G \circ f$ and $\tilde{\rho} =  f^{-1} \circ \rho$. As an example, consider the following model
\begin{equation*}
E(u) = F \left(\prod_{j=1}^n g_j(u_j) \right),
\end{equation*}
where we have a product of parametrized functions $g_j:\R \to \R^+$. We can set $G:\R^n \to \R, G(v) = F\left(\exp(\sum_{j=1}^n v_j)\right)$ and $\rho_j : \R \to \R, \rho_j(u_j) = \log(g_j(u_j))$, and recover the additive superposition of parameters in \eqref{eq:gen_problem}.

We may freely use these possibilities due to \cref{cor:orbits}. The proposed algorithm converges to the set of stationary solutions of $E$. This result is independent of the actual decomposition of $E$ into $G,\rho,R$ as long as the chosen triple $G,\rho,R$ still fulfills all required conditions.

\subsection{Choices for the Bregman Distance}
Up to now, we always considered an abstract pair $(G,h)$ fulfilling the conditions that both functions are smooth and $Lh-G$ and $h$ are convex. Now we will detail several tangible instances of these functions.

The trivial case is present when $G$ is a concave function. We are then allowed to choose an arbitrary convex function $h$, as $-G$ is itself convex. A natural choice is then to choose $h$ as a linear function, as its Bregman distance then vanishes,
\begin{equation*}
D_h(u,v) = \langle h,u \rangle - \langle h,v\rangle - \langle h,u-v\rangle = 0.
\end{equation*}
The resulting majorizer,
\begin{equation}\label{eq:concave_maj}
E_{u^k}(u) = \langle \nabla G(\rho(u^k)),\rho(u)-\rho(u^k) \rangle + G(\rho(u^k)) + R(u),
\end{equation}
is an instance of iterative reweighting related to variants discussed in \cite{ochs_iteratively_2015,ochs_iterated_2013}. If $R$ is convex and $\rho$ is coordinate-wise convex, then the majorizer is even convex. When $R$ is a convex function and $\rho$ is an affine function, then we recover an instance of the difference of convex functions (DC) algorithm \cite{tao_convex_1997}. Note that for the second part of our analysis in \cref{sec:convergence} to hold, we need to choose $h$ strongly convex. We mention in passing that the results of \cref{cor:descent} also hold relative to the Bregman distance $D_{-G}(\cdot,\cdot)$.
%

The standard case is present when $G$ is $L$-smooth. We then choose $h=\frac{1}{2}||\cdot||^2$ and recover the usual Euclidean distance measure via $D_h(u,v) = \frac{1}{2}||u-v||^2$. Note that $G$ can be $L$-smooth without being convex, for example when considering a smooth truncated quadratic function \cite{artina_linearly_2013}.

However, even when $G$ is L-smooth, more advantageous functions $h$ might exist. Consider the function $G:\R^m \to \R$,
\begin{equation}\label{eq:l2}
G(v) = \frac{1}{2}||Av-f||^2 ,
\end{equation}
for a matrix $A \in \R^{p \times m}$. The function is $L$-smooth with $L=||A^TA||_{\operatorname{op}}$. However we can also inspect $Lh-G$ via its second derivative condition\footnote{We follow the notation that A is positive semi-definite if $A\succeq 0$.},
\begin{equation}
L \nabla^2 h(v) - A^TA \succeq 0 \quad \forall v \in \R^m.
\end{equation}
We could of course choose $h=G$, as \cref{eq:l2} is convex, thereby solving the original problem in each subproblem, but we are looking for functions $h$ so that the subproblems are easy to solve. Such a choice is presented by $h = \frac{1}{2}||\cdot||^2_D$ with a diagonal matrix $D$. Choosing $D$ so that $D-A^TA \succeq 0$ yields a diagonal preconditioning - we intrinsically find vector-valued step sizes by an appropriate choice of $h$.

An important and motivating property of the L-smooth adaptable property is however the inclusion of logarithmic functions, most prominently the Kullback-Leibler divergence as possible terms for $G$, even though this function is not globally $L$-smooth \cite{bauschke_descent_2017}. Consider the function $G:\R^m \to \overline{\R}$:
\begin{equation}
G(v) = \sum_{i=1}^p (Av)_i-f_i+f_i\log \left(\frac{f}{(Av)_i}\right) ,
\end{equation}
for $A \in \R^{p \times m}_+$ and $f \in \R^p_{++}$ and the set $C = [0,\infty)^m$. An appropriate function $h$ is given by $h(v) = -\sum_{i=1}^m \log(v_i)$. \cite[Lemma 7]{bauschke_descent_2017} reveals that the appropriate constant is $L=||f||_1$ so that $Lh-G$ is convex on $\intdom{h} = (0,\infty)^m$. This function and related 'entropy' functions are possible choices for $h$, yet, as now the domain of $h$ is strictly smaller than $\R^n$, one has to check, whether the energy fulfills $z^{k+1} \in \intdom{h}$ and $z^* \in \intdom{h}$ to guarantee well-posedness of the iterations and global convergence, respectively.

A general observation, see \cite{chouzenoux_variable_2014} or \cite[Example 33]{ochs_non-smooth_2017}, is that once we have gained a Bregman distance $D_h$ from $h$, we may actually use a whole family of functions $h_k$ as long as they majorize $h$ while being convex,
\begin{equation*}
h^k \in \lbrace h^k \textnormal{ essentially smooth}, \overline{\dom{h^k}}=C \ | \ h^k-h \text{ convex } \rbrace.
\end{equation*}
The induced Bregman distance then fulfills
\begin{equation*}
D_{h^k}(u,v) \geq D_h(u,v) \quad \forall u,v \in \R^n.
\end{equation*}
A specific instance of this observation is choosing $h$ first and then implementing adaptive step-sizes in this fashion by varying $h^k$ or the approach of \cite{chouzenoux_variable_2014} where a sequence  $h^k=\frac{1}{2}||\cdot||^2_{A^k}$  is constructed with symmetric positive definite matrices $A^k$.

This is of course only a short overview of possible pairs $(G,h)$, further examples can be found in \cite{bauschke_legendre_1997,bauschke_descent_2017,bolte_first_2018,ochs_non-smooth_2017,chouzenoux_variable_2014}.

\subsection{An example of a non-separable, solvable subproblem}\label{sec:related_tv}

This section will continue the discussion started in the introductory section about specific subproblems. We have noted that the presented approach is especially interesting, if the considered subproblems  \cref{eq:the_alg} can still be solved to global optimality. An interesting for this are cases where the subproblems can be solved to global optimality by lifting \cite{alberti_calibration_2003,pock_global_2010} or other relaxation strategies \cite{kolmogorov_what_2004,boykov_fast_2001-1,ishikawa_segmentation_1998}

Models that include the total variation norm in place of the regularization term $R$ are ubiquitous in imaging tasks \cite{burger_guide_2013,rudin_nonlinear_1992} and have been a major motivation in applications of our work. These models will appear as special instances of the discussed 'liftable' subproblems.

We will start with a basic representation of functions that are amenable to lifting,
\begin{equation}\label{eq:pointwise_lift}
E(u) = \sum_{i=1}^n \nu_i(u_i,(Du)_i),
\end{equation}
with continuous functions $\nu: \R \times \R^d \to \R$ that are convex in their second argument and where $D$ denotes a finite-difference gradient. This is a natural discrete representation of the continuous model \cref{eq:cont_lift}, which can be efficiently solved by functional lifting \cite{pock_global_2010}. The choice of $\nu_i(x,y) = g_i(x) + ||y||_2$ then recovers a composite model with some term $G = \sum_{i=1}^n g_i$ and a regularizer $R$ which is total variation \cite{rudin_nonlinear_1992}:
\begin{equation}\label{eq:tv_functional}
E(u) = \sum_{i=1}^n g_i(u_i) + ||Du||_1,
\end{equation}
This is a successful strategy, but its application is limited by the fact that separability is needed. A much more general model would be
\begin{equation}\label{eq:gen_lift}
E(u) = G(\rho(u)) + \sum_{i=1}^n \gamma_i((D u)_i),
\end{equation}
with $\rho$ separable as before, $G$ L-smooth adaptable and $\gamma_i$ convex.
Yet while the lifting scheme of \cite{pock_global_2010} is not applicable due to the non-separability of $G$, this is nevertheless a special instance of our general problem \cref{eq:problem}. Indeed we can write down the majorizer, assuming a separable $h(u)=\sum_{i=1}^n h_i(u_i)$, as
\begin{equation}\label{eq:gen_tv}
E_{u^k}(u) = D_{h}(\rho(u),\rho(u^k)) + \langle \nabla G(\rho(u^k)),\rho(u^k)\rangle + \sum_{i=1}^n \gamma_i((D u)_i),
\end{equation}
up to constants. Now this majorizer is in turn a particular instance of \cref{eq:pointwise_lift}, as the first two terms are separable, and we can now solve \cref{eq:gen_lift} by iteratively solving the lifting subproblems.

Furthermore, we can even exchange convexity of $\gamma_i$ for differentiability. For arbitrary $\gamma_i$ that are L-smooth adaptable, we can linearize the second term as well, in full analogy to \cref{eq:gen_maj}, giving the majorizer
\begin{align*}
E_{u^k}(u) = &\sum_{i=1}^n d_{h_G}(\rho(u),\rho(u^k))
+ \langle \nabla G(\rho(u^k)),\rho(u^k)\rangle \\+ &\sum_{i=1}^n d_{h_{\gamma_i}}( (Du)_i,(Du^k)_i) + \gamma'_i((Du^k)_i)(Du)_i .
\end{align*}
which is again an instance of \cref{eq:pointwise_lift}. For concave $\gamma_i$ this is particularly attractive as we can choose $h_{\gamma_i}$ a as linear functions and just keep the linearization in full analogy to iterative reweighting as discussed in the previous subsection in \cref{eq:concave_maj}.

While this approach greatly increases the applicability of lifting schemes, it is important to keep in mind that previous global optimality considerations for lifting schemes \cite{pock_global_2010} do not translate to these generalized problems. From \cref{sec:convergence} we only gain convergence to stationary points. We will use the next section to analyze the quality of solutions that we receive numerically.

\subsection{Inertia}
A small side note to the previous investigations that is nevertheless quite interesting in the context of nonconvex optimization is inertia.
Once we have \eqref{eq:the_alg}, we can just as well consider 
\begin{align}\label{eq:inertia}
\begin{split}
E_{u^k,u^{k-1}}(u) = &\frac{1}{\tau}D_h(u,u^k) + \langle \nabla G(\rho(u^k)),\rho(u)-\rho(u^k)\rangle + G(\rho(u^k)) \\
&+ R(u) + \frac{\beta}{\tau}\left(D_h(\rho(u),\rho(u^k))-D_h(\rho(u),\rho(u^{k-1})) \right),
\end{split}
\end{align}
inserting an inertial term into the generalized forward-backward equation analogous to \cite{ochs_ipiano:_2014}.

Inertia can be quite valuable for first-order optimization, especially as we allow $G$ to be nonconvex, but only utilize its gradient, i.e local information in each step. In practice we observe that inertia can sometimes help the algorithm to reach better minima or speed up the initial convergence for badly conditioned $G$. Also spurious stationary points can often be overcome. Furthermore, the additional cost of solving \cref{eq:inertia} is minuscule compared to the non-inertial variant.

However, due to the non-existence of the triangular inequality for Bregman distances, we cannot bound the iterations by a Lyapunov function in general as in previous work \cite{ochs_ipiano:_2014} and continue the proof of convergence with this Lyapunov function as a majorizer analogous to \cref{sec:convergence}. A related discussion and solution in the convex setting can be found in \cite{hanzely_accelerated_2018}. For the special cases of induced squared norms, i.e. $h= ||u||_A^2$, convergence still follows by adapting \cref{sec:convergence_proof} to the results of the recent work \cite{ochs_unifying_2016}, but we omit a further discussion.

In practice this modification still works well in many cases, setting $\beta < 0.5$. It is also possible to backtrack in case of violations of Lyapunov function bounds.

\section{Experimental results}\label{sec:experiments} \label{list:G}
In this section we analyze the proposed algorithm numerically. We will first consider a synthetic example, where we will be able to compare the algorithm to other methods easily. 
We will then move to an imaging application, the depth super-resolution from raw time-of-flight  data.
\subsection{Synthetic experiments}
We analyze the following energy
\begin{equation}\label{eq:test}
\min_{u \in \R^n, u_i \in [a,b]} F_f(A\rho(u)) + R(u),
\end{equation}
where we have a bounded interval $[a,b]$, an L-smooth adaptable function $F_f \circ A : \R^n \to \R$ and regularizer $R(u) = \sum_{i=1}^n r(u_i-u^*_i)$ with $r:\R \to \R$. $\rho$ is chosen separable so that $\rho(u) = (\rho_1(u_1),\dots,\rho_n(u_n))$ with $\rho_i:\R \to \R$, whom we will in general choose equal, and omit the subscript. The nonlinearity $r$ is aligned so that $\argmin_x r(x) = 0$. We first draw $u^* \in [a,b]^n$ at random and then set $f=A\rho(u^*)$. We choose $F_f$ as a measure of distance between $f$ and $A\rho(u)$ that fulfills $u^* \in \argmin_u F_f(A\rho(u))$ and $F_f(A\rho(u^*)) = 0$.
Through this construction, we can guarantee that the drawn $u^*$ will be a global minimizer of \eqref{eq:test}.

\begin{figure}
	\centering
	\subfloat[]{\label{fig:test_nonlin:a}\includegraphics[width=0.32\textwidth]{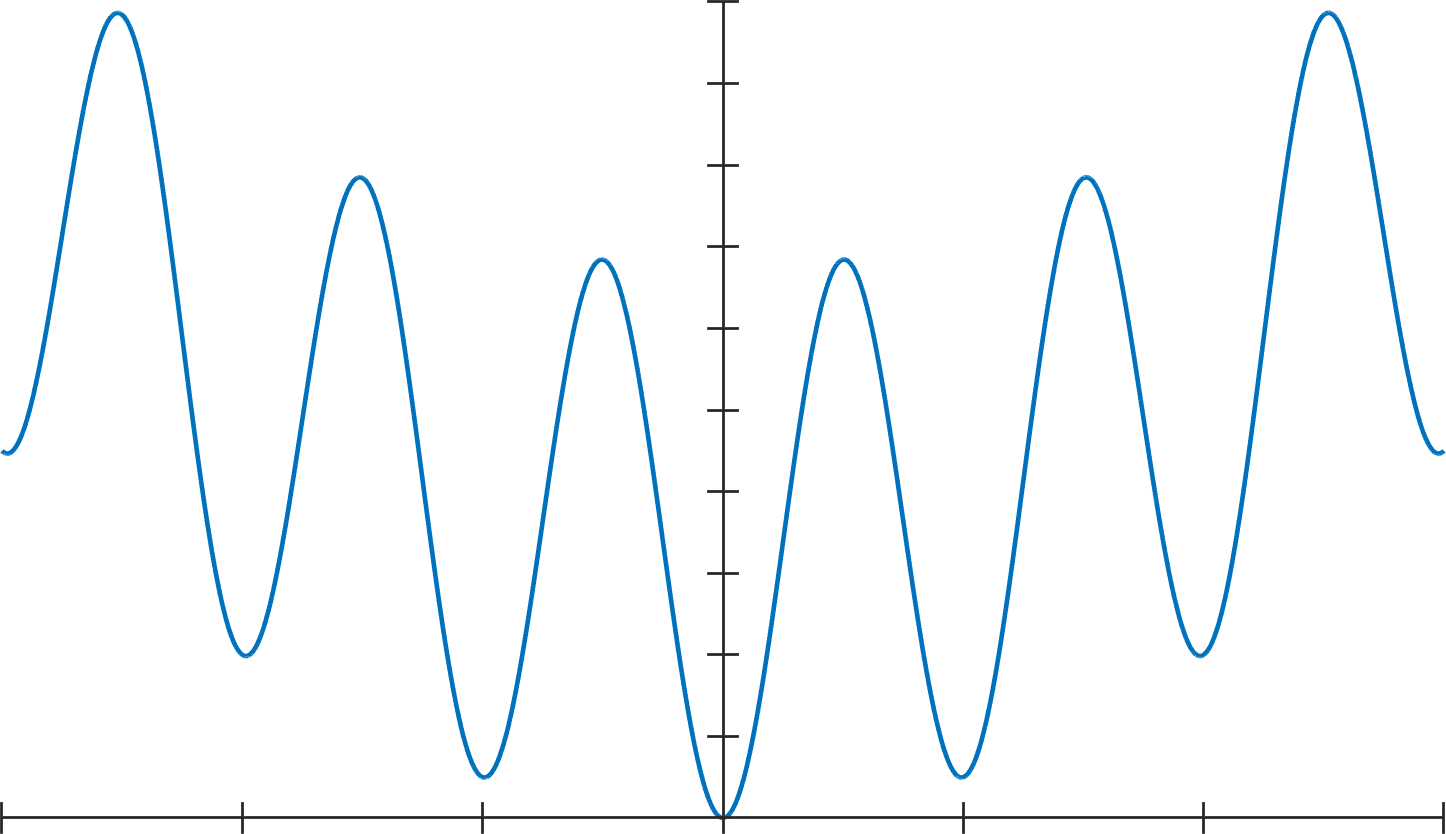}}
	\hspace{1pt}
	\subfloat[]{\label{fig:test_nonlin:b}\includegraphics[width=0.32\textwidth]{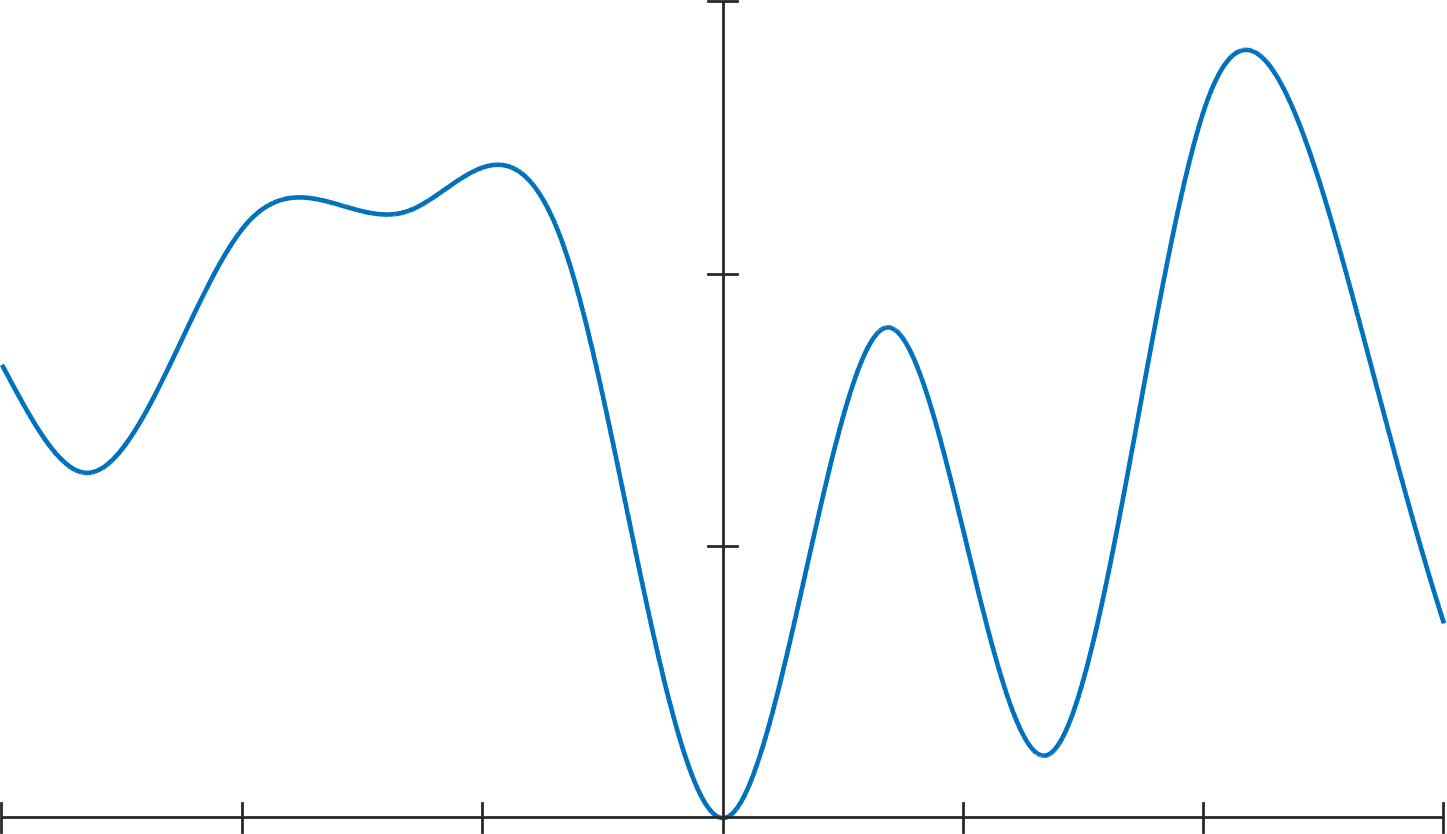}}
	\hspace{1pt}
	\subfloat[]{\label{fig:test_nonlin:c}\includegraphics[width=0.32\textwidth]{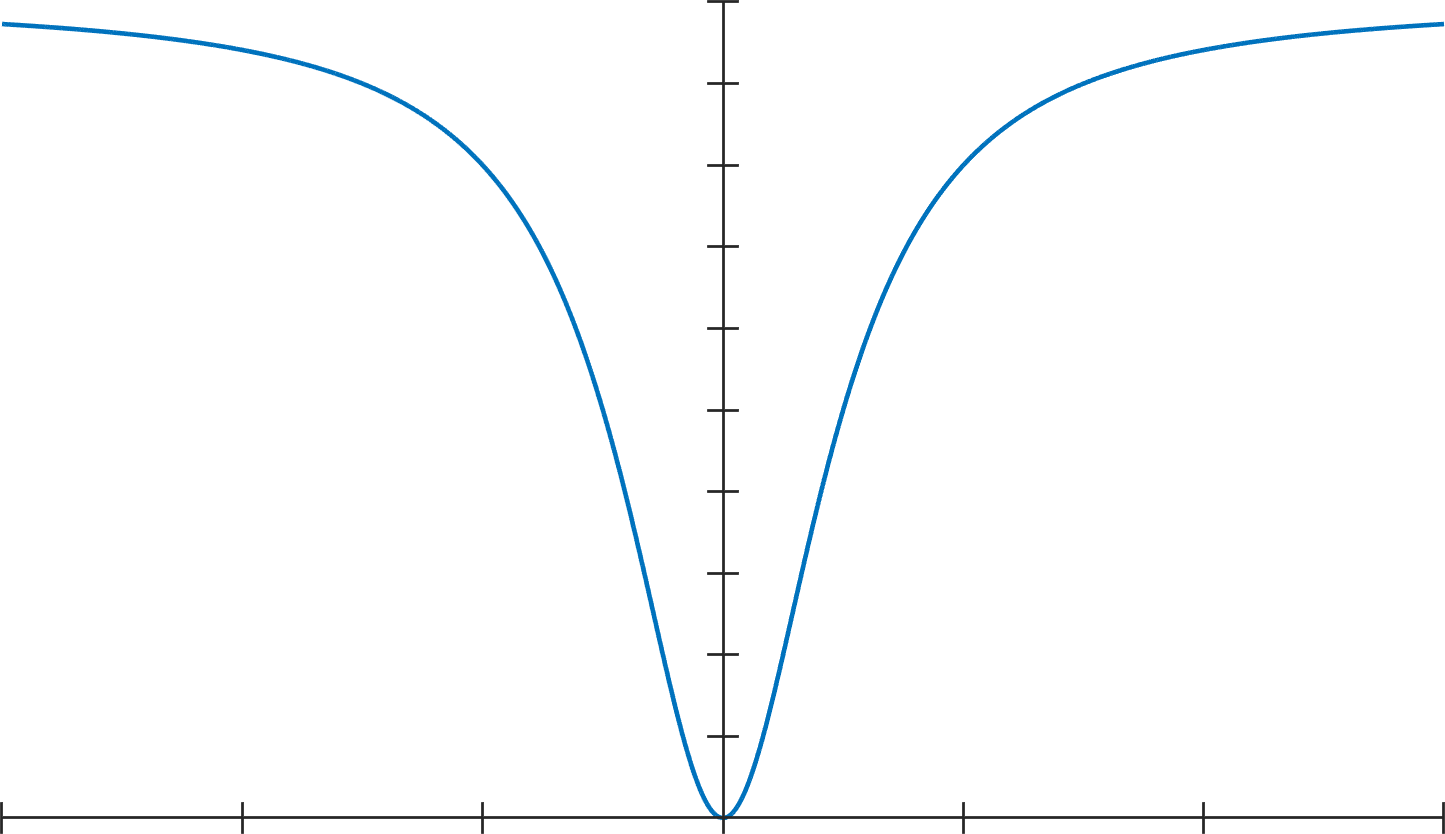}}
	\caption{Example of nonlinearities used in synthetic experiment, (a) $x^2-10\cos(2\pi x)$ (Rastrigin's function \cite{muhlenbein_parallel_1991}), (b) random spline function with 12 queries and (c) $\frac{x^2}{1+x^2}$. \label{fig:test_nonlin}}
\end{figure}	

Now we vary the difficulty of this possibly nonconvex optimization problem in two ways. 
First we choose nonlinearities $\rho,r$ as either
\begin{enumerate}[label=(\arabic*)]
	\item \emph{Simple} $\rho(x) = \exp(x)$, $r(x) = x^2$
	\item \emph{Doable} $\rho(x) = x^2-10\cos(2\pi x)$ \cite{muhlenbein_parallel_1991}, cf. \cref{fig:test_nonlin:a}, $r(x) = \frac{x^2}{1+x^2}$, cf. \cref{fig:test_nonlin:c}
	\item \emph{Difficult} $\rho$ is a (coercive) piecewise cubic polynomial drawn by interpolating 12 values in $[a,b]$, $r(x) = -\text{sinc}(x)$
	\item \emph{Very Difficult}$\rho$ is a (coercive) piecewise cubic polynomial drawn by interpolating 12 values in $[a,b]$, $r(x) = x^2-10\cos(2\pi x)$. 
\end{enumerate}
This allows us to move from a nicely behaved, almost convex test case (1) to a nonconvex problem with a well-behaved minimizer (2), adding further oscillations in (3) and finally arriving at two "very nonconvex" functions in (4).

Then we vary the function $G = F_f \circ A$. Note that this function critically determines the interconnection of variables. If $A$ is a diagonal matrix, then the problem is fully separable and can by solved by $n$ separate 1D optimizations with a single step of the algorithm, but if $A$ is a full matrix, then all variables are interdependent. Further, when $A$ is a rectangular matrix, then the system of nonlinear equations is under-determined and the function landscape is (intuitively) not as well-behaved. Also, we are allowed to choose nonconvex functions $F$ as long as $G$ is still $L$-smooth adaptable.

\begin{enumerate}[label=(\alph*)]
	\item \emph{Convex, local:} $F_f(v) = \frac{1}{2}||v-f||^2$,$A \in \R^{n \times n}$ is a random matrix whose entries are normally distributed relative to its diagonal. 
	An appropriate essentially smooth function is $h(v) = \frac{1}{2}||v||_D^2$, where $D$ is a diagonal matrix with entries $d_i = \sum_{j=1}^n |A^TA|_{ij}$
	\item \emph{Convex, non L-smooth, local:} $F_f$ is the KL-divergence $F_f(v) = \sum_{i=1}^n v_i-f_i\log(v_i)$, $A \in \R^{n \times n}$, is chosen as in (a). Here $h$ is given by Burg's entropy $h(v) = \sum_{j=1}^n -\log(v_j)$ \cite{bauschke_descent_2017}.
	\item \emph{Convex, full:} $F_f(v) = \frac{1}{2}||v-f||^2$,$A \in \R^{m \times n}$ is a full random matrix with singular values in $[\frac{1}{\log(n)},1]$. $m= \frac{n}{3}$. 
	Choose $h$ as in (a).
	\item \emph{Nonconvex, full:} $F_f$ is a smooth-truncated quadratic \cite{artina_linearly_2013}, i.e. a smoothed version of $F_f(v) = \sum_{i=1}^m \min((v_i-f_i)^2,\lambda)$, $A \in \R^{m \times n}$ is a full random matrix with singular values in $[\frac{1}{\log(n)},1]$, $m= \frac{n}{3}$.
	Choose $h$ as in (a).
	
\end{enumerate}
\begin{figure}[h]
	\centering
	\subfloat[Proposed method\cref{eq:the_alg}]{\includegraphics[width=0.49\textwidth]{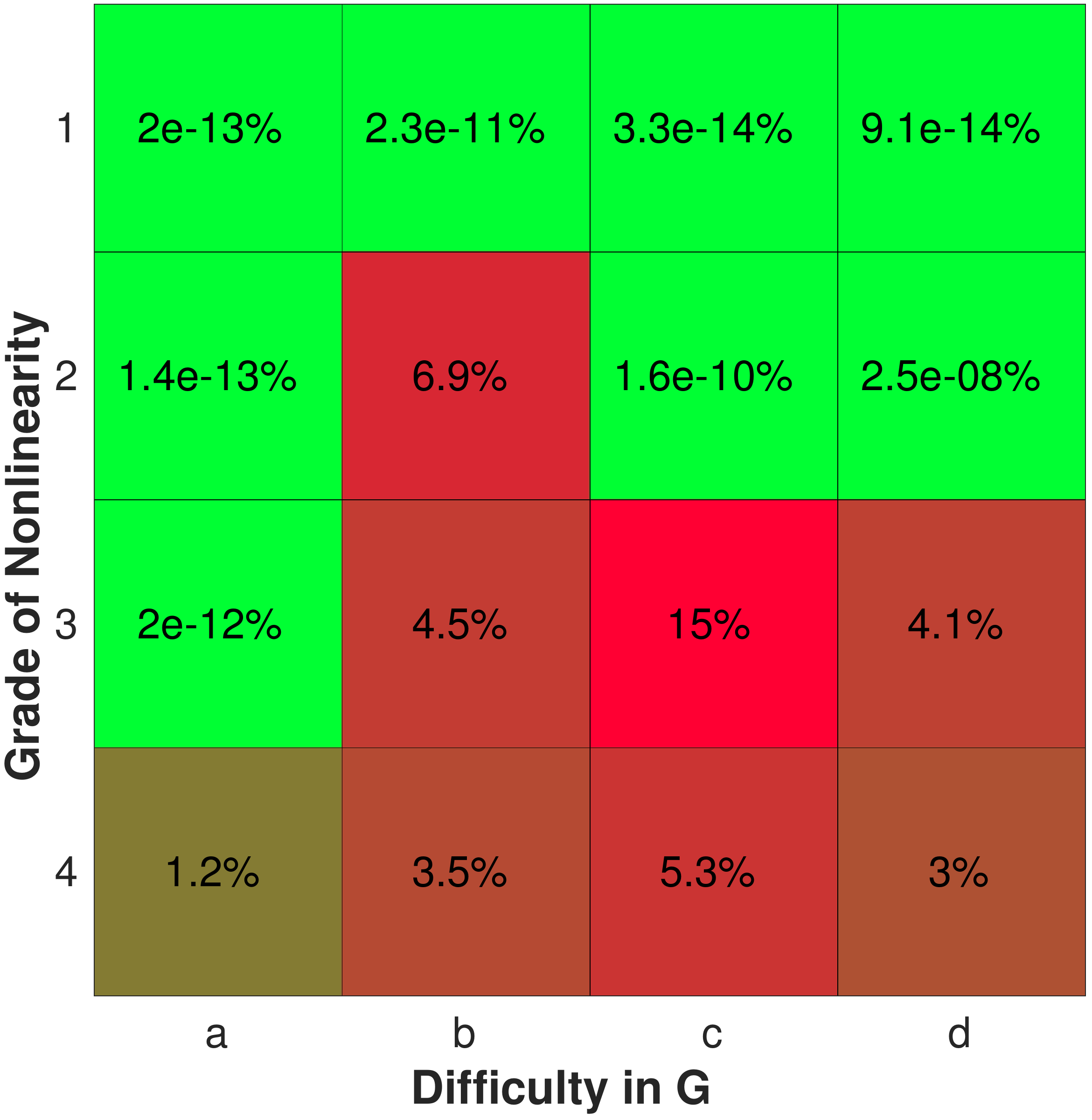}}
	\subfloat[Proposed (with inertia) \cref{eq:inertia}]{\includegraphics[width=0.49\textwidth]{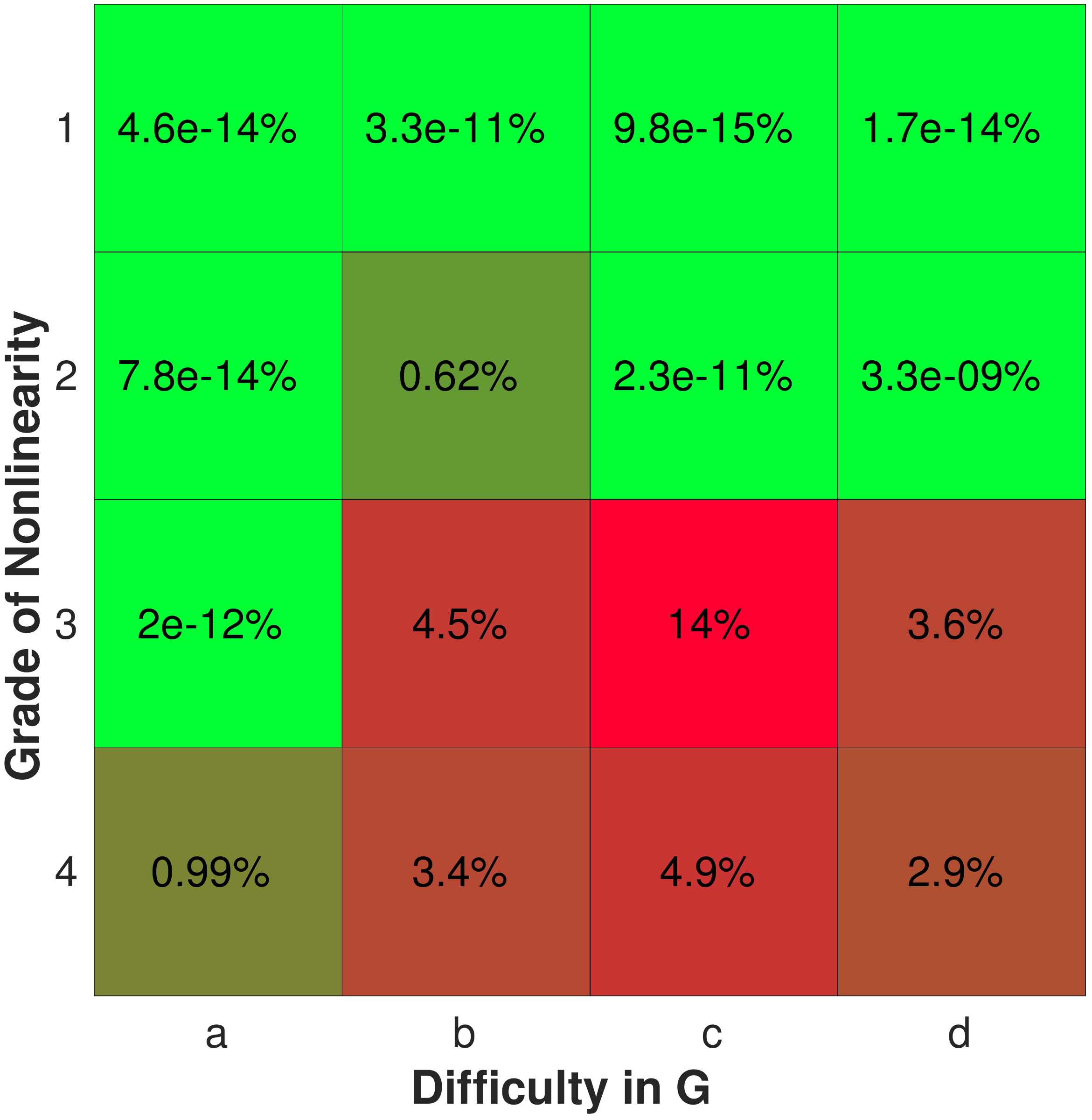}}
	\caption{Examination of different synthetic experiments. We increase the difficulty in $G$ from left to right and in $\rho,r$ from top to bottom, as detailed in \cref{list:G}. In each cell we show the value $\frac{E(\bar{u})-E^*}{\tilde{E}}$, the value $E(\bar{u})$ reached by the algorithm minus the global minimum $E^*$, normalized by $\tilde{E}$, a (sampled) median of the energy values of the function, indicating the quality of the solution relative to the overall energy landscape.
		\label{fig:Self-comparison}}
\end{figure}
We run our method \eqref{eq:the_alg} without and with inertia, $\beta=0.4$ \eqref{eq:inertia}. The subproblems in each iterations are fully separable, so we solve the 1D problems in parallel by exhaustive search with a sufficient amount of trial points and a parabolic refinement around the approximate minimizer to desired precision. 
The refinement is a standard technique, e.g. \cite{huyer_global_1999} for 1D local optimization and further references can be found, for example in book of Luenberger \cite[pp. 217, 224]{luenberger_linear_2015}. This technique has also been used previously in imaging, for example, to refine exhaustive search procedures in the context of quadratic decoupling for stereo in \cite{kuschk_fast_2013-1}.

To mitigate the risk of lucky initializations, we run the algorithm 25 times with random starting vectors for each test case and show the result which reached a median energy value. We set $n=150$ and $[a,b]$ =  $[-3,3]$, respectively $[a,b] = [\epsilon,3]$ for the Poisson case and implement the proposed method in MATLAB.

The results for all test cases can be found in \cref{fig:Self-comparison}. 
We see that either increasing the difficulty in $G$ or the difficulty of the nonlinearity makes the overarching optimization problem more difficult. 
Remarkably, our algorithm was able to find near-global optima for many test cases, especially the performance in row (2) is very good. However we see that the increased oscillations in (3) eventually degrade the quality of solutions. We also notice that differences can appear even for convex functions $G$ in (a) and (b). The squared $l^2$ norm in (a) seems to be easier to optimize globally, although the disparity to (b) is also connected to the analytical step sizes, that we choose. Choosing larger stepsizes for (b), e.g. by backtracking, would recover a similar behavior to (a).

\begin{figure}
	\subfloat[1d]{\label{fig:convergence_comparison:a}\includegraphics[width=0.5\textwidth]{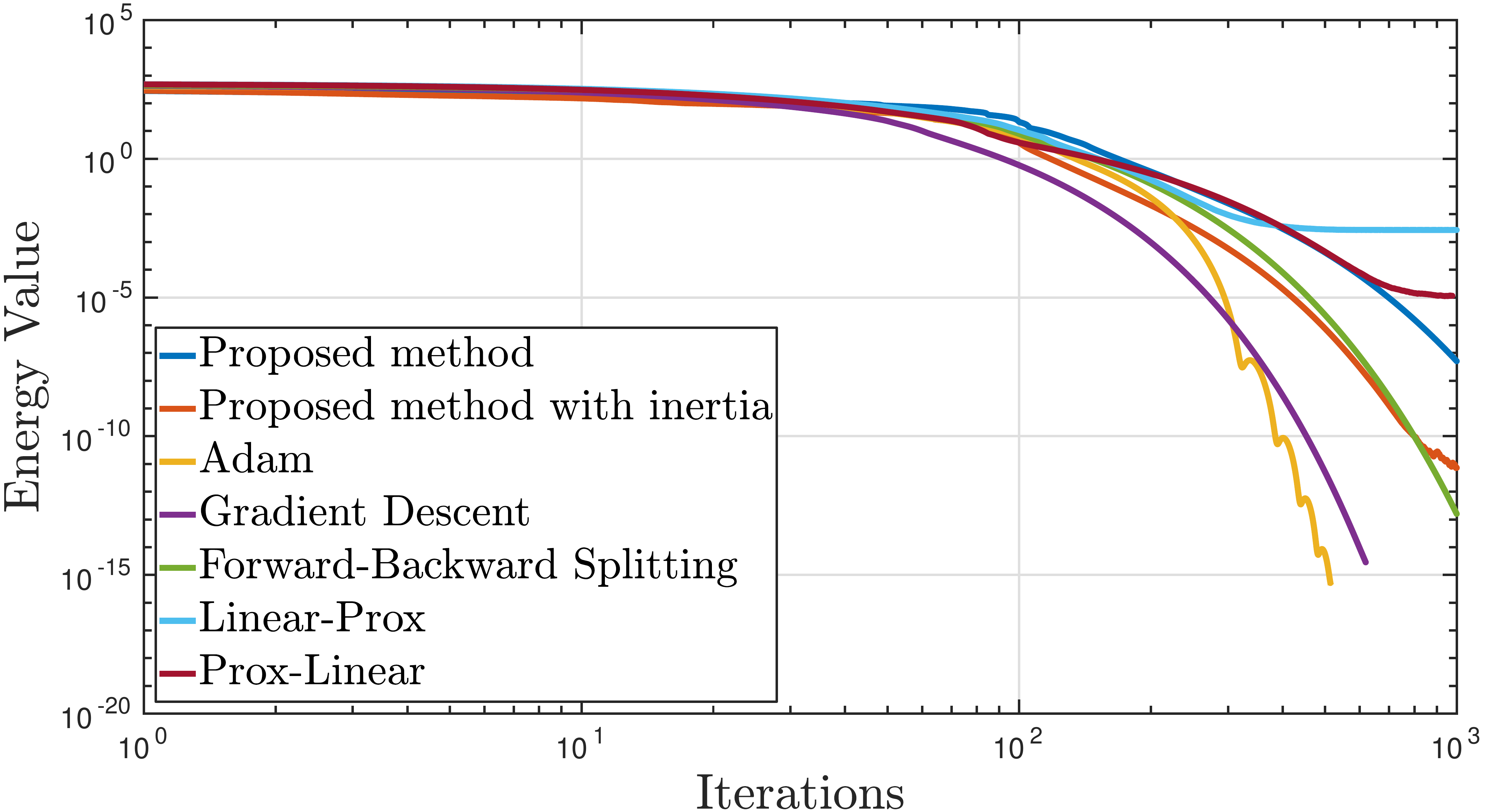}}
	\subfloat[3a]{\label{fig:convergence_comparison:b}\includegraphics[width=0.5\textwidth]{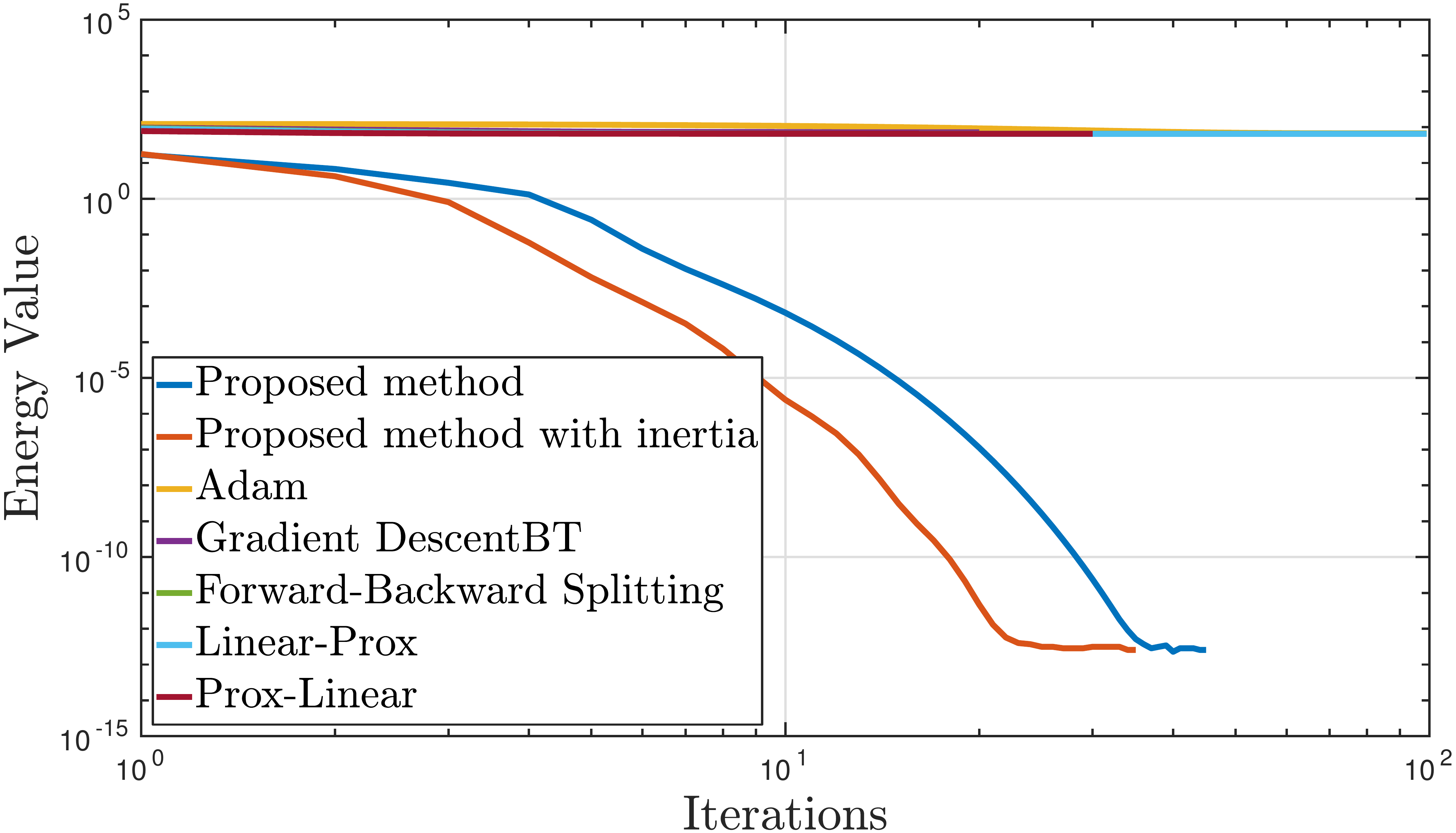}}
	\caption{Convergence of various first-order methods for composite optimization to global optimality. This figure shows problem 1d, i.e. a problem where $R,\rho$ are easy and $G$ difficult, and 3a, the opposite case. Only our method provides favorable results for the difficult case 3a. Shown are the proposed method \cref{eq:the_alg}, proposed with inertia: \cref{eq:inertia}, Adam: \cite{kingma_adam:_2015}, gradient descent: \cref{eq:intro_grad}, forward-backward splitting: \cref{eq:intro_fb}, Linear-Prox: \cref{eq:rel_lm}, Prox-Linear: \cref{eq:rel_pl} \label{fig:convergence_comparison}}
\end{figure}
We now compare with other first-order nonlinear optimization methods, 
namely as mentioned in the related work section, gradient descent \cref{eq:intro_grad}, forward-backward splitting \cref{eq:intro_fb}, the inner linearization, 'prox-linear', \cref{eq:rel_lm} and the outer linearization \cref{eq:rel_pl}.

To fairly evaluate all majorizers we generally solve the subproblems in forward-backward splitting \eqref{eq:intro_fb}, and outer linearization \eqref{eq:rel_pl} to global optimality, again with exhaustive search and parabolic fitting. For prox-linear \eqref{eq:rel_lm}, the subproblems do not decouple and we apply a standard interior point solver in each iteration. We otherwise apply the same methodology as before. We compare the convergence of all algorithms to the global minimum for two characteristic cases, '1d' and '3a'. While the first case denotes simple $\rho,R$ and difficult $G$, the second case denotes the opposite. We expect most methods to do well in the first case, but the second case is not as clear. \Cref{fig:convergence_comparison:a} and \cref{fig:convergence_comparison:b} show the results. It turns out that indeed all methods can reliably solve the first case, \cref{fig:convergence_comparison:a}, but only our method can find near-optimal solutions for the second test case, \cref{fig:convergence_comparison:b}. To compare our method to a modern 'aggressive' inertial variant that does not admit to a majorization-minimization framework, we also include the Adam optimizer \cite{kingma_adam:_2015}, however while this optimizer can find better minima, it is still far off from the global solution in test case '3a'.

\begin{figure}
	\includegraphics[width=\textwidth]{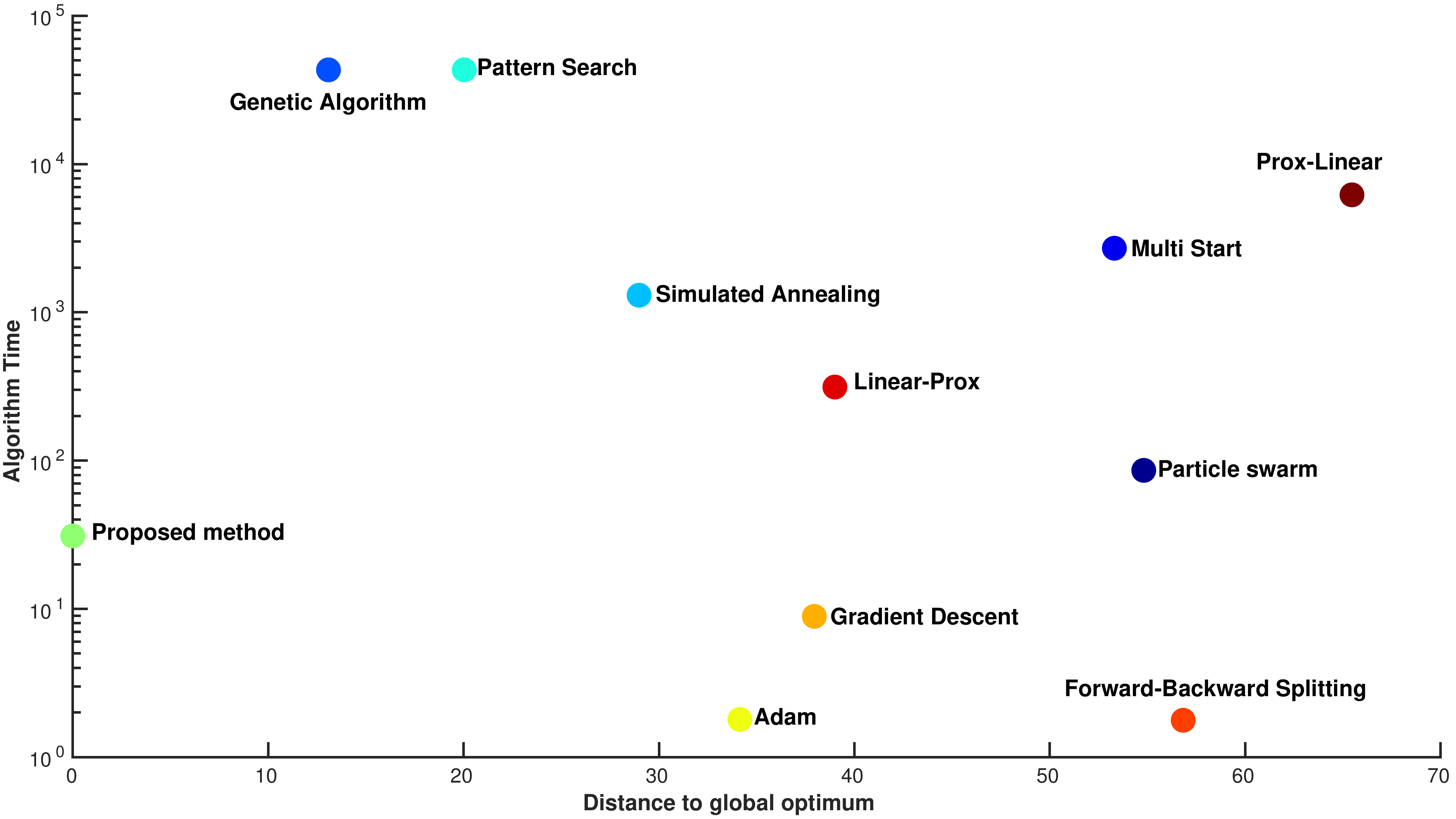}
	\caption{Evaluation of various optimization methods for test case '2c', in terms of energetic difference to global optimum vs. runtime in seconds. \label{fig:efficiency}}
\end{figure}

We can go a step further and compare the proposed algorithm to global optimization methods. 
See \cref{fig:efficiency} for a plot showing test case '2c' and the energetic difference to the global minimum versus the runtime of each the algorithms, with time in a log-scale. Previously mentioned algorithms are shown, as well as the MATLAB default implementations of a genetic algorithm \cite{goldberg_genetic_1989}, particle swarm \cite{kennedy_particle_1995}, pattern search \cite{audet_analysis_2002}, simulated annealing \cite{ingber_adaptive_1996} and multi-start methods \cite{ugray_scatter_2007}. All of these methods can reliably find global near-optimal solutions for low dimensions
, however in our setting of $n=150$ these methods fail to find a global minimizer within reasonable time constraints, as their efficiency decreases with the number of variables. In contrast, our method exploits the structure of the objective function, linearizing the convex outer function and solving the separable subproblems globally, and scales well into higher dimensions.
We mention briefly that the apparent slow runtime of 'prox-linear' and 'linear-prox' for this test case is partly implementation related, as we  solve 'prox-linear' with a generic interior point solver in each iteration, but also because both algorithms converge to very flat local minima.
\begin{figure}[h]
	\centering
	\subfloat[]{\includegraphics[width=0.42\textwidth]{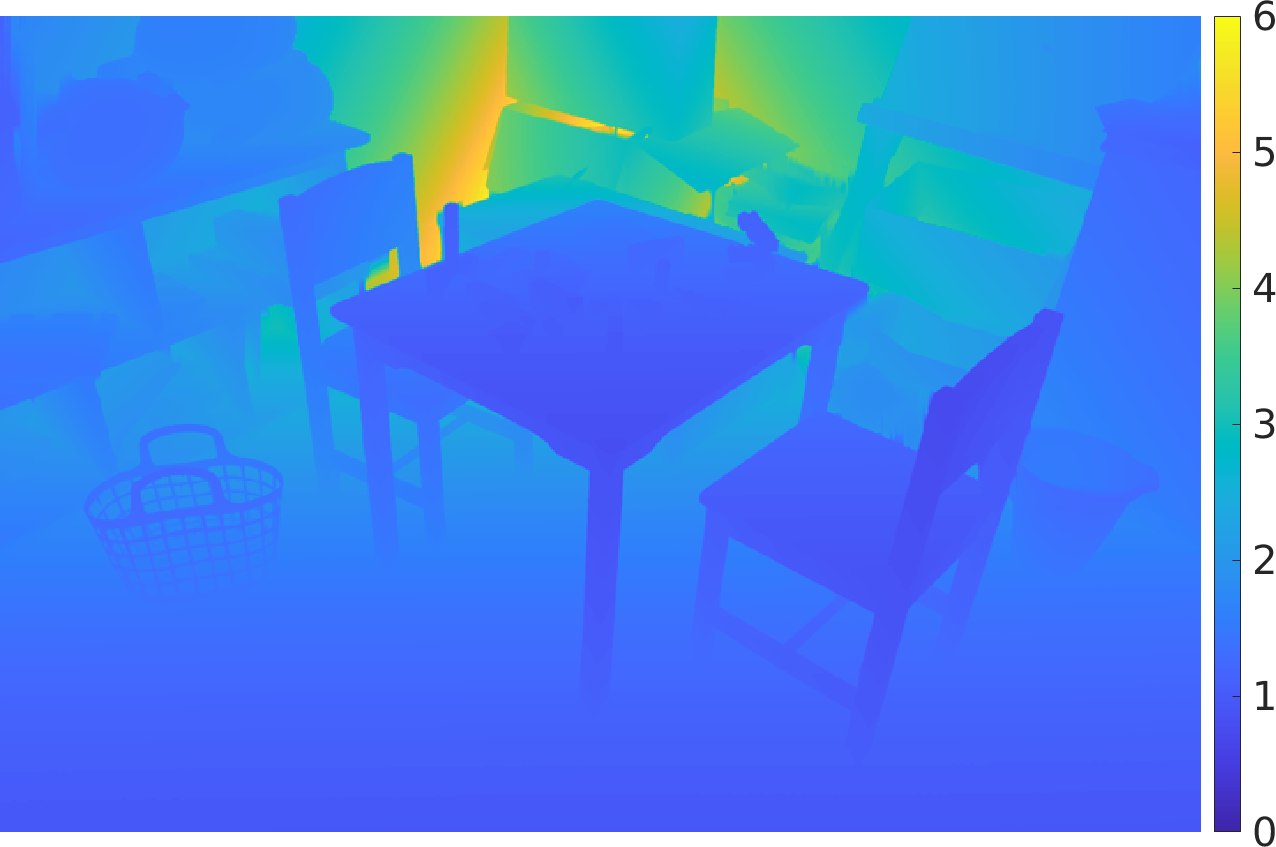}}
	\hspace{1pt}
	\subfloat[]{\includegraphics[width=0.49\textwidth]{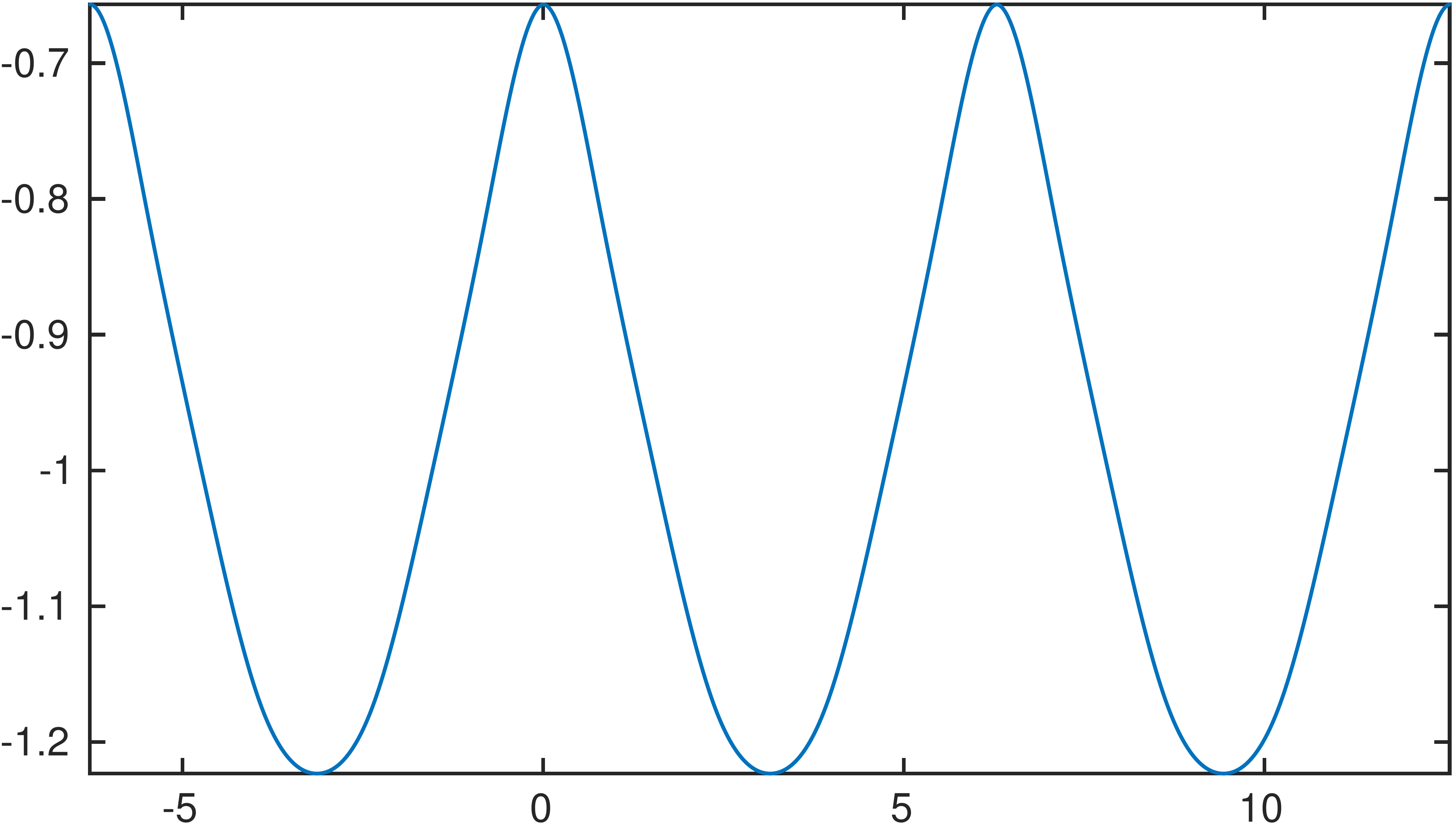}}
	\caption{Ground Truth depth data \cite{scharstein_high-resolution_2014} shown to the left and synthetic autocorrelation function generated from trapezoidal signal to the right. \label{fig:ground_truth}}
\end{figure}
\subsection{Time-of-Flight Depth Reconstruction}
Time-of-Flight cameras are used to recover depth images of a scene. They illuminate the scene with a continuous wave and measure the time of flight of reflecting waves. A modern hardware for this task are correlation photo-sensors, which directly measure the correlation of the incoming wave with a reference wave \cite{lange_3d_2000}. The inversion of this correlation computes the depth, however the process is highly non-linear and in practice often solved by  assuming the incoming waves to be purely sinosoidal and computing the analytical inversion at each pixel separately. This introduces several systematic errors into the depth measurements, especially at lower frequencies \cite{lindner_lateral_2006}. Further, these sensors are have a relatively low resolution due to their complexity and measurements contain a significant amount of noise.

A recent work on time-of-flight super-resolution \cite{xiao_defocus_2015} shows a variational model which includes the precise reference wave, downsampling, blur and noise effects. They model the incoming wave as arbitrary periodic function and find it by thorough calibration. 
In \cite{xiao_defocus_2015}, the resulting nonconvex energy model is solved by alternating local optimization in all variables. We will show that the problem can be solved with our approach and test on synthetic data.

\begin{figure}
	\includegraphics[width=0.24\textwidth]{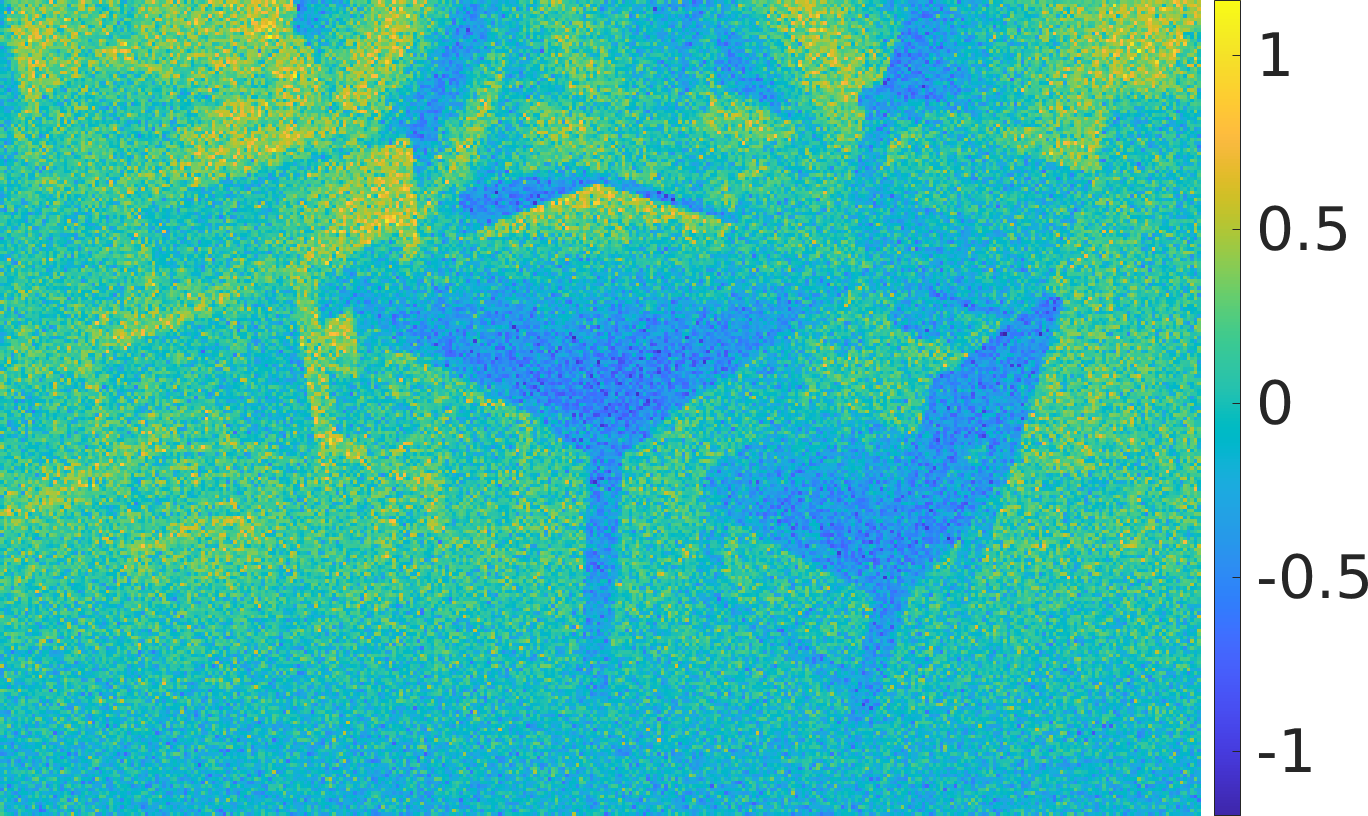}
	\includegraphics[width=0.24\textwidth]{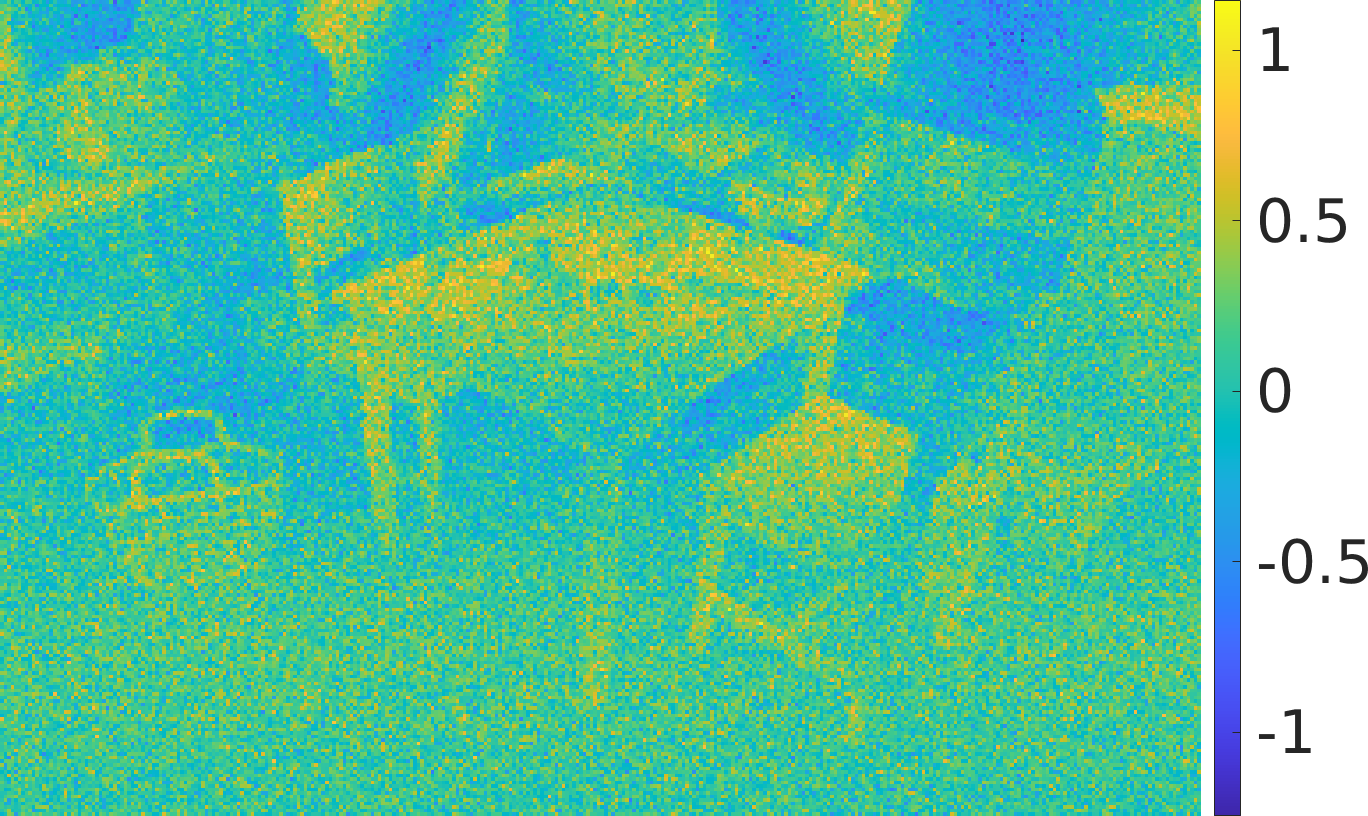} 
	\includegraphics[width=0.24\textwidth]{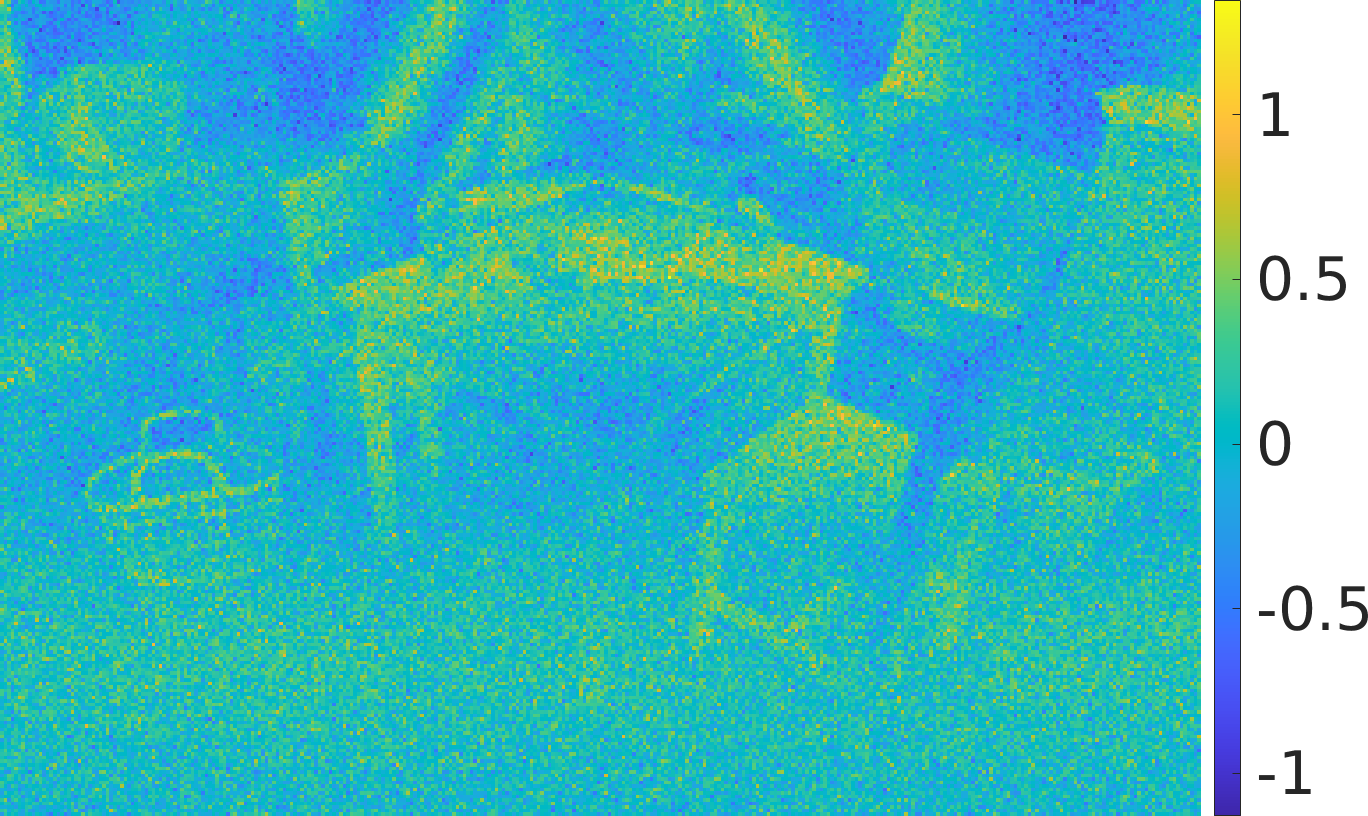}
	\includegraphics[width=0.24\textwidth]{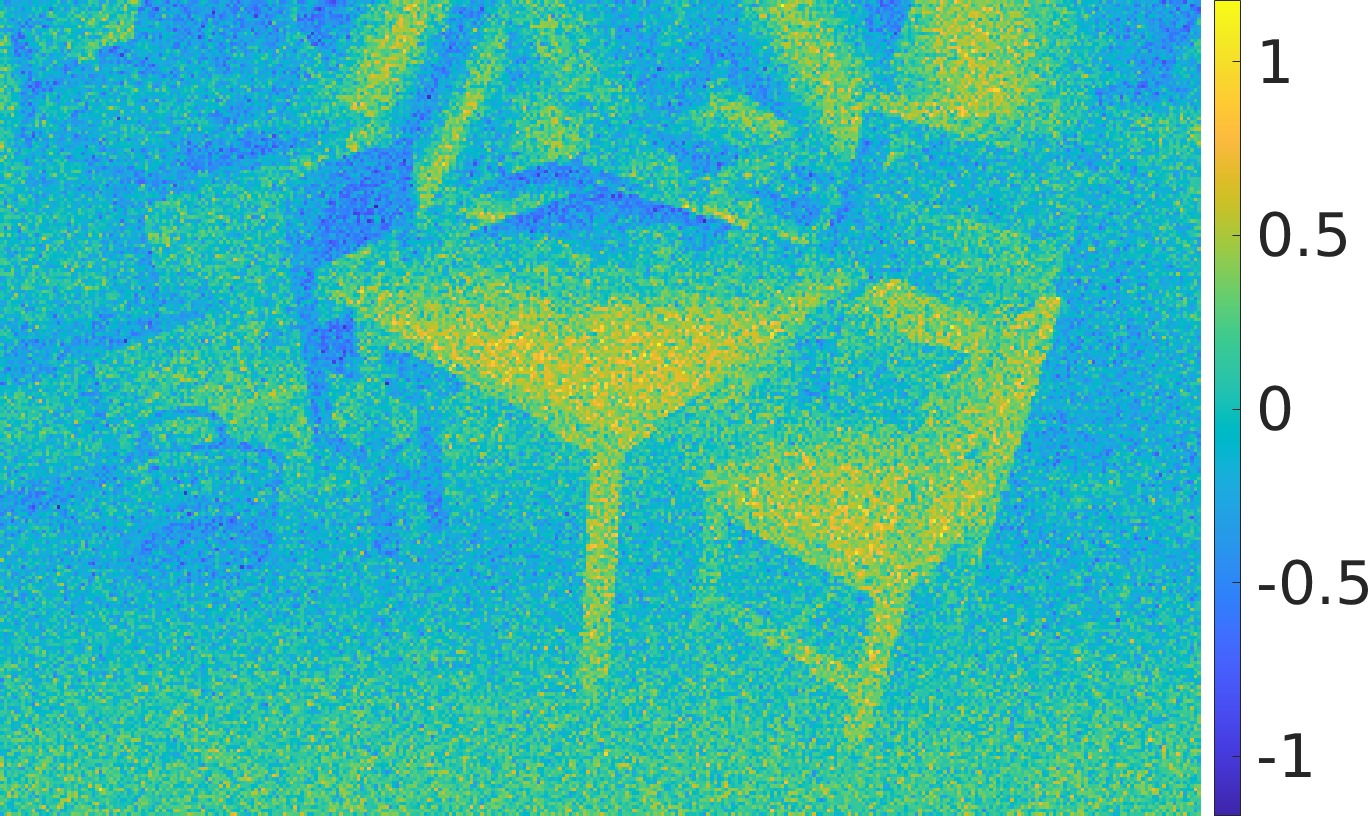}
	\caption{The four difference measurements $y_{ij}$, generated by equation \cref{eq:tof_data} with subsampling and Gaussian noise. \label{fig:data}}
\end{figure}

In the following, we will assume the following imaging model of a time-of-flight system
\begin{equation}
\tilde{y}_{ij} = a_i g_i\left( \frac{4\pi f_i}{\lambda}u + \frac{2\pi j}{n} \right) +b_i =: k_{ij}(u)  +b_i,
\end{equation}
for measurement $j$ in frequency $f_i$ and $g_i$ the $2\pi$-periodic autocorrelation in frequency $i$ that is either calibrated or otherwise known, e.g as a cosine. $a_i$ is the amplitude in frequency $f_i$, $n$ the number of measurements in each frequency $f_i$ and $\lambda$ the speed of light.
To remove the background illumination $b_i$ it is customary in Time-of-Flight literature to consider the difference measurements
\begin{equation}\label{eq:tof_data}
y_{ij} = k_{ij}(u) - k_{i,j+\frac{n}{2}}(u) =: \rho_{ij}(u).
\end{equation}
The problem of recovering a high-resolution depth image $u$ from measurements $y_{ij}$ can now be stated as the energy minimization of 
\begin{equation}\label{eq:tof_energy}
E(u) = \sum_{i=0}^{m-1} \sum_{j=0}^{n/2-1} ||y_{ij}-K\rho_{ij}(u)||^2 + \alpha ||\nabla u||,
\end{equation}
see also \cite{xiao_defocus_2015}. $K$ is the imaging operator, which is here a downsampling operator. The total variation regularization encourages a piecewise-constant depth solution. 

The energy can be solved with the proposed method, because we can identify \cref{eq:tof_energy} with the previously introduced special case of a sum of several composite terms \cref{eq:gen_problem} and a total variation regularization \cref{eq:gen_tv} - we can find a majorizer in each iteration that can be solved by functional lifting.

In practice we solve all subproblems with sub-label accurate lifting as described in \cite{mollenhoff_sublabel-accurate_2016}. 
We initialize the algorithm with a constant depth of 1m and then iteratively update the nonconvex majorizer and solve the sublabel-accurate lifting problem. We use a primal-dual algorithm \cite{chambolle_first-order_2011} to solve our subproblems and 'warm start'  each inner iteration with the primal-dual variables from the previous step. 
We note that this relaxation approach can possibly produce solutions that are convex combinations of global minimizers. To mitigate this problem, we monitor the energy of our inner iterations and terminate the algorithm if lifting cannot successfully minimize the majorizer, i.e. if any iterates $u^{k+1}$ were to violate $E_{u^k}(u^{k+1}) \leq E(u^k)$, which was postulated in Assumption A. However such a violation could not be detected for the Time-of-Flight experiment shown here.

To test this procedure experimentally we generate synthetic data from a depth image of the Middlebury dataset \cite{scharstein_high-resolution_2014} by applying \cref{eq:tof_data}. As a model for $g_i$ we use the autocorrelation of a trapezoid signal. The autocorrelation signal and the ground truth depth are shown in \cref{fig:ground_truth}. We then apply downsampling by a factor of 2 to model the limited sensor size and add significant Gaussian noise to model the sensitivity of common ToF sensors.
We generate two difference measurements in two frequencies, 90 MHz and 120 MHz. The ground truth data covers a depth ranging from 0.5 to 6m. The resulting measurements are outside the unambiguous range of both frequencies, so we expect a wrapping of data, which we want to resolve using both frequencies. We visualize the resulting four difference measurements in \cref{fig:data}. The noise level and severe data wrapping are apparent. 

A classical inversion of the given data by the nonlinear closed form solution for sinusoidal data \cite{lange_3d_2000} is shown for each frequency in \cref{fig:closed_form}. The solution is however contaminated by the nonlinear effects of noise and severe wrapping, note that we adjusted the colors for visualization purposes. Further heuristics would be required in a next step to combine both solutions to a final result, but we omit these due to the already significant distortion.
\begin{figure}
		\centering
		\subfloat[]{\includegraphics[width=0.49\textwidth]{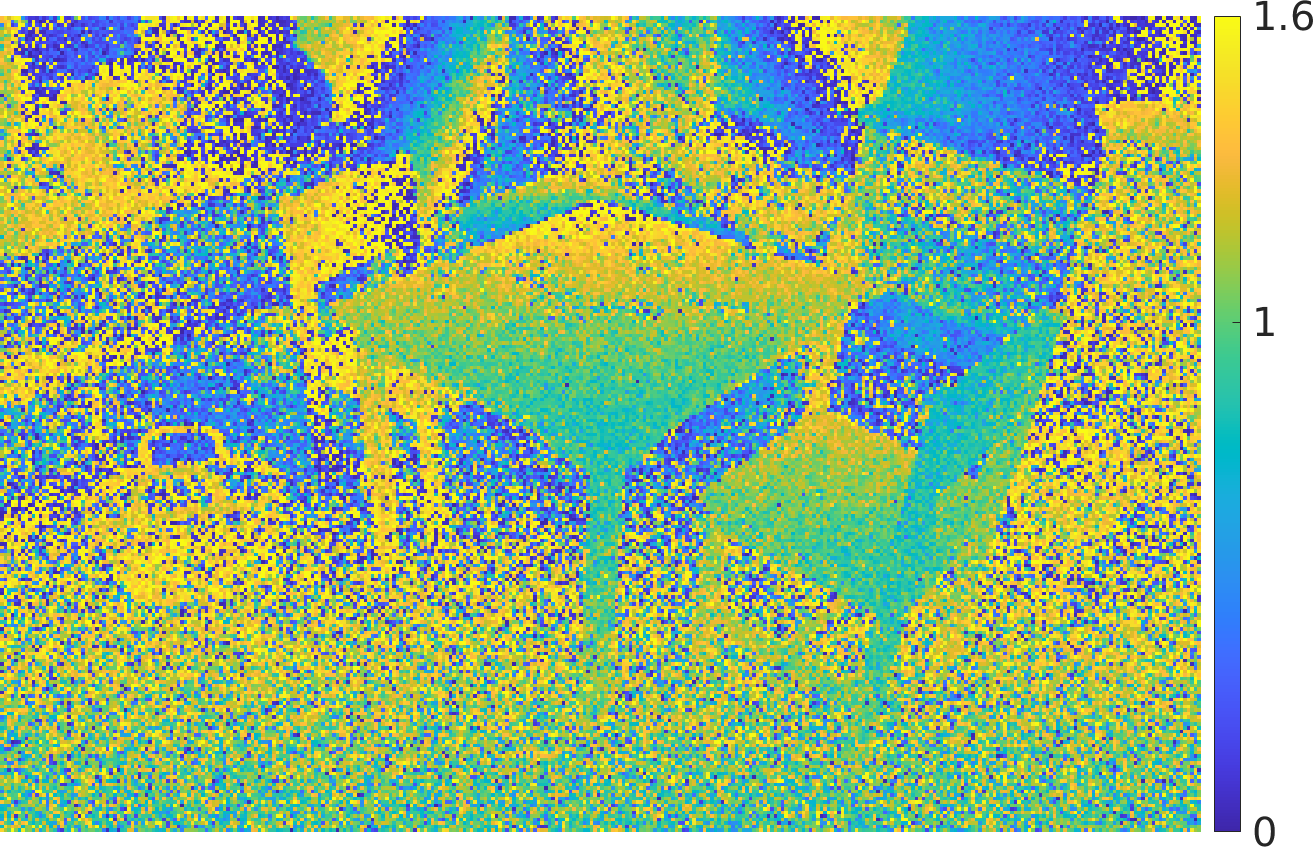}}
		\hspace{1pt}
		\subfloat[]{\includegraphics[width=0.49\textwidth]{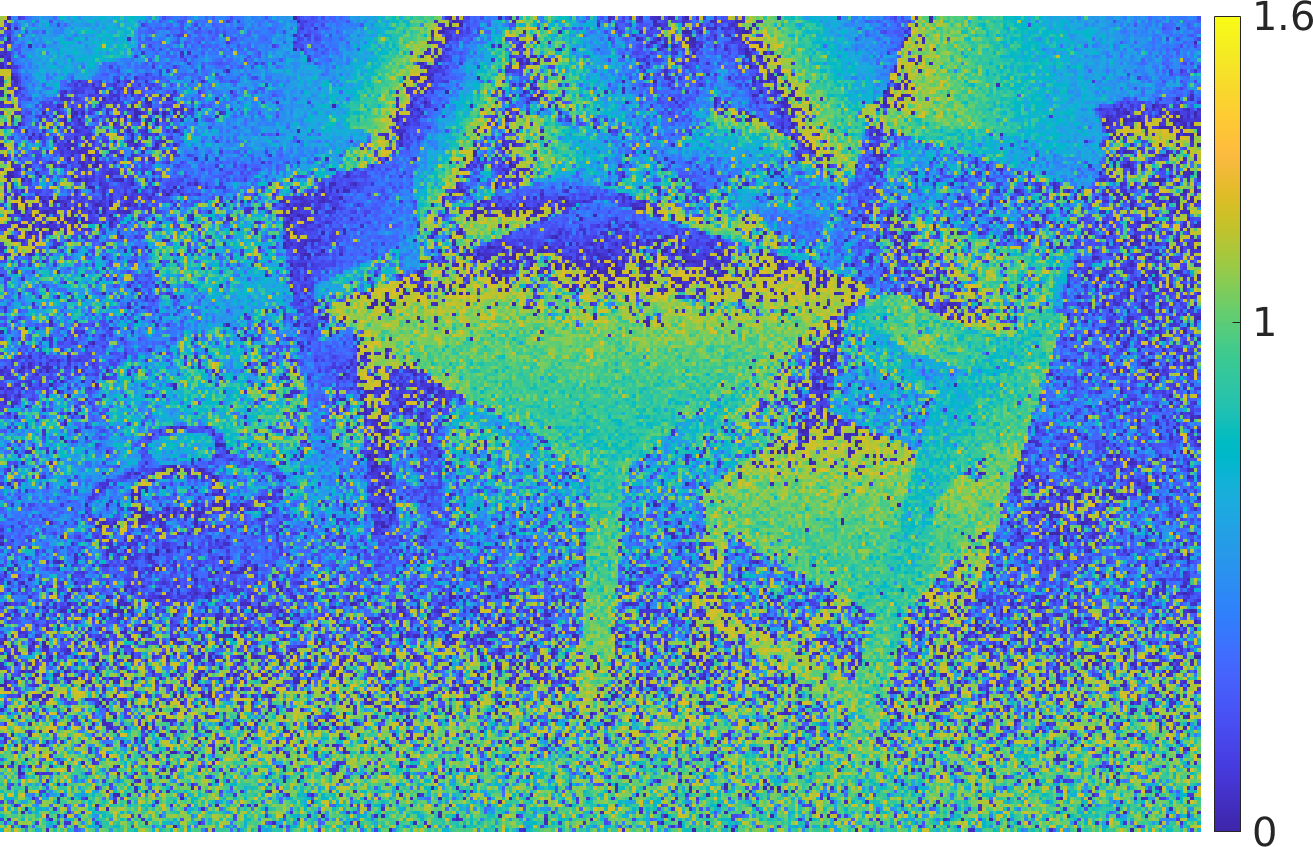}}
	\caption{Classical closed form solution to depth recovery \cite{lange_3d_2000}. 90 MHz data to the left and 120 MHz to the right. \label{fig:closed_form}}
\end{figure}
\begin{figure}
		\centering
		\subfloat[]{\label{fig:solution:a}\includegraphics[width=0.49\textwidth]{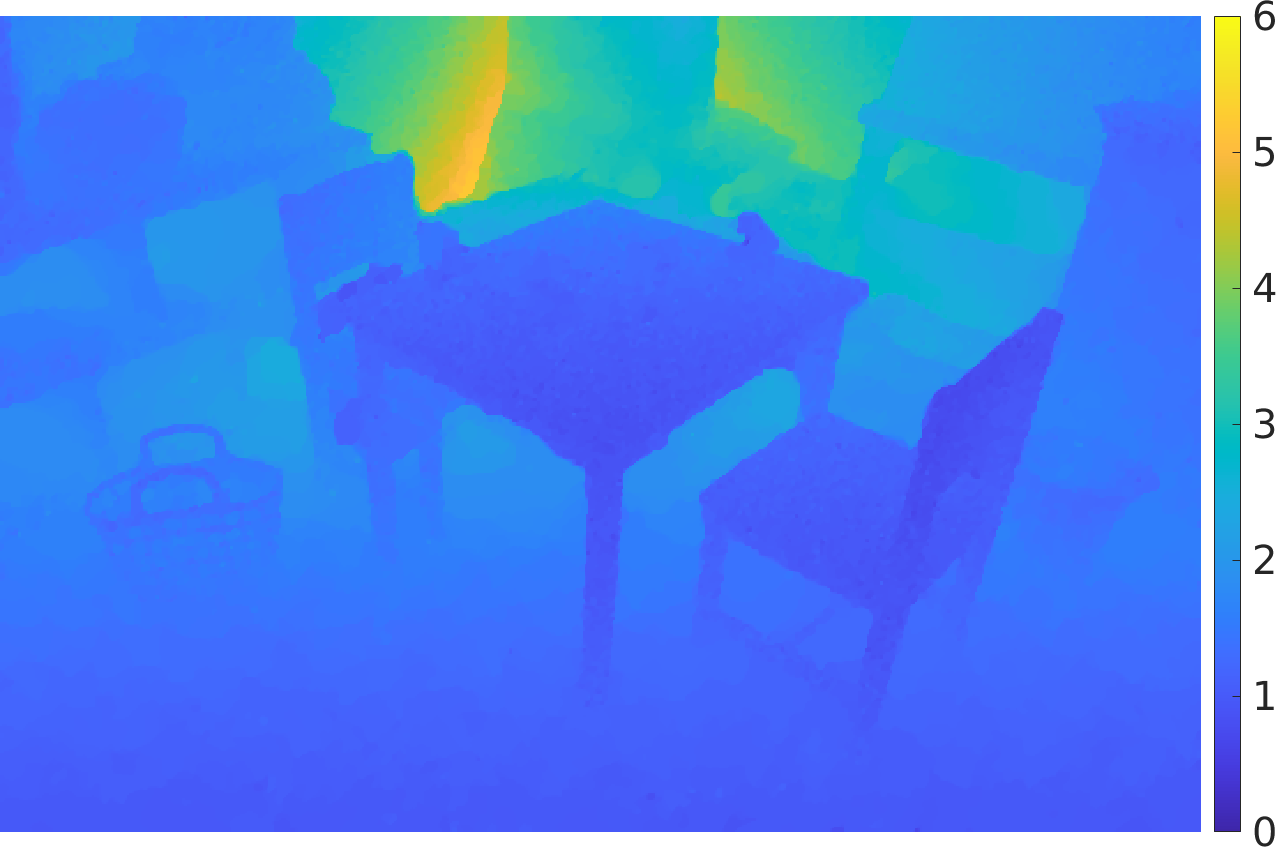}}
		\hspace{1pt}
		\subfloat[]{\label{fig:solution:b}\includegraphics[width=0.49\textwidth]{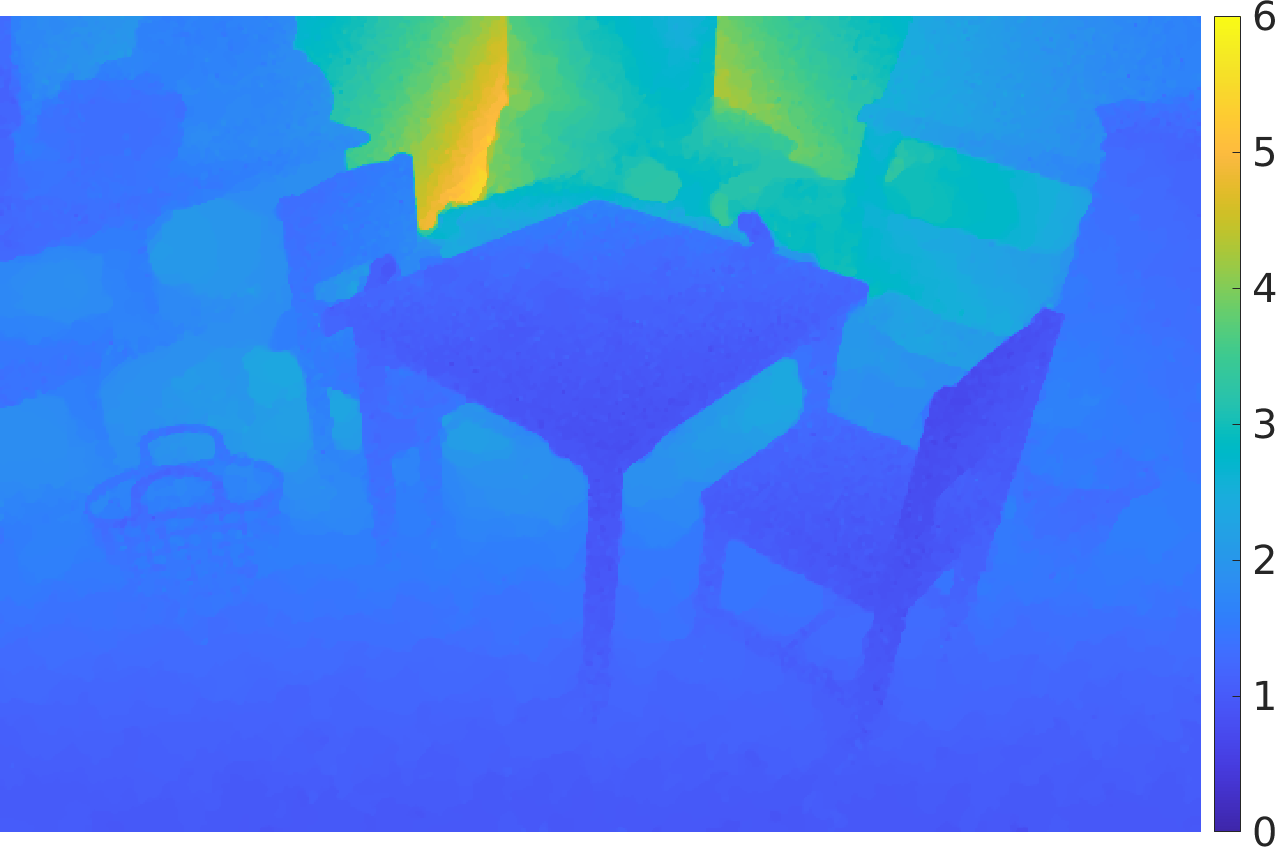}}
	\caption{Solution by the proposed algorithm with lifted subproblems, upsampling factor of 2 to the left. To the right, proposed algorithm initialized with the ground truth, \cref{fig:ground_truth}, for reference. \label{fig:solution}}
\end{figure}

In contrast the solution by our algorithm is shown in \cref{fig:solution:a}. For reference, the solution to the algorithm, initialized with the ground truth data is also visualized in \cref{fig:solution:b}. We see that the recovered solution is near-optimal. The algorithm can accurately unwrap and upsample most of the depth data and only small areas around the left chair are misidentified.  
\begin{remark}
	The presented model assumes the knowledge of the signal amplitude $a_i$ in each frequency by some preceding algorithm to streamline the presentation. If this information cannot be obtained robustly in practice, then the problem still falls into the problem category discussed in this paper, only the optimization variable $v = [u,a]$ is then vector-valued in depth and amplitude at each pixel (see the maximal generalization discussion in \cref{sec:modeling}). Yet, vector-valued variables can still be accounted for efficiently via the vectorial lifting shown in the works \cite{strekalovskiy_convex_2014,laude_sublabel-accurate_2016-1}. The overall algorithm remains unchanged, only the subproblems are solved by vector-valued lifting instead.
\end{remark}
We close this section by mentioning briefly that the presented composition of a matrix and a nonlinear wrapping operator is not entirely unique to Time-of-Flight reconstruction. A very related energy is present in nonlinear MRI reconstruction, see \cite{valkonen_primaldual_2014} for further reading.

\section{Conclusions}
\label{sec:conclusions}

In conclusion we proposed an optimization strategy for composite problems with simple, but highly-nonlinear, inner functions and L-smooth adaptable outer functions. We construct nonconvex majorizing functions and show 
that an iterative minimization of these functions leads to the convergence of energy values under weak assumptions as well as the convergence of the iterates to critical points of the energy under more restrictive assumptions.
Our approach has several attractive properties. 
It generates a set of feasible iterates and it is very easy to use large step-sizes analytically, as these are independent of the Lipschitz properties of the inner function. Our convergence results naturally extend previous work. 
In practice, we extensively analyze the algorithm on synthetic examples, where the sub-problems can be solved globally and show that it can find better minima than related methods. Lastly we show an intended application. The use of recent functional lifting techniques to solve the nonconvex majorizer, critically allows us to find visually appealing solutions to the complicated composite problem of time-of-flight reconstruction from noisy low resolution data.

\subsection*{Acknowledgements}
The authors acknowledge the support of the German Research Foundation (DFG) via the research training group GRK 1564 Imaging New Modalities and the project "Functional Lifting 2.0", as well as the support of the Daimler and Benz Foundation.

\bibliographystyle{plain}

\end{document}